\title[double cone harmonic archipelago]{The double cone group is isomorphic to the archipelago group}
\author{Samuel M. Corson}
\theoremstyle{definition}\newtheorem{theorem}{Theorem}
\theoremstyle{definition}
\theoremstyle{definition}
\theoremstyle{definition}\newtheorem{bigtheorem}{Theorem}
\numberwithin{theorem}{section}
\theoremstyle{definition}
\theoremstyle{definition}
\theoremstyle{definition}\newtheorem{definition}[theorem]{Definition}
\theoremstyle{definition}
\theoremstyle{definition}
\theoremstyle{definition}\newtheorem{remark}[theorem]{Remark}
\theoremstyle{definition}
\theoremstyle{definition}\newtheorem{lemma}[theorem]{Lemma}
\theoremstyle{definition}
\theoremstyle{definition}
\theoremstyle{definition}\newtheorem{claim}[theorem]{Claim}
\theoremstyle{definition}
\newcommand{\Red}{\operatorname{Red}}
\newcommand{\GS}{\mathbb{G}\mathbb{S}}
\newcommand{\Aut}{\operatorname{Aut}}
\newcommand{\HA}{\mathbb{H}\mathbb{A}}
\newcommand{\W}{\mathcal{W}}
\newcommand{\pchunk}{\operatorname{p-chunk}}
\newcommand{\Pfine}{\operatorname{Pfine}}
\newcommand{\pindex}{\operatorname{p^*}}
\newcommand{\Close}{\operatorname{Close}}
\newcommand{\coi}{\operatorname{coi}}
\newcommand{\dom}{\operatorname{dom}}
\newcommand{\ran}{\operatorname{ran}}
\newcommand{\Pure}{\operatorname{Pure}}
\newcommand{\se}{\operatorname{se}}
\newcommand{\pa}{\operatorname{par}}
\def\pmc#1{\setbox0=\hbox{#1}
    \kern-.1em\copy0\kern-\wd0
    \kern.1em\copy0\kern-\wd0}
\DeclareMathOperator{\topprod}{\circledast}
\begin{document}

\address{E. T. S. I. I. Universidad Polit\'{e}cnica de Madrid, Jos\'{e} Guti\'{e}rrez Abascal 2, 28006 Madrid, Spain}
\email{sammyc973@gmail.com}

\keywords{fundamental group, Griffiths space, harmonic archipelago, Hawaiian earring, topological cone}
\subjclass[2020]{Primary 03E75, 20A15, 55Q52; Secondary 20F10, 20F34}
\thanks{The author is partially supported by RYC2023-045493-I.}

\begin{abstract}  We prove the conjecture of James W. Cannon and Gregory R. Conner that the fundamental group of the Griffiths double cone space is isomorphic to that of the harmonic archipelago.  From this and earlier work in this area, we conclude that the isomorphism class of these groups is quite large and includes groups with a great variety of descriptions.

\end{abstract}

\maketitle

\begin{section}{Introduction}

The harmonic archipelago $\HA$ was introduced by Bogley and Sieradski \cite{BS} as a space whose algebraic topology exhibits exotic properties.  The description of $\HA$ is rather innocent.  One takes the closed unit disk in the plane and pushes up countably infinitely many hills of height one, with the bases of the hills shrinking in diameter and converging towards a single point $o$ on the boundary (Figure \ref{harmonicarchipelagofig}).  All of the special behavior of $\HA$ is accumulated at the special point $o$; indeed, $\HA \setminus \{o\}$ is homeomorphic to a disk with a boundary point removed.

The fundamental group $\pi_1(\HA)$ is nontrivial.  For example, the loop which passes clockwise around the ``boundary'' of $\HA$ is nontrivial in $\pi_1(\HA)$ (any purported nulhomotopy would need to pass over infinitely many hills, violating the countinuity of the nulhomotopy).  This may seem initially surprising since any loop in $\HA$ can be homotoped to lie within an arbitrary neighborhood of $o$.  In fact $\pi_1(\HA)$ is uncountable and a combinatorial description of this group involving infinite words is well-known (see Lemma \ref{HAcombinatorial}).  The group $\pi_1(\HA)$ is locally free and every countable locally free group is included as a subgroup (for example the group $\mathbb{Q}$ of rationals or the fundamental group of the complement of the Alexander Horned Sphere) \cite{Hojka}. Any homomorphism from $\pi_1(\HA)$ to $\mathbb{Z}$ is trivial.

Many of the unusual properties of the fundamental group of $\HA$ are shared by that of its longer-studied relative, the Griffiths space $\GS$ (Figure \ref{doubleconefig}).  Introduced by H. B. Griffiths \cite{G}, this space has been a mainstay for wild topologists and used recently in studying notions of infinitary abelianization \cite{BG} and non-abelian cotorsion \cite{EF}.  Combinatorial descriptions of the group $\pi_1(\GS)$ have been of interest in their own right \cite{BZ}.  In common with $\HA$, the space $\GS$ has uncountable fundamental group and has a point arbitrarily close to which any loop can be homotoped.  Perhaps it was these similarities which lead James W. Cannon and Gregory R. Conner to boldly conjecture that $\pi_1(\HA)$ is isomorphic to $\pi_1(\GS)$ \cite{C2}.

For evidence in favor of this conjecture we know that $(\prod_{\omega}\mathbb{Z})/(\sum_{\omega}\mathbb{Z})$ is isomorphic to the abelianization of $\pi_1(\HA)$ \cite[Theorem 1.2]{KR} and is also isomorphic to the abelianization of $\pi_1(\GS)$ \cite[Theorem 1.6]{EF}.  The evident isomorphism between the abelianization of $\pi_1(\HA)$ and that of $\pi_1(\GS)$ is quite abstract and utilizes deep results in abelian group theory and extensive use of the axiom of choice.  Moreover, it is intuitively clear that there is no continuous function from $\HA$ to $\GS$, or vice-versa, which will induce an isomorphism of fundamental groups.  This gives a sense of the difficulty and nonconstructive nature of our proof of the conjecture of Cannon and Conner.

\begin{figure}
\includegraphics[height = 5cm]{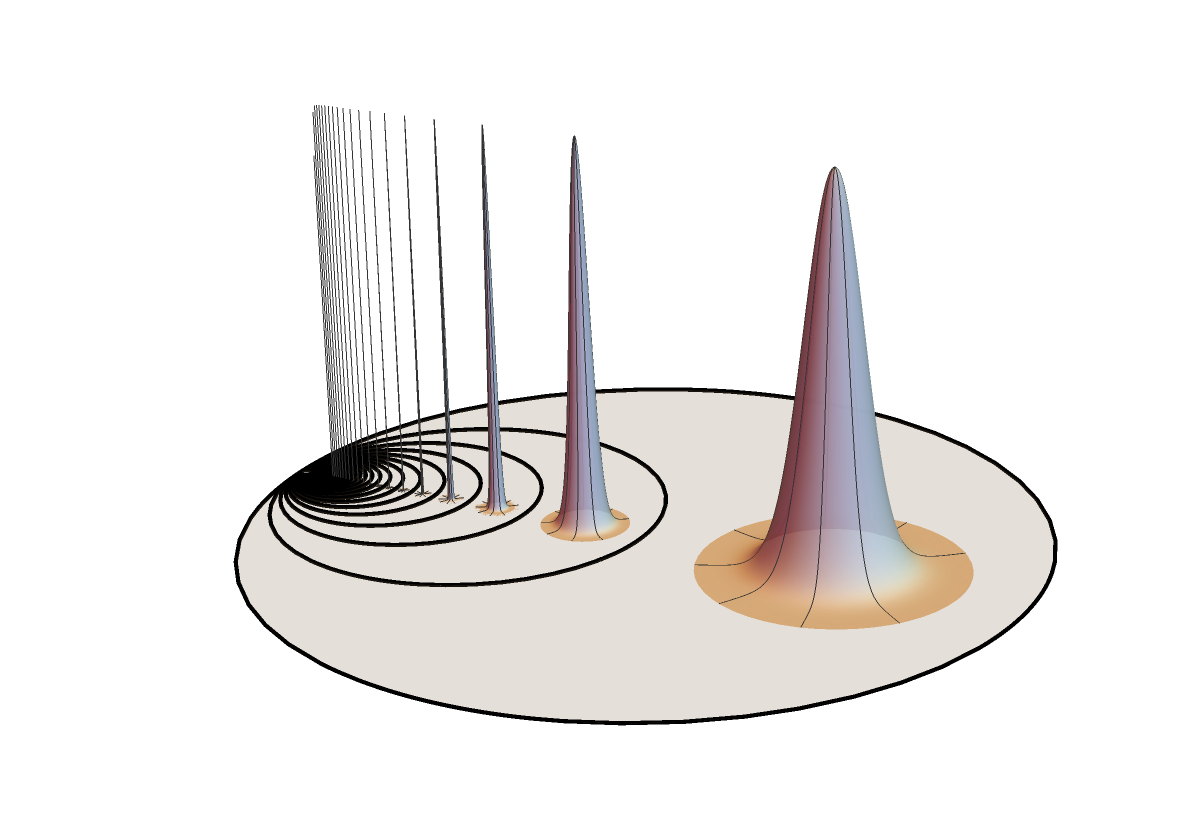}
\caption{The harmonic archipelago $\HA$}
\label{harmonicarchipelagofig}
\end{figure}

\begin{bigtheorem} \label{themain}  The group $\pi_1(\GS)$ is isomorphic to $\pi_1(\HA)$.
\end{bigtheorem}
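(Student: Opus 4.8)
The plan is to recast the topological statement as a purely combinatorial one about groups of infinite words, and then to produce the isomorphism by a transfinite recursion on the layered structure of those words; since (as the introduction notes) no continuous map can induce it, the construction is unavoidably nonconstructive. First I would fix combinatorial models of the two groups. On the archipelago side, Lemma~\ref{HAcombinatorial} supplies one: $\pi_1(\HA)$ may be presented as $\pi_1(\mathbb{H})/\langle\langle x_1,x_2,\dots\rangle\rangle$, equivalently as the group of reduced $\mathbb{H}$-words taken modulo the congruence that permits erasing any \emph{finite} subword (each hill being a disk, so every $x_n$, and hence every finite product of them, dies, while infinite products such as the boundary loop do not). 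On the double-cone side I would record the parallel description: writing $\GS=C\mathbb{H}\vee_{p}C\mathbb{H}$ for the two cones glued at the wild point $p$ and analysing loops near $p$ with the infinitary Seifert--van Kampen apparatus for such double cones, $\pi_1(\GS)$ is presented as a quotient of the doubled earring group $\pi_1(\mathbb{H})\topprod\pi_1(\mathbb{H})$ --- which is again a Hawaiian earring group, its circles merely $2$-coloured according to which cone kills them, with letters $a_n^{\pm1}$ of the first colour and $b_n^{\pm1}$ of the second --- modulo the congruence that permits erasing any monochromatic subword whose index set clusters at $p$; these are exactly the subwords that bound a disk inside one of the cones, and, crucially, they include \emph{infinite} monochromatic blocks. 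The decisive contrast --- the root of the difficulty --- is between the two congruences: in $\HA$ one may erase only finite blocks, whereas in $\GS$, at the cost of a $2$-colouring, one may also erase infinite monochromatic blocks.

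Next I would introduce a single master group $\W$ of reduced infinite words over an alphabet large enough to host both pictures, carrying a well-founded tree of ``blocks'' that records which subwords are to be treated as erasable, together with canonical surjections $\W\twoheadrightarrow\pi_1(\GS)$ and $\W\twoheadrightarrow\pi_1(\HA)$; the theorem then becomes the assertion that, after a suitable relabelling of the alphabet, the two surjections have the same kernel. Rather than exhibit a formula for the isomorphism $\Phi\colon\pi_1(\GS)\to\pi_1(\HA)$, I would build it by recursion on a rank function on words: given a $\GS$-word in reduced form, single out a maximal monochromatic cone-confined block, record how it sits, recurse into the residual word (of strictly smaller rank), and reassemble --- translating, level by level, the ``erase an infinite monochromatic block'' moves of $\GS$ into ``erase finite blocks'' moves of $\HA$, with the surplus colour on the $\GS$ side absorbed along the way. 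The many selections required are made by the axiom of choice; surjectivity is then obtained by running the recursion in reverse. Since the two cones in the first step are largely irrelevant to the kernel that results, the same scheme should simultaneously give Theorem~\ref{largeisomorphismclass}, that a broad family of such quotient groups all coincide.

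The main obstacle, and the point where the axiom of choice is genuinely needed, is to prove that $\Phi$ is at once well defined on congruence classes, multiplicative, and injective. One must show that an infinite cascade of cone-confined deletions on the $\GS$ side is mirrored by an infinite cascade of finite-block deletions on the $\HA$ side, so the recursion has to be engineered to commute with the limits of partial reductions; that $\Phi$ is independent of where a word is severed, so that concatenation is respected; and that a $\GS$-word possessing no monochromatic cone-confined block is carried to a genuinely reduced $\HA$-word, so that nothing nontrivial is killed. Each of these is a statement about how infinitary reduction interacts with the recursion, and settling them amounts to proving a normal-form theorem for $\W$ in the spirit of the established theory of $\mathbb{H}$-words and unrestricted free products --- which is where essentially all of the work lies.
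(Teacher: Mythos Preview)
Your combinatorial setup is right and matches the paper's Lemmas \ref{GScombinatorial} and \ref{HAcombinatorial}: both groups are quotients of an earring group by a normal closure of ``pure'' subwords, with the crucial asymmetry being that on the $\HA$ side only finite pure subwords are killed, while on the $\GS$ side infinite monochromatic blocks die as well. You also correctly anticipate that the construction must be highly nonconstructive and that the real work is in compatibility with reduction and concatenation.

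The gap is in the proposed mechanism. You suggest defining $\Phi$ on an individual word $W$ by a recursion on a rank: peel off a maximal monochromatic block, recurse into a residual word of ``strictly smaller rank,'' and reassemble. But no such well-founded rank is available. The pure decomposition of $W\in\Red_{a,b}$ is indexed by an arbitrary countable linear order $p^*(W)$ --- it can be order-isomorphic to $\mathbb{Q}$ --- so removing one pure block does not lower any ordinal-valued complexity, and the ``residual word'' is not simpler in any sense that supports induction. Moreover, a word-by-word definition gives you no leverage on multiplicativity: when you concatenate $W_0$ and $W_1$, cancellation can consume arbitrarily deep p-chunks of each, and nothing in a rank recursion on $W_0$ and $W_1$ separately tells you that $\Phi(W_0)\Phi(W_1)$ and $\Phi(\Red(W_0W_1))$ agree modulo finite words. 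Your sketch acknowledges these as obstacles but does not supply a device to overcome them.

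The paper's approach is structurally different: it is not a recursion on words but a back-and-forth construction, of length $\mathfrak{c}$, of a \emph{coherent collection} of triples $\coi(W_x,\iota_x,U_x)$, where each $\iota_x$ is a close-order-isomorphism between the p-index of $W_x$ and that of $U_x$ (an order isomorphism between close, i.e.\ cofinite-in-every-interval, subsets). Coherence is a global compatibility condition saying that whenever a p-chunk of some $W_x$ matches a p-chunk of another $W_{x'}$, the corresponding p-chunks on the $U$ side agree modulo $\Pure_c$, and symmetrically. This single invariant is what simultaneously guarantees well-definedness, the homomorphism property, and bijectivity (Lemma \ref{homomorphism}). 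The inductive step is then: given a new $W$ (or $U$), extend the coherent collection to include it. This is where the genuine difficulty lives, and the paper handles it by reducing to two model cases --- discrete $\omega$-type concatenations (Lemma \ref{typeomega}) and dense $\mathbb{Q}$-type concatenations (Lemma \ref{typeQ}) --- with a countable-ordinal iteration (Lemma \ref{arbextension}) to reach the general case. The diagonalisation arguments in those lemmas, which use $|X|<2^{\aleph_0}$ to choose buffer words $V_n$ avoiding all existing p-chunk patterns, are exactly the place where the ``many selections by the axiom of choice'' you mention are made precise; without an analogue of this machinery your outline does not yet contain a proof.
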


Theorem \ref{themain} serves as a linchpin allowing us to write a more general statement, towards whose formulation we give some definitions.  Using the notation of \cite{Cor2} we let $\GS_{\kappa}$ denote the metric wedge of $\kappa$-many cones over the infinite earring, where $\kappa$ is a cardinal.  Thus $\GS_1$ is the cone over the infinite earring (hence contractible) and $\GS_2 = \GS$.  When $\kappa$ is in the set $\omega$ of natural numbers, $\GS_{\kappa}$ is a \emph{Peano continuum}  (a nonempty compact metrizable path connected, locally path connected space).

Now for some groups of a different, directly combinatorial, description.  If $\{H_n\}_{n \in \omega}$ is a collection of groups we have an inverse system of homomorphisms $*_{n = 0}^{k + 1} H_n \rightarrow *_{n = 0}^k H_n$ given by deleting the factor $H_{k + 1}$.  Define the topologist product $\topprod_{n \in \omega} H_n$ to be the subset of the inverse limit $\varprojlim *_{n = 0}^k H_n$ consisting of those (often infinite) words $W \in \varprojlim *_{n = 0}^k H_n$ having only finitely many appearances of elements of $H_j$ for each $j \in \omega$.  This subset is in fact a subgroup and satisfies $$*_{n \in \omega} H_n \leq \topprod_{n \in \omega} H_n \leq \varprojlim *_{n = 0}^k H_n.$$  When $H_n \simeq \mathbb{Z}$ for each $n$ the topologist product is isomorphic to the fundamental group of the infinite earring (see Section \ref{Background}).  We define a quotient $\mathcal{A}(\{H_n\}_{n \in \omega})$ by allowing the deletion of finite words, i.e. $$\mathcal{A}(\{H_n\}_{n \in \omega}) = \topprod_{n \in \omega} H_n/\langle\langle *_{n \in \omega} H_n \rangle\rangle.$$  When $H_n \simeq \mathbb{Z}$ for each $n$ the group $\mathcal{A}(\{H_n\}_{n \in \omega})$ is isomorphic to $\pi_1(\HA)$ (via a natural combinatorial function).  The group $\mathcal{A}(\{H_n\}_{n \in \omega})$ is locally free \cite[Theorem 11]{HH}, and assuming $1 < |H_n| \leq 2^{\aleph_0}$ the abelianization is isomorphic to $(\prod_{\omega}\mathbb{Z})/(\sum_{\omega}\mathbb{Z})$  \cite[Theorem 8]{HH}.

Now for the general statement.

\begin{bigtheorem} \label{largeisomorphismclass}
All groups in the following list are in the same isomorphism class.

\begin{enumerate}

\item $\pi_1(\GS_{\kappa})$ where $2 \leq \kappa \leq 2^{\aleph_0}$;

\item $\pi_1(\HA)$;

\item $\mathcal{A}(\{H_n\}_{n \in \omega})$ where $\{H_n\}_{n \in \omega}$ is a sequence of groups without involutions with $1 < |H_n| \leq 2^{\aleph_0}$.

\end{enumerate}
\end{bigtheorem}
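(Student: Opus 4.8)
The plan is to make Theorem~\ref{themain} the hub of the entire isomorphism class. Theorem~\ref{themain} gives $\pi_1(\GS_2)=\pi_1(\GS)\cong\pi_1(\HA)$, and the combinatorial identification recalled above gives $\pi_1(\HA)\cong\mathcal{A}(\{\mathbb{Z}\}_{n\in\omega})$; so these three groups are already known to be mutually isomorphic. It therefore remains to prove two statements: (a) $\pi_1(\GS_\kappa)\cong\pi_1(\GS_2)$ for every cardinal $\kappa$ with $2\le\kappa\le2^{\aleph_0}$; and (b) $\mathcal{A}(\{H_n\}_{n\in\omega})\cong\mathcal{A}(\{\mathbb{Z}\}_{n\in\omega})$ whenever each $H_n$ is involution-free with $1<|H_n|\le2^{\aleph_0}$. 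I would derive both from one \emph{recognition criterion}, proved by redeploying the methods behind Theorem~\ref{themain}: an ``archipelago-type'' group --- a quotient of a topologist product of countably many involution-free, suitably rich groups by the normal closure of a sufficiently large, earring-respecting family of subgroups (in particular, by the normal closure of its finite words) --- is isomorphic to $\pi_1(\HA)$ once the ambient cardinality is pinned at $2^{\aleph_0}$. Setting up and proving this criterion is the main obstacle; granting it, (a) and (b) are bookkeeping.

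For (b): the group $\mathcal{A}(\{H_n\}_{n\in\omega})=\topprod_{n\in\omega}H_n/\langle\langle *_{n\in\omega}H_n\rangle\rangle$ is visibly of the target form, so one only verifies the hypotheses. The bound $|H_n|\le2^{\aleph_0}$ keeps the whole group of size $2^{\aleph_0}$, matching $|\pi_1(\HA)|$. The conditions $1<|H_n|$ and ``no involution'' are exactly what make the coefficient groups rich enough: together they force each finite free product $*_{i\le k}H_i$ to contain a nonabelian free subgroup, which is the room the argument needs --- whereas if every $H_n\cong\mathbb{Z}/2$ the finite free products are only virtually cyclic and the construction degenerates, consistent with such sequences being barred from the statement. (One may first reduce, through the functoriality of $\mathcal{A}(-)$ and of $\topprod$ together with regrouping and relabelling identities, to cyclic coefficient groups of infinite or odd prime order, but the criterion applies without this reduction.)

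For (a): since the cone $C\mathbb{E}$ over the infinite earring $\mathbb{E}$ is contractible, each cone of the metric wedge $\GS_\kappa$ annihilates the entire fundamental group of its earring; collecting the constituent circles and applying an infinitary Seifert--van Kampen theorem presents $\pi_1(\GS_\kappa)$ as the quotient of the fundamental group of an earring-type wedge of those circles by the normal closure of the $\kappa$ earring-shaped sub-factors carried by the cones --- again of the form covered by the criterion (this description is essentially that of \cite{Cor2}). For finite $\kappa\ge2$ the criterion applies at once, and it is genuinely needed: the naive idea of ``merging'' the extra cones into one is \emph{not} legitimate, since coning off an earring interacts with wedging in precisely the subtle manner that gives the Griffiths space its character, so that the isomorphism of the combinatorial models for $\GS_\kappa$ and $\GS_2$ requires the criterion rather than any soft move. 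For infinite $\kappa$ there are two further checks. First, a compact loop in $\GS_\kappa$ meets only countably many circles (its image is separable, hence meets only countably many of the pairwise disjoint open punctured circles), so $\pi_1(\GS_\kappa)$ is still controlled by countably indexed topologist products; this also yields $|\pi_1(\GS_\kappa)|=2^{\aleph_0}$ for $2\le\kappa\le2^{\aleph_0}$, while $|\pi_1(\GS_\kappa)|>2^{\aleph_0}$ for larger $\kappa$, so the stated range is sharp. Second, the infinitary van Kampen theorem and the recognition criterion must survive the passage to an uncountable, non-separable wedge; they do, because every relevant loop and homotopy is confined to a separable subspace. Combining (a), (b), Theorem~\ref{themain}, and the identification $\pi_1(\HA)\cong\mathcal{A}(\{\mathbb{Z}\}_{n\in\omega})$ finishes the proof.
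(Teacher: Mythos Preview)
Your plan is more ambitious than what the paper actually does. The paper does not prove Theorem~\ref{largeisomorphismclass} from scratch; it assembles it from three ingredients: (i) all groups of form~(1) are isomorphic, which is the main result of \cite{Cor2}; (ii) all groups of form~(3) are isomorphic, which is the main result of \cite{Cor3}; (iii) the identification $\pi_1(\HA)\cong\mathcal{A}(\{\mathbb{Z}\}_{n\in\omega})$, placing~(2) inside class~(3); and finally Theorem~\ref{themain} of the present paper, which links class~(1) to class~(2) via $\pi_1(\GS_2)\cong\pi_1(\HA)$. That is the entire argument---two citations plus Theorem~\ref{themain}.

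Your proposal instead aims to \emph{reprove} the contents of \cite{Cor2} and \cite{Cor3} by abstracting a single ``recognition criterion'' from the machinery behind Theorem~\ref{themain}. This is not unreasonable in spirit: the paper itself remarks that the techniques of Theorem~\ref{themain} are modifications of those in \cite{Cor2} and \cite{Cor3}, so a unified statement may well exist. But as written your criterion is left vague (``archipelago-type,'' ``sufficiently large, earring-respecting family,'' ``suitably rich''), and you concede that formulating and proving it is ``the main obstacle.'' So what you have is a program, not a proof; and since the results you need are already available as black-box citations, the program is unnecessary for the theorem at hand. If you want to match the paper, simply invoke \cite{Cor2} and \cite{Cor3} directly and let Theorem~\ref{themain} do the bridging.
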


That all groups of form (1) are isomorphic was shown by the author in \cite{Cor2}.  The interval $2 \leq \kappa \leq 2^{\aleph_0}$ is optimal since $|\pi(\GS_{\kappa})| = 1$ in case $\kappa = 0, 1$ and $|\pi(\GS_{\kappa})| > 2^{\aleph_0}$ in case $\kappa > 2^{\aleph_0}$.  Kent has shown that if Peano continua $X, Y$ are subspaces of $\mathbb{R}^2$ then $\pi_1(X) \simeq \pi_1(Y)$ if and only if $X$ and $Y$ are homotopy equivalent \cite[Theorem 1.2]{Kent}.  Thus fundamental groups of reasonable spaces in the planar setting are isomorphic precisely if an isomorphism can be realized topologically.  By contrast we know $\pi_1(\GS_2) \simeq \pi_1(\GS_3)$ and no continuous function can induce an isomorphism between the fundamental groups.  Moreover both $\GS_2$ and $\GS_3$ are Peano continua, can be embedded in $\mathbb{R}^3$, and have dimension $2$.  Thus Kent's theorem is fairly sharp.

In \cite{Cor3} the author showed that all groups of form (3) are isomorphic.  An erroneous proof of a less general statement was given by Conner, Hojka and Meilstrup in \cite{CHM}.  As of this writing, it is unknown whether the requirement regarding involutions can be removed.  We have already noted that (2) is isomorphic to a group of form (3), and the isomorphism of (2) with a group of form (1) is Theorem \ref{themain} of the current paper.

From Theorem \ref{largeisomorphismclass} one knows the automorphism group of any group $G$ from that list is enormous: $\Aut(G)$ includes a copy of the group $S_{2^{\aleph_0}}$ of all bijections on a set of size continuum (thus $\Aut(G)$ includes a copy of every group of size at most continuum).  This can be seen by applying \cite[Theorem B]{Cor} or \cite[Corolloary 3.26]{Cor2} to Theorem \ref{largeisomorphismclass}.  Such a $G$ is also locally free \cite[Theorem 11]{HH} and includes all countable locally free groups \cite{Hojka}, and this is not at all obvious for groups of form (1).

\begin{figure}
\includegraphics[height = 5cm]{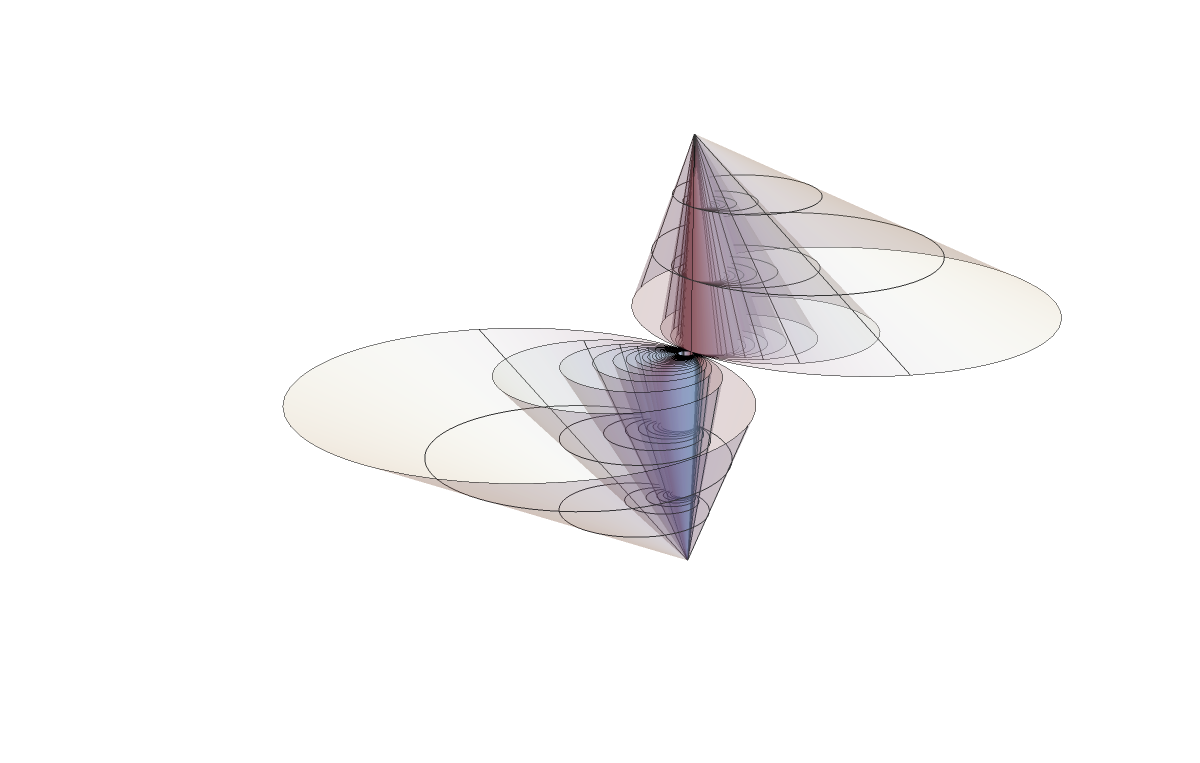}
\caption{The Griffiths double cone $\GS$}
\label{doubleconefig}
\end{figure}

The techniques in the proof Theorem \ref{themain} are modifications of those used by the author in the earlier papers \cite{Cor2}, \cite{Cor3}.  Combinatorics of infinite words, and ways in which to pair up natural segments of words, are described.  Roughly speaking, the ability to pair up more and more such words and to have the pairings tell a ``coherent'' story are what allows the proof to work.  An induction of length continuum renders the desired isomorphism, with many arbitrary choices being made along the way.  The reader may note that groups of form (1) in the statement of Theorem \ref{largeisomorphismclass} seem reasonably similar to one another, and similarly for groups of form (2) and (3).  However form (1) and form (2) have a more distinctive feel.

While this paper does not require familiarity with papers \cite{Cor2} and \cite{Cor3}, some proofs are witheld for the sake of pacing.  This occurs especially in Section \ref{Background}, but a reference to an earlier paper is usually provided.  Section \ref{Background} deals with some details of linear order and word combinatorics, especially certain natural decompositions of words.  Section \ref{strategysection} is devoted to proving the crucial Lemma \ref{homomorphism}.  Section \ref{discrete} deals with extending the collections of pairings when concatenating discretely and Section \ref{nondiscrete} deals with concatenating very nondiscretely.  Section \ref{Concludingarg} contains the final arguments for Theorem \ref{themain}.

\end{section}

\begin{section}{Combinatorial descriptions of the two groups}\label{Background}

In this section we give combinatorial characterizations of $\pi_1(\GS_2)$ and $\pi_1(\HA)$ and present some known facts about so-called reduced words and cancellation schemes.  Important notions regarding purity and decompositions are also provided.  To begin, we describe the fundamental group of the infinite earring (the reader can find more details in \cite{CC}).

\begin{subsection}{Earring group}\label{earringgroup}  The infinite earring $\mathcal{E}$, also known as the Hawaiian earring, is the union $\mathcal{E} = \bigcup_{n \in \omega} C_n$ where $C_n$ is the circle centered at $(\frac{1}{n + 1}, 0) \in \mathbb{R}^2$ of radius $\frac{1}{n + 1}$, with topology inherited from $\mathbb{R}^2$ (see Figure 3).  This space $\mathcal{E}$ is compact, path connected, locally path connected.  The fundamental group of $\mathcal{E}$ has a combinatorial description which resembles that of a free group (although $\pi_1(\mathcal{E})$ is not free \cite{Hig}).

\begin{figure}\label{infiniteearring1}
\includegraphics[width=60mm]{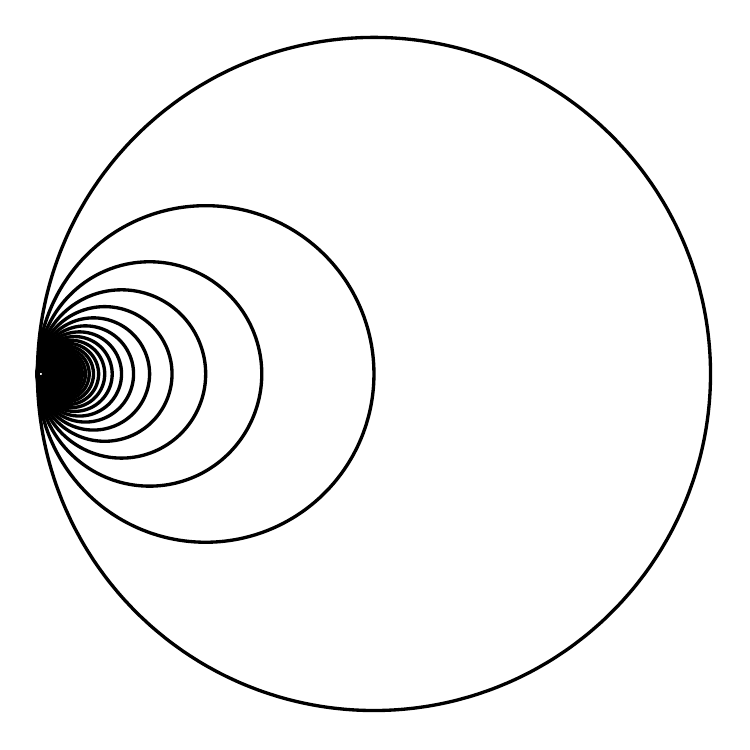}
\caption{The infinite earring $\mathcal{E}$}
\end{figure}

Let $A=\{a_n^{\pm 1}\}_{n\in \omega}$ be a countably infinite set with formal inverses (this is the set of \emph{letters}).  A \emph{word} is a function from a countable totally ordered set $\overline{W}$ to the set $A$ which is finite-to-one.  For example one has the word $W: \omega \rightarrow A$ given by $n \mapsto a_n$, which can ``written'' as $a_0a_1a_2\cdots$.  One can have a word with domain $\mathbb{Q}$ by taking $W: \mathbb{Q} \rightarrow A$ to be an injection.  We consider two words $W$ and $V$ to be the same, and write $W \equiv V$, if there exists an order isomorphism $\iota: \overline{W} \rightarrow \overline{V}$ such that for all $i \in \overline{W}$, $W(i) = V(\iota(i))$.  When such $W$ and $V$ are finite, the $\equiv$ equivalence agrees with the usual syntactic notion that the words read the same way, letter-by-letter.  Let $E$ denote the word with empty domain (the \emph{empty word}).  Let $\W_a$ denote the set of all words, considered up to $\equiv$, in the alphabet $A$.

Each word $W$ has an inverse, denoted $W^{-1}$, given by taking $\overline{W^{-1}}$ to be the set $\overline{W}$ but having the reverse order and $W^{-1}(i) = (W(i))^{-1}$.  Given two words $W_0$ and $W_1$ we form their concatenation $W_0W_1$ by taking the domain $\overline{W_0W_1}$ to be the disjoint union $\overline{W_0} \sqcup \overline{W_1}$ which is ordered to extend the orders of $\overline{W_0}$ and $\overline{W_1}$ and places elements of $\overline{W_0}$ below those of $\overline{W_1}$.  The function $W_0W_1$ is given by

\[
W_0W_1(i) = \left\{
\begin{array}{ll}
W_0(i)
                                            & \text{if } i \in \overline{W_0}, \\
W_1(i)                                        & \text{if } i \in \overline{W_1}.
\end{array}
\right.
\]

\noindent Concatenation is evidently an associative operation.  We say $V$ is a \emph{subword} of $W$ if we can write $W \equiv W_0VW_1$ for some words $W_0$ and $W_1$.  If $W_0 \equiv E$ in such a writing then we say $V$ is an \emph{initial} subword of $W$; if $W_1 \equiv E$ then we say $V$ is a \emph{terminal} subword.  We will frequently utilize a notion of \emph{infinite concatenation}.  If $W$ is nonempty let $d(W) = k$ where $k$ is the minimal subscript among the letters in the image of $W$; and $d(E) = \infty$.  Suppose that $\{W_{\lambda}\}_{\lambda \in \Lambda}$ is a collection of words indexed by a totally ordered set $\Lambda$ and that for every $N > 0$ the set $\{\lambda \in \Lambda \mid d(W_{\lambda}) \leq N \}$ is finite.  The concatenation $W \equiv \prod_{\lambda \in \Lambda} W_{\lambda}$ has domain $\bigsqcup_{\lambda \in \Lambda} \overline{W_{\lambda}}$ ordered in the natural way and we let $W(i) = W_{\lambda}(i)$ where $i \in \overline{W_{\lambda}}$.  It is clear that this function is a word since every appearance of the letter $a_k^{-1}$ will be in a $W_{\lambda}$ with $d(W_{\lambda}) \leq k$.  If each word $W_{\lambda}$ is nonempty we know the index $\Lambda$ is countable.

For $n \in \omega$ and word $W$ let $p_n(W)$ be the finite word given by the restriction $W\upharpoonright \{i \in \overline{W} \mid W(i) \in \{a_m^{\pm 1}\}_{0 \leq m \leq n}\}$.  Let $W \sim V$ if for all $n \in \omega$ the words $p_n(W)$ and $p_n(V)$ are equal as words in the free group $F(a_0, \ldots, a_{n})$.  Letting $[W]$ be the $\sim$ equivalence class of the word $W$, we obtain a group operation on $\W_a/\sim$ by defining $[W_0][W_1] := [W_0W_1]$.  The identity in the group is the equivalence class $[E]$ and the inverse is reasonably $[W]^{-1} = [W^{-1}]$.  This group is isomorphic to $\pi_1(\mathcal{E})$ and has cardinality $2^{\aleph_0}$.

As in a free group, we would prefer that the group elements be words instead of equivalence classes of words.  We say a word $W$ is \emph{reduced} if whenever we write a concatenation $W \equiv W_0W_1W_2$ with $W_1 \sim E$ we have $W_1 \equiv E$.  This definition is clearly an extension of that in free groups, and many of the same results hold.  For example we have the following (see \cite[Theorem 1.4, Corollary 1.7]{E}).

\begin{lemma}\label{reduced}  For each $W \in \W_a$ there exists a unique, up to $\equiv$, reduced word $W_0 \in  [W]$.  Furthermore if $W$ and $U$ are reduced there exist unique words $W_0, W_1, U_0, U_1$ such that 

\begin{enumerate}  \item $W \equiv W_0W_1$;

\item $U \equiv U_0U_1$;

\item $W_1\equiv U_0^{-1}$;

\item $W_0U_1$ is reduced.
\end{enumerate}

\end{lemma}

Let $\Red_a$ denote the set of reduced words in $\W_a$ and for $W \in \W_a$ let $\Red(W)$ be the (unique up to $\equiv$) reduced word such that $W \sim \Red(W)$.    The following is immediate.

\begin{lemma}\label{reducedconsequence}  If $W, U \in \W_a$ we have $\Red(WU) \equiv \Red(\Red(W)\Red(U))$.  Similarly, given $W_0, W_1, W_2 \in \W_{\kappa}$ we have $\Red(W_0W_1W_2) \equiv \Red(W_0\Red(W_1W_2)) \equiv \Red(\Red(W_0W_1)W_2)$.
\end{lemma}

From Lemmas \ref{reduced} and \ref{reducedconsequence} the group $\W_a/\sim$ is isomorphic to the set $\Red_a$ under the group operation $W*U =\Red(WU)$.  We give the following definition (see \cite[Definition 3.4]{CC}):

\begin{definition}\label{cancellation}  Given $W\in \W_a$ we say $\mathcal{S} \subseteq \overline{W} \times\overline{W}$ is a \emph{cancellation scheme} on $W$ provided

\begin{enumerate}\item for $\langle i_0, i_1 \rangle \in \mathcal{S}$ we have $i_0 < i_1$;

\item  if $\langle i_0, i_1\rangle \in \mathcal{S}$ and $\langle i_0, i_2\rangle \in \mathcal{S}$ then $i_2 = i_1$;

\item  if $\langle i_0, i_1\rangle \in \mathcal{S}$ and $\langle i_2, i_1\rangle\in \mathcal{S}$ then $i_2 = i_0$;

\item  if $\langle i_0, i_1 \rangle \in \mathcal{S}$ and $i_2\in (i_0, i_1) \subseteq \overline{W}$ there exists $i_3\in (i_0, i_1)$ such that either $\langle i_2, i_3\rangle \in \mathcal{S}$ or $\langle i_3, i_2\rangle \in \mathcal{S}$;

\item  if $\langle i_0, i_1\rangle \in \mathcal{S}$ then $W(i_0) = (W(i_1))^{-1}$.

\end{enumerate}

\end{definition}

We are using $\langle \cdot, \cdot \rangle$ to denote an ordered pair and $(*, *)$ to denote an open interval.  We'll also use $\langle \cdot \rangle$ to denote a generated subgroup (a lack of a comma makes this unambiguous).

By Zorn's Lemma, every cancellation scheme $\mathcal{S}$ on a word $W$ is included in a maximal cancellation scheme $\mathcal{S}'$, i.e. $\mathcal{S} \subseteq \mathcal{S}'$ and $\mathcal{S}'$ is not a proper subset of a cancellation scheme on $W$.  A maximal cancellation scheme discloses the reduced word representative, just the same as freely reducing a finite word until free reductions are no longer possible.  The following is \cite[Theorem 3.9]{CC}:

\begin{lemma}\label{cancellationreduces}  If $\mathcal{S}$ is a maximal cancellation for $W\in \W_a$ then the restriction

\begin{center}
$W\upharpoonright \{i\in \overline{W}\mid (\neg \exists i')(\langle i, i'\rangle \in \mathcal{S} \text{ or }\langle i, i'\rangle \in \mathcal{S})\} \equiv\Red(W)$.
\end{center}

\noindent More particularly a word has only trivial cancellation scheme if and only if that word is reduced.  Thus if $W \in \W_a$ with $W \equiv \prod_{\lambda \in \Lambda} W_{\lambda}$ then $\Red(W) \equiv \Red(\prod_{\lambda \in \Lambda}\Red(W_{\lambda}))$.
\end{lemma}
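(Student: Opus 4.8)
The plan is to bootstrap from the classical theory of free reduction in finite-rank free groups, using the projections $p_n$ to shuttle between the finitary and infinitary settings. For a cancellation scheme $\mathcal{S}$ on a word $W$ write $U_{\mathcal{S}}\subseteq\overline{W}$ for the set of indices lying in no pair of $\mathcal{S}$. The first point is that condition (5) of Definition \ref{cancellation} forces the two entries of any pair of $\mathcal{S}$ to carry the same subscript; consequently $\mathcal{S}_n:=\mathcal{S}\cap(\overline{p_n(W)}\times\overline{p_n(W)})$ is obtained from $\mathcal{S}$ by deleting entire pairs, it is a cancellation scheme on the finite word $p_n(W)$ (the only non-immediate point being condition (4), which persists because the partner of an index of subscript $\le n$ again has subscript $\le n$ and so survives in $p_n(W)$), and $U_{\mathcal{S}_n}=U_{\mathcal{S}}\cap\overline{p_n(W)}$. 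Hence $p_n(W\upharpoonright U_{\mathcal{S}})\equiv p_n(W)\upharpoonright U_{\mathcal{S}_n}$ for every $n$.

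Next I would record the finite case directly. For a finite word $w$ and any cancellation scheme $\mathcal{T}$ on it, induction on $|\mathcal{T}|$ shows $w\upharpoonright U_{\mathcal{T}}$ equals $w$ in $F(a_0,a_1,\ldots)$: a pair of minimal width is forced by condition (4) to be adjacent, one cancels it, and $\mathcal{T}$ minus that pair is a cancellation scheme on the shorter word (no other pair can straddle the deleted one, again by condition (4)); and if $\mathcal{T}$ is maximal then $w\upharpoonright U_{\mathcal{T}}$ has no adjacent inverse pair, since such a pair could be adjoined to $\mathcal{T}$, so $w\upharpoonright U_{\mathcal{T}}$ is reduced. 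Feeding $\mathcal{S}_n$ into the first assertion and using the previous paragraph, $p_n(W\upharpoonright U_{\mathcal{S}})$ and $p_n(W)$ agree in $F(a_0,\ldots,a_n)$ for all $n$, whence $W\upharpoonright U_{\mathcal{S}}\sim W$. This ``soundness'' uses no maximality.

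The step I expect to be the main obstacle is the converse input:
\[
(\star)\qquad\text{if }V\sim E\text{ then }V\text{ carries a cancellation scheme }\mathcal{R}\text{ with }\dom(\mathcal{R})\cup\ran(\mathcal{R})=\overline{V}.
\]
I would prove $(\star)$ by compactness. If $V\sim E$ then each finite word $p_n(V)$ equals $E$ in $F(a_0,\ldots,a_n)$, hence freely reduces to $E$; reading off a reduction sequence gives a total cancellation scheme on $p_n(V)$, and restriction along $p_n(V)\subseteq p_{n+1}(V)$ sends a total cancellation scheme on $p_{n+1}(V)$ to one on $p_n(V)$ (the bookkeeping of the first paragraph). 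König's Lemma, applied to the finitely branching tree whose level-$n$ nodes are the total cancellation schemes on $p_n(V)$ with edges given by restriction, produces a coherent sequence whose union is a total cancellation scheme on $V$ (condition (4) for a pair living at level $n$ against an interior index of subscript $m$ is checked at level $\max(n,m)$).

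Finally I assemble the lemma. Suppose $\mathcal{S}$ is maximal but $W\upharpoonright U_{\mathcal{S}}\equiv A_0VA_1$ with $V\not\equiv E$ and $V\sim E$, and let $\widehat{I}\subseteq\overline{W}$ be the convex hull of the interval $I\subseteq U_{\mathcal{S}}$ corresponding to $V$. Condition (4) forbids any pair of $\mathcal{S}$ from having exactly one entry in $\widehat{I}$ (the other entry would otherwise force, via an element of $I$ between the two entries, some element of $I\subseteq U_{\mathcal{S}}$ to be matched), so $\mathcal{S}\cap(\widehat{I}\times\widehat{I})$ is a cancellation scheme on $W\upharpoonright\widehat{I}$ with unmatched set $I$; soundness gives $W\upharpoonright\widehat{I}\sim W\upharpoonright I\equiv V\sim E$, hence also $W\upharpoonright I\sim E$. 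Applying $(\star)$ to $W\upharpoonright I$ and adjoining the resulting total scheme to $\mathcal{S}$ produces a cancellation scheme on $W$ properly containing $\mathcal{S}$ (a final verification of condition (4), again exploiting that no $\mathcal{S}$-pair straddles $\widehat{I}$ and that the entries of $I$ are $\mathcal{S}$-unmatched), contradicting maximality; so $W\upharpoonright U_{\mathcal{S}}$ is reduced and, being $\sim$-equivalent to $W$, equals $\Red(W)$. The clause about trivial cancellation schemes is the special case: the empty scheme is maximal precisely when $W$ is reduced. For the last sentence, given $W\equiv\prod_{\lambda\in\Lambda}W_{\lambda}$, choose maximal schemes $\mathcal{S}_{\lambda}$ on the $W_{\lambda}$; since each $\overline{W_{\lambda}}$ is a convex block of $\overline{W}$, the union $\bigcup_{\lambda}\mathcal{S}_{\lambda}$ is a cancellation scheme on $W$ with $W\upharpoonright U_{\bigcup\mathcal{S}_{\lambda}}\equiv\prod_{\lambda}(W_{\lambda}\upharpoonright U_{\mathcal{S}_{\lambda}})\equiv\prod_{\lambda}\Red(W_{\lambda})$, and soundness yields $\prod_{\lambda}\Red(W_{\lambda})\sim W$, i.e. $\Red(\prod_{\lambda\in\Lambda}\Red(W_{\lambda}))\equiv\Red(W)$.
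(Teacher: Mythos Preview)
The paper does not prove this lemma; it simply cites \cite[Theorem 3.9]{CC}. Your proposal therefore supplies what the paper omits, and it is correct.

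A few remarks on the details you might tighten. In the K\"onig's Lemma step, what you describe is strictly a forest (there may be several total schemes on $p_0(V)$), but since each level is finite and nonempty the usual trick of adding a formal root, or picking an infinite component, gives the branch you need. In the assembly step, your assertion that no $\mathcal{S}$-pair has exactly one entry in $\widehat{I}$ is exactly right, and in fact the same observation gives more: condition (4) forces every point of $(i_0,i_1)$ to be $\mathcal{S}$-matched whenever $\langle i_0,i_1\rangle\in\mathcal{S}$, so no $\mathcal{S}$-pair can have an element of $I$ strictly between its entries. This is what makes the verification of condition (4) for $\mathcal{S}\cup\mathcal{R}$ go through cleanly in both directions (an $\mathcal{R}$-pair with an $\widehat{I}\setminus I$ point between, and an $\mathcal{S}$-pair with an $I$-point between), and it is worth saying explicitly. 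Finally, for the ``only trivial scheme iff reduced'' clause, the forward direction is your main result specialized; for the converse you should note that a nonempty scheme yields, via condition (4), a total scheme on some closed interval $[i_0,i_1]$, whence $W\upharpoonright[i_0,i_1]\sim E$ by soundness, contradicting reducedness.

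Your approach---projecting to the finite truncations, handling the free-group case by hand, and recovering the infinitary statement by a compactness argument---is the natural one and is essentially how the original Cannon--Conner argument proceeds.
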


Considering, for example, the word

\begin{center}

$W \equiv a_0a_1a_2\cdots \cdots a_2^{-1}a_1^{-1}a_0^{-1}$

\end{center}

\noindent we know that this word is $\sim$ to the empty word $E$.  A (maximal) cancellation scheme is given by $\{\langle \min\overline{W}, \max\overline{W}\rangle, \langle \min(\overline{W} \setminus \min\overline{W}), \max(\overline{W} \setminus \max\overline{W})\rangle, \ldots\}$.
\end{subsection}

\begin{subsection}{The Griffiths space and harmonic archipelago groups}

Having reviewed a combinatorial description of the earring group, we now give the combinatorial description of the fundamental group of the Griffiths space and then of the harmonic archipelago.  Beginning with the Griffiths space, we take a larger countably infinite set with formal inverses $AB = \{a_n^{\pm 1}\}_{n \in \omega} \sqcup \{b_n^{\pm 1}\}_{n \in \omega}$.  The ideas and definitions  from subsection \ref{earringgroup} are applied to this setting as well.  For example a word is a function $W: \overline{W} \rightarrow AB$ which is finite-to-one, the concept $\equiv$ is defined in the same way.  The set of words in $AB$, up to $\equiv$, will be denoted $\W_{a, b}$.  For each $n \in \omega$, $p_n(W) = W \upharpoonright \{i \in \overline{W} \mid W(i) \in \{a_m^{\pm 1}\}_{0 \leq m \leq n} \cup \{b_m^{\pm 1}\}_{0 \leq m \leq n}\}$.  For words $W, V \in \W_{a, b}$ we write $W \sim V$ under precisely the same circumstances as before, reduced words are defined in the same way, and Lemmas \ref{reduced}, \ref{reducedconsequence} and \ref{cancellationreduces} hold.  Write $\Red_{a, b}$ for the group of reduced words in the alphabet $AB$.  Incidentally, it is easy to see that $\Red_a$ is isomorphic to $\Red_{a, b}$, by extending the map on letters $A \rightarrow AB$ given by $a_{2n} \mapsto a_{n}$, $a_{2n + 1} \mapsto b_n$.

We will say a word $W \in \Red_{a, b}$ is \emph{$a$-pure} (respectively \emph{$b$-pure}) if all the letters which appear in $W$ are in $\{a_n^{\pm 1}\}_{n \in \omega}$ (respectively $\{b_n^{\pm 1}\}_{n \in \omega}$).  A word $W \in \Red_{a, b}$ is \emph{pure} provided it is $a$-pure or it is $b$-pure.  Write $\Pure_{a, b}$ for the set of pure words in $\Red_{a, b}$.  The following is well-known (see, for example, \cite[Theorem 2.8]{Cor2}).

\begin{lemma}\label{GScombinatorial}  The fundamental group $\pi_1(\GS_2)$ is isomorphic to the quotient $$\Red_{a, b}/\langle\langle \Pure_{a, b}\rangle\rangle.$$
\end{lemma}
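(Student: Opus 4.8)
The statement asserts that $\pi_1(\GS_2)$ is isomorphic to $\Red_{a,b}/\langle\langle \Pure_{a,b}\rangle\rangle$. The Griffiths space $\GS_2$ is the metric wedge of two cones $CX_0, CX_1$ over the infinite earring, glued along a point $w$ which is simultaneously the wedge point of each earring and lies in each cone. The strategy is to combine van Kampen–type reasoning at the good subspace $\GS_2 \setminus \{v_0, v_1\}$ (where $v_0, v_1$ are the two cone points) with the combinatorial model of the earring group from Subsection \ref{earringgroup}. First I would set up the combinatorial map: a loop in $\GS_2$ based at $w$ can, after a homotopy, be assumed to meet each cone point only on a closed nowhere-dense set of parameter values, and the complement decomposes into countably many open arcs each of which maps into one of the two punctured cones $CX_i \setminus \{v_i\}$, which deformation retracts onto the earring $X_i \subseteq \GS_2$. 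Reading off, over each such arc, the earring word (in the $a$-letters if $i=0$, the $b$-letters if $i=1$) and concatenating in the linear order of the parameter interval produces a word $W \in \W_{a,b}$; passing to $\Red(W)$ and then to the quotient by $\langle\langle \Pure_{a,b}\rangle\rangle$ gives the candidate isomorphism $\Phi: \pi_1(\GS_2) \to \Red_{a,b}/\langle\langle \Pure_{a,b}\rangle\rangle$. One checks using Lemma \ref{reducedconsequence} (and the infinite-concatenation machinery, Lemma \ref{cancellationreduces}) that $\Phi$ is well defined on the level of $\sim$-classes and is a homomorphism, since concatenation of loops corresponds to concatenation of words.

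Next I would verify that $\Phi$ descends to the claimed quotient and is surjective. Surjectivity is easy: any reduced word in $\W_{a,b}$ with its natural linear-order structure is realized by an explicit loop in $\GS_2$ (shrinking the successive syllables towards $w$ exactly as in the standard realization of earring words), so $\Phi$ is onto $\Red_{a,b}$, hence onto the quotient. To see that $\langle\langle \Pure_{a,b}\rangle\rangle$ is killed — equivalently that pure loops are nullhomotopic in $\GS_2$ — note that an $a$-pure reduced word is realized by a loop lying entirely in the subspace $X_0 \subseteq CX_0$; since $CX_0$ is contractible (it is a cone), that loop is nullhomotopic in $\GS_2$. Thus $\Phi$ factors through $\Red_{a,b}/\langle\langle \Pure_{a,b}\rangle\rangle$.

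For injectivity of the induced map I would argue in the reverse direction, building a homomorphism $\Psi$ from $\Red_{a,b}/\langle\langle \Pure_{a,b}\rangle\rangle$ back to $\pi_1(\GS_2)$ and checking $\Psi \circ \Phi = \mathrm{id}$. The map $\Psi$ sends a reduced word to the homotopy class of its standard realizing loop described above; one must check this respects the group operation $W * U = \Red(WU)$ (immediate from the description of concatenation of the realizing loops together with the fact that an inverse-cancelling pair of syllables gives a backtrack, hence a homotopically trivial segment) and that pure words go to the trivial element (as above, via contractibility of a cone). The composite $\Psi \circ \Phi$ is then the identity on $\pi_1(\GS_2)$ by a direct homotopy: given a loop $\gamma$, its word $W$ has $\Psi([W])$ represented by the standard realization of $\Red(W)$, and this is homotopic to $\gamma$ because on each of the countably many complementary arcs both loops represent the same earring element (so are homotopic rel endpoints within the corresponding contractible cone, after collapsing the cancelling material), and these homotopies can be glued continuously — the crucial point being that the "sizes" of the arcs shrink, so the glued homotopy is continuous even across the accumulation set of parameter values hitting the cone points.

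The main obstacle is exactly that last continuity-of-gluing issue: one is patching together countably many homotopies indexed by a linearly ordered (generally non-well-ordered, dense-in-places) set of complementary arcs, and must ensure the resulting map on $[0,1]^2$ is continuous at the limit points where arcs accumulate. This is handled by the same device that makes infinite concatenation of words well defined (the condition that for each $N$ only finitely many syllables involve letters of index $\le N$), which translates topologically into the fact that all but finitely many of the complementary arcs map into arbitrarily small neighborhoods of $w$; combined with the standard fact that $CX_i \setminus \{v_i\}$ deformation retracts onto $X_i$ through retractions that are "small near $w$," one gets the needed uniform control. The remainder — that distinct reduced words give non-homotopic loops, i.e. faithfulness of $\Psi$, which is equivalent to injectivity of $\Phi$ modulo $\Psi\circ\Phi=\mathrm{id}$ — then follows formally, so no separate hard argument is needed there. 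I would cite \cite[Theorem 2.8]{Cor2} for the technical continuity lemmas rather than reprove them.
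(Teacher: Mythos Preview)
The paper does not prove this lemma; it simply records it as well-known and cites \cite[Theorem 2.8]{Cor2}. So there is no detailed argument in the paper to compare your sketch against, and since you end by deferring to the same reference, your proposal is at least consistent with the paper's treatment.

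That said, the logical structure you outline has a genuine circularity. You define $\Phi:\pi_1(\GS_2)\to \Red_{a,b}/\langle\langle\Pure_{a,b}\rangle\rangle$ by reading a word off a representative loop, and then assert that injectivity of $\Psi$ (equivalently, faithfulness) ``follows formally'' from $\Psi\circ\Phi=\mathrm{id}$ together with surjectivity of $\Phi$. That deduction is valid only once $\Phi$ is known to be well-defined on \emph{homotopy classes}, not merely on loops. You never establish this: the paragraph on well-definedness addresses only the ambiguity in parsing a fixed loop into a word, not invariance under homotopy of the loop. But ``homotopic loops yield the same word modulo $\langle\langle\Pure_{a,b}\rangle\rangle$'' is exactly the statement that $\ker\Psi\subseteq\langle\langle\Pure_{a,b}\rangle\rangle$, which is the entire hard content of the lemma. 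So the step you label as formal is in fact the step that carries all the weight.

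The cleaner route, and the one the cited reference takes, is to run everything through $\Psi$: the inclusion of the double earring into $\GS_2$ induces a surjection $\Red_{a,b}\simeq\pi_1(X_0\vee X_1)\to\pi_1(\GS_2)$ whose kernel visibly contains $\langle\langle\Pure_{a,b}\rangle\rangle$ (each cone is contractible), and the substantive work is to show the kernel is no larger by analyzing how a nullhomotopy in $\GS_2$ passes over the cone points. Your continuity-of-gluing discussion is relevant to that analysis but does not by itself finish it.
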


Now we take a different alphabet $C = \{c_n^{\pm 1}\}_{n \in \omega}$ and let $\W_c$ denote the set of words using the alphabet $C$, up to $\equiv$, and $\Red_c$ denote the set of reduced words.  If $n \in \omega$ we shall say that $U \in \Red_c$ is $n$-pure if $n$ is the only subscript of the letters appearing in $U$, which is evidently the same thing as being a word of the form $c_n^z$ where $z \in \mathbb{Z}$.  The empty word $E$ is $n$-pure for each $n \in \omega$.  We'll say a word $U \in \Red_c$ is \emph{pure} provided it is $n$-pure for some $n \in \omega$.  Let $\Pure_c$ denote the set of pure words in $\Red_c$.  The following is a consequence of \cite[Theorem 5]{CHM}.

\begin{lemma}\label{HAcombinatorial}  The fundamental group $\pi_1(\HA)$ is isomorphic to the quotient $$\Red_c/\langle\langle \Pure_c\rangle\rangle.$$
\end{lemma}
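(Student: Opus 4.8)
The plan is to realize $\pi_1(\HA)$ as a quotient of the earring group $\pi_1(\mathcal{E}) \cong \Red_c$ and then to identify the relevant normal subgroup with $\langle\langle \Pure_c \rangle\rangle$. First I would fix the natural embedding $\iota \colon \mathcal{E} \hookrightarrow \HA$ carrying the $n$-th circle $C_n$ of the earring onto a loop $c_n$ that runs once around the base of the $n$-th hill, all of these based at a point arbitrarily close to the distinguished point $o$, and write $\iota_* \colon \pi_1(\mathcal{E}) \to \pi_1(\HA)$ for the induced homomorphism. The first claim is that $\iota_*$ is surjective: since $\HA \setminus \{o\}$ is homeomorphic to a disk with a boundary point removed, and the complement in $\HA$ of the open hills is a planar surface with free fundamental group that deformation retracts onto a Hawaiian earring, any based loop in $\HA$ can be homotoped so that the part of it lying over the interior of the $n$-th hill (a $2$-cell) is pushed onto its bounding circle $c_n$; carrying this out for every $n$ with the standard compactness bookkeeping near $o$ produces a homotopic loop whose image lies in $\iota(\mathcal{E})$.

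Next I would observe that each $c_n$ bounds a disk in $\HA$, namely the $n$-th hill itself, which has height one so that the homotopy ``climbing over it'' is perfectly continuous; hence $\iota_*([c_n]) = 1$ and $\iota_*$ factors through $\pi_1(\mathcal{E})/\langle\langle \{[c_n] : n \in \omega\} \rangle\rangle$. The substantive point — that nothing more is killed — is that whenever $W \in \Red_c$ represents an element of $\ker \iota_*$, so that the corresponding loop bounds a singular disk $D^2 \to \HA$, then $W$ can be written as a suitably convergent (hence legitimate) product of conjugates of the $c_n^{\pm 1}$ and therefore lies in $\langle\langle \{c_n : n \in \omega\} \rangle\rangle$. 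I do not intend to reprove this: it is precisely the content of \cite[Theorem 5]{CHM}, namely the analysis of null-homotopies of $\HA$ via a limiting argument over the finitely many hills that a compact disk meets essentially outside each neighborhood of $o$. The main obstacle, were one to carry it out from scratch, is exactly this last step: infinitely many $2$-cells are attached to $\mathcal{E}$ and they accumulate at $o$, so the naive infinite van Kampen theorem is unavailable and the relation word must be extracted by a careful limiting and diagonalization procedure.

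Finally I would do the only genuinely new bookkeeping, the combinatorial identification of the normal subgroup. Under the isomorphism $\pi_1(\mathcal{E}) \cong \Red_c$ with group operation $W * U = \Red(WU)$, the loops $[c_n]$ correspond to the length-one words $c_n$; and since $c_n^z$ is a power of $c_n$ for every $z \in \mathbb{Z}$, the subgroup $\langle \Pure_c \rangle$ is exactly the free subgroup of $\Red_c$ on $\{c_n\}_{n \in \omega}$, i.e. $*_{n \in \omega}\mathbb{Z}$. Consequently $\langle\langle \{c_n : n \in \omega\} \rangle\rangle = \langle\langle \Pure_c \rangle\rangle$ inside $\Red_c$, and combining this with the factorization of $\iota_*$ and with $\ker \iota_* = \langle\langle \{c_n\} \rangle\rangle$ from the previous paragraph gives $\pi_1(\HA) \cong \Red_c/\langle\langle \Pure_c \rangle\rangle$, as desired. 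Equivalently, one may note that \cite[Theorem 5]{CHM} already identifies $\pi_1(\HA)$ with the archipelago group on $\mathbb{Z}$, and $\mathcal{A}(\{\mathbb{Z}\}_{n \in \omega}) = \topprod_{n \in \omega} \mathbb{Z}/\langle\langle *_{n \in \omega}\mathbb{Z}\rangle\rangle$ is literally $\Red_c/\langle\langle \Pure_c \rangle\rangle$ once one unwinds the definition of $\topprod$ in the alphabet $C$.
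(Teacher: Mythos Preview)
Your proposal is correct and follows the same approach as the paper: the paper simply states that the lemma is a consequence of \cite[Theorem 5]{CHM} and gives no further argument, while you spell out the translation from that theorem to the specific quotient $\Red_c/\langle\langle \Pure_c\rangle\rangle$. Your closing paragraph, noting that $\mathcal{A}(\{\mathbb{Z}\}_{n\in\omega})$ is literally $\Red_c/\langle\langle \Pure_c\rangle\rangle$ once the definitions are unwound, is exactly the deduction the paper leaves implicit.
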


For a word $W \in \Red_{a, b}$ we will let $[[W]]$ denote the equivalence class of $W$ in the quotient $\Red_{a, b}/\langle\langle \Pure_{a, b}\rangle\rangle$.  We will similarly let $[[U]]$ denote the equivalence class of a word $U \in \Red_c$ in the quotient $\Red_c/\langle\langle \Pure_c\rangle\rangle$.  The usage is unambiguous since the alphabets $AB$ and $C$ are disjoint.  Let $\beth_{a, b}$ denote the quotient homomorphism from $\Red_{a, b}$ to $\Red_{a, b}/\langle\langle \Pure_{a, b}\rangle\rangle$, and similarly $\beth_c$ that from $\Red_c$ to $\Red_c/\langle\langle \Pure_c\rangle\rangle$.

\end{subsection}

\begin{subsection}{Pure decomposition}

In this subsection we shall review a very natural way in which to decompose a word in $\Red_{a, b}$, or in $\Red_c$, as a concatenation of pure subwords.  For this we introduce an abuse of notation as follows.  If $\{\Lambda_{\lambda}\}_{\lambda \in \Lambda}$ is a collection of totally ordered sets, with $\Lambda$ also totally ordered, then the concatenation $\prod_{\lambda \in \Lambda} \Lambda_{\lambda}$ is the totally ordered set whose underlying set is the disjoint union $\bigsqcup_{\lambda \in \Lambda} \Lambda_{\lambda}$.  The order on $\prod_{\lambda \in \Lambda} \Lambda_{\lambda}$ is the one which extends the order on each of the $\Lambda_{\lambda}$ and places elements of $\Lambda_{\lambda}$ below those in $\Lambda_{\lambda'}$ if $\lambda$ is below $\lambda'$ in $\Lambda$.  For totally ordered sets we also write $\Lambda' = \prod_{\lambda \in \Lambda} I_{\lambda}$ if $\Lambda'$ is the disjoint union of intervals $I_{\lambda} \subseteq \Lambda'$ and the elements of $I_{\lambda}$ are all below those of $I_{\lambda'}$ when $\lambda$ is below $\lambda'$ in $\Lambda$.  By interval, we mean a convex subset in a totally ordered set, so an interval may be empty, may contain a minimal element, etc.  An interval $I \subseteq \Lambda$ is \emph{initial} if $\Lambda \setminus I$ contains no elements below an element in $I$, and the definition of a \emph{terminal} interval is dual.

Given a word $W \in \Red_{a, b}$ we decompose the domain $\overline{W} = \prod_{\lambda \in \Lambda} I_{\lambda}$ such that each $I_{\lambda}$ is a nonempty maximal interval with $W \upharpoonright I_{\lambda}$ pure.  We will call this the \emph{pure decomposition of the domain of} $W$ (abbreviated $p$-decomposition of the domain of $W$).  Notice that the index $\Lambda$, which we will call the \emph{$p$-index} and often denote $p^*(W)$, is defined up to order isomorphism.  Write $W \equiv_p \prod_{\lambda \in \Lambda} W_{\lambda}$ to express that $\overline{W} = \prod_{\lambda \in \Lambda} \overline{W_{\lambda}}$ is the $p$-decomposition of the domain of $W$ (this will be called the $p$-decomposition of $W$).  The $p$-index of the empty word $E$ will be the empty set.

If $W \equiv_p \prod_{\lambda \in p^*(W)} W_{\lambda}$ and $I$ is an interval in $p^*(W)$ then we let $W \upharpoonright_p I$ denote the word $\prod_{\lambda \in I} W_{\lambda}$.  Given words $W, W' \in \Red_{a, b}$ we say that $W'$ is a \emph{p-chunk} of $W$ if for some interval $I \subseteq p^*(W)$ we have $W' \equiv W \upharpoonright_p I$.  For $W \in \Red_{a, b}$ we let $\pchunk(W)$ denote the set of p-chunks of $W$.  If a p-chunk of a word $W \equiv_p \prod_{\lambda \in p^*(W)} W_{\lambda}$ is pure, then evidently it is the empty word or it is one of the $W_{\lambda}$.

The pure decomposition behaves in predictable ways when one concatenates and reduces two elements in $\Red_{a, b}$.  We recount some lemmas which will be used.

\begin{lemma}\label{pchunkmultiplication}\cite[Lemma 3.1]{Cor2}  Suppose that $W, W' \in \Red_{a, b}$ with $W \equiv_p \prod_{\lambda \in \Lambda} W_{\lambda}$ and $W' \equiv_p \prod_{\lambda' \in \Lambda'} W_{\lambda'}'$.  Then there exists a (possibly empty) initial interval $I \subseteq \Lambda$, a (possibly empty) terminal interval $I' \subseteq \Lambda'$ such that either

\begin{enumerate}[(i)] \item $\Red(WW') \equiv_p \prod_{\lambda \in I}W_{\lambda}\prod_{\lambda'\in I'}W_{\lambda'}'$; or

\item there exist $\lambda_0\in \Lambda$ which is the least element strictly above all elements in $I$, $\lambda_1\in \Lambda'$ which is the greatest element strictly below all elements of $I'$ and 

\begin{center}
$\Red(WW') \equiv_p (\prod_{\lambda \in I}W_{\lambda})V(\prod_{\lambda'\in I'}W_{\lambda'}')$ 
\end{center}

\noindent where $V \equiv \Red(W_{\lambda_0}W_{\lambda_1}')\not\equiv E$ is pure.
\end{enumerate}

\end{lemma}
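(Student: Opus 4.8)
The plan is to reduce the whole question to Lemma~\ref{reduced}. Applying that lemma to $W$ and $W'$ produces factorizations $W\equiv W_0W_1$ and $W'\equiv W_0'W_1'$ with $W_1\equiv (W_0')^{-1}$ and $W_0W_1'$ reduced; since $W_1W_0'\sim E$, Lemma~\ref{reducedconsequence} gives $\Red(WW')\equiv W_0W_1'$. So it suffices to describe the pure decomposition of an arbitrary initial segment $W_0$ of $W$ in terms of $W\equiv_p\prod_{\lambda\in\Lambda}W_\lambda$, to do the dual for the terminal segment $W_1'$ of $W'$, and then to read off the pure decomposition of the reduced word $W_0W_1'$.

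For the first of these I would record the elementary observation that the Dedekind cut of $\overline W$ at the $W_0$--$W_1$ boundary sits relative to the $p$-decomposition in exactly one of two ways: either it falls on a boundary between consecutive $\overline{W_\lambda}$'s (including the extreme cuts), in which case $\overline{W_0}=\prod_{\lambda\in I}\overline{W_\lambda}$ and $\overline{W_1}=\prod_{\lambda\in\Lambda\setminus I}\overline{W_\lambda}$ for an initial interval $I\subseteq\Lambda$, and these are the $p$-decompositions of $W_0,W_1$; or it falls strictly inside one chunk $\overline{W_{\lambda_0}}$, in which case $W_{\lambda_0}\equiv \hat W\check W$ with $\hat W,\check W$ nonempty and pure of the type of $W_{\lambda_0}$, and $W_0\equiv_p(\prod_{\lambda<\lambda_0}W_\lambda)\hat W$, $W_1\equiv_p\check W\prod_{\lambda>\lambda_0}W_\lambda$ --- the point being that, since consecutive chunks of $W$ have opposite type, neither $\hat W$ nor $\check W$ fuses with a neighbour. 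The dual statement holds for $W_0',W_1'$ inside $W'$, with a terminal interval $I'\subseteq p^*(W')$ and possibly an interior chunk $W_{\lambda_1}'\equiv\hat W'\check W'$.

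It then remains to combine these descriptions using $W_1\equiv (W_0')^{-1}$ and the reducedness of $W_0W_1'$. Since $W_1\equiv (W_0')^{-1}$, these two words have the same $p$-decomposition (up to order isomorphism), so their chunk sequences match up, and this pins down how the two cuts fit together. When both cuts fall on chunk boundaries one gets $\Red(WW')\equiv_p \prod_{\lambda\in I}W_\lambda\prod_{\lambda'\in I'}W_{\lambda'}'$, which is alternative (i), unless the last chunk of $W_0$ and the first chunk of $W_1'$ share a type, in which case they fuse into a single pure chunk $V$; shrinking $I$ and $I'$ by one element and letting $\lambda_0,\lambda_1$ be the discarded indices, $V\equiv\Red(W_{\lambda_0}W_{\lambda_1}')$, which is alternative (ii). When at least one cut is interior --- say only the $W$-cut, inside $W_{\lambda_0}\equiv\hat W\check W$ --- matching the chunk sequences forces $\check W\equiv (W_{\lambda_1}')^{-1}$ for a full chunk $W_{\lambda_1}'$ of $W'$, so the word sitting strictly between $\prod_{\lambda\in I}W_\lambda$ and $\prod_{\lambda'\in I'}W_{\lambda'}'$ is exactly $\hat W\equiv\Red(\hat W\check W\,W_{\lambda_1}')\equiv\Red(W_{\lambda_0}W_{\lambda_1}')$; the case where both cuts are interior is similar and leaves the middle word $\Red(\hat W\check W')\equiv\Red(W_{\lambda_0}W_{\lambda_1}')$. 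In every instance of the second alternative this middle word $V$ is pure, is a subword of the reduced word $W_0W_1'$ (hence equals its own reduction), and is nonempty because it contains $\hat W$ (or a surviving full chunk). This is exactly the statement, and since it is \cite[Lemma~3.1]{Cor2} I would present only this sketch and defer to that reference.

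The fussiest step is the last one: checking, in the interior-cut configurations, that the indices $\lambda_0=\min(\Lambda\setminus I)$ and $\lambda_1$ (the greatest element of $p^*(W')$ below $I'$) genuinely exist, that the leftover piece is a single chunk rather than several, and that it neither collapses to $E$ nor absorbs an adjacent chunk. All three come from the same two inputs --- uniqueness of the pure decomposition, and the fact (handed to us by Lemma~\ref{reduced}) that $W_0W_1'$ admits no further cancellation at its junction --- but keeping the bookkeeping straight across the handful of configurations is where the care lies.
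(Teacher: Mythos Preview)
Your sketch is correct and follows the natural route: apply Lemma~\ref{reduced} to obtain $\Red(WW')\equiv W_0W_1'$, locate the two cuts relative to the $p$-decompositions of $W$ and $W'$, and then do the small case analysis (boundary/boundary, boundary/interior, interior/interior) to see whether a single pure chunk $V$ survives at the seam. The bookkeeping you flag at the end---that $\lambda_0,\lambda_1$ exist, that the leftover is a single chunk, and that it is nonempty and does not fuse with a neighbour---is exactly what needs checking, and your reasoning for each is sound.

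As for comparison with the paper: there is nothing to compare. The paper does not prove this lemma at all; it simply states it with the citation \cite[Lemma~3.1]{Cor2} and moves on. Your plan to give the sketch and defer to that reference is therefore entirely consistent with (indeed more generous than) the paper's own treatment.
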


\begin{lemma}\label{elementsofthegeneratedsubgroup}\cite[Lemma 3.2]{Cor2}  Suppose that $X \subseteq \Red_{a, b}$.  For each nonempty element $V$ of the generated subgroup $\langle \bigcup_{W \in X}\pchunk(W) \rangle \leq \Red_{a, b}$ if $V \equiv_p \prod_{\lambda \in \Lambda} W_{\lambda}$ then there exist nonempty intervals $I_0, \ldots, I_n$ in $\Lambda$ such that

\begin{enumerate}[(i)]

\item $\Lambda = \prod_{i = 0}^n I_i$; and

\item for each  $0 \leq i \leq n$ at least one of the following holds:

\begin{enumerate}[(a)]

\item $I_i$ is a singleton $\{\lambda\}$ such that $W_{\lambda}$ is the reduction of a finite concatenation of pure p-chunks of elements in $X^{\pm 1}$;

\item $\prod_{\lambda \in I_i} W_{\lambda}$ is a p-chunk of some element in $X^{\pm 1}$.

\end{enumerate}

\end{enumerate}
\end{lemma}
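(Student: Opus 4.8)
The plan is to argue by induction on the number of p-chunks needed to build $V$. First observe that $(\pchunk(W))^{-1} = \pchunk(W^{-1})$, since inverting a word reverses its pure decomposition and inverts each pure piece; hence every nonempty element of $\langle \bigcup_{W \in X}\pchunk(W)\rangle$ can be written as $V \equiv \Red(Y_1 \cdots Y_m)$ with each $Y_j$ a \emph{nonempty} p-chunk of an element of $X^{\pm 1}$, and we induct on $m$. For $m = 1$ the word $V \equiv Y_1$ is itself a p-chunk of an element of $X^{\pm 1}$ (a p-chunk of a reduced word being reduced), so $n = 0$ and $I_0 = \Lambda$ work, satisfying (b) of (ii).

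For the inductive step set $V' \equiv \Red(Y_1 \cdots Y_{m-1})$, so that $V \equiv \Red(V' Y_m)$. If $Y_m \equiv E$ then $V \equiv V'$ and the inductive hypothesis applies; if $V' \equiv E$ then $V \equiv Y_m$ and we are in the base case; otherwise both are nonempty, with pure decompositions $V' \equiv_p \prod_{\mu \in M} U_\mu$ and $Y_m \equiv_p \prod_{\nu \in N} Y_\nu$. Applying Lemma~\ref{pchunkmultiplication} to the pair $(V', Y_m)$ yields an initial interval $J \subseteq M$ and a terminal interval $J' \subseteq N$. Two facts will be used repeatedly. First, a p-chunk of a p-chunk of a word $Z$ is again a p-chunk of $Z$ (restricting a pure decomposition to a subinterval of its $p$-index merges no pieces); consequently $\prod_{\nu \in J'} Y_\nu$ is a p-chunk of an element of $X^{\pm 1}$, and every single pure factor occurring in a p-chunk of an element of $X^{\pm 1}$ is itself a pure p-chunk of an element of $X^{\pm 1}$. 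Second, the inductive hypothesis applied to $V'$ furnishes nonempty intervals $K_0, \dots, K_\ell$ with $M = \prod_{i=0}^{\ell} K_i$, each satisfying (a) or (b) of (ii); since $J$ is an initial interval of $M$, the nonempty intervals among $K_0 \cap J, \dots, K_\ell \cap J$ form an initial run of the $K_i$ together with an initial subinterval of one more, and by the first fact a truncated interval of type (b) still satisfies (b), while a type (a) singleton is kept or deleted.

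Finally split on the two conclusions of Lemma~\ref{pchunkmultiplication}. In case (i) we have $V \equiv_p (\prod_{\mu \in J} U_\mu)(\prod_{\nu \in J'} Y_\nu)$, and we take for the $I_i$ the nonempty intervals among $K_0 \cap J, \dots, K_\ell \cap J$, followed, if $J' \neq \emptyset$, by $J'$; by the two facts each of these satisfies (a) or (b). In case (ii) we have $V \equiv_p (\prod_{\mu \in J} U_\mu)\, R\, (\prod_{\nu \in J'} Y_\nu)$ with $R \equiv \Red(U_{\mu_0} Y_{\nu_1}) \not\equiv E$ pure, where $\mu_0 \in M$ is least above $J$ and $\nu_1 \in N$ is greatest below $J'$, as provided by the lemma. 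Now $U_{\mu_0}$ lies in some $K_i$: if $K_i$ satisfies (a) then $U_{\mu_0}$ is already a reduction of finitely many pure p-chunks of elements of $X^{\pm 1}$; if $K_i$ satisfies (b) then $U_{\mu_0}$, being a single pure factor of a p-chunk of an element of $X^{\pm 1}$, is a pure p-chunk of such an element. Either way $U_{\mu_0}$ and $Y_{\nu_1}$ are reductions of finitely many pure p-chunks of elements of $X^{\pm 1}$, so by Lemma~\ref{reducedconsequence} the pure word $R$ is too, and hence the singleton of $p^*(V)$ that $R$ occupies satisfies (a). We then take for the $I_i$ the nonempty intervals among $K_0 \cap J, \dots, K_\ell \cap J$, then the singleton carrying $R$, then (if nonempty) $J'$. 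In both cases finitely many intervals satisfying (a) or (b) exhaust $p^*(V)$, completing the induction.

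The step I expect to be the main obstacle is case (ii): one must recognize that the single ``merge'' piece $R$ produced by Lemma~\ref{pchunkmultiplication} --- assembled from a boundary factor $U_{\mu_0}$ of $V'$, which the inductive hypothesis describes only as a reduction of finitely many pure p-chunks, glued to a boundary factor $Y_{\nu_1}$ of $Y_m$ --- recombines via Lemma~\ref{reducedconsequence} into a single pure word still of the required kind. The supporting observations that p-chunks of p-chunks are p-chunks and that truncating a type (b) interval keeps it type (b) are routine but carry the essential bookkeeping.
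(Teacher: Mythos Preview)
The paper does not actually prove this lemma; it quotes it verbatim from \cite[Lemma 3.2]{Cor2} and supplies no argument of its own.  There is therefore nothing in the present paper to compare your proof against.

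That said, your argument is correct and is the natural one: induct on the length of a product of p-chunks expressing $V$, and at the inductive step invoke Lemma~\ref{pchunkmultiplication} to describe $\Red(V'Y_m)$.  The two substantive points are exactly the ones you isolate.  First, restricting the inherited decomposition $K_0,\dots,K_\ell$ of $p^*(V')$ to the surviving initial interval $J$ preserves both types, since a subinterval of a type~(b) interval is still a p-chunk of the same element of $X^{\pm 1}$ and a type~(a) singleton is either kept intact or dropped.  Second, in case~(ii) the merged pure piece $R\equiv\Red(U_{\mu_0}Y_{\nu_1})$ is itself of type~(a): $Y_{\nu_1}$ is a pure p-chunk of an element of $X^{\pm 1}$ because $Y_m$ is, and $U_{\mu_0}$ is a reduction of finitely many pure p-chunks of elements of $X^{\pm 1}$ whether the $K_i$ containing $\mu_0$ was of type~(a) or of type~(b).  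This is presumably how the proof in \cite{Cor2} goes as well.

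One cosmetic remark: you began by stipulating that each $Y_j$ is nonempty, so the sentence ``If $Y_m\equiv E$ then $V\equiv V'$'' is superfluous and can be deleted.
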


A subgroup $G \leq \Red_{a, b}$ is \emph{p-fine} if each p-chunk $V$ of each $W \in G$ is also an element of $G$.

\begin{lemma}\label{fine}\cite[Lemma 3.3]{Cor2}  If $X \subseteq \Red_{a, b}$ then the subgroup $\langle \bigcup_{W \in X} \pchunk(W)\rangle \leq \Red_{a, b}$ is p-fine.  This is the smallest p-fine subgroup including the set $X$.
\end{lemma}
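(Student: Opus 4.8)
The plan is to derive this from Lemma~\ref{elementsofthegeneratedsubgroup}. Write $G = \langle \bigcup_{W \in X}\pchunk(W)\rangle$. The first thing I would record is the (purely formal) inversion duality for $p$-chunks: if $W \in \Red_{a,b}$ then the $p$-decomposition of $W^{-1}$ is obtained from that of $W$ by reversing the order of the factors (each being pure, hence still pure after inversion), so $\pchunk(W^{-1}) = \{V^{-1} \mid V \in \pchunk(W)\}$. In particular every $p$-chunk, and a fortiori every pure $p$-chunk, of every element of $X^{\pm 1}$ lies in $G$, since it is a $p$-chunk of an element of $X$ or the inverse of one.

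Next I would show $G$ is $p$-fine. Let $V \in G$ be nonempty with $p$-decomposition $V \equiv_p \prod_{\lambda \in \Lambda} W_{\lambda}$, and let $V' \equiv V \upharpoonright_p J$ be a $p$-chunk of $V$ for an interval $J \subseteq \Lambda$; we may assume $J \neq \emptyset$, the empty case being trivial. Apply Lemma~\ref{elementsofthegeneratedsubgroup} to obtain nonempty intervals $I_0, \dots, I_n$ with $\Lambda = \prod_{i=0}^n I_i$ and each $I_i$ of type (a) or (b) in that lemma. Since $J$ is a nonempty interval of $\prod_{i=0}^n I_i$, there are indices $i_0 \leq i_1$ with $J = \prod_{i=i_0}^{i_1}(J \cap I_i)$, where $J \cap I_i = I_i$ for $i_0 < i < i_1$, while $J \cap I_{i_0}$ is a terminal subinterval of $I_{i_0}$ and $J \cap I_{i_1}$ an initial subinterval of $I_{i_1}$ (with the obvious modification when $i_0 = i_1$). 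Thus
$$V' \equiv \prod_{i=i_0}^{i_1}\left(\prod_{\lambda \in J \cap I_i} W_{\lambda}\right)$$
as words, and as $V'$ is a subword of the reduced word $V$ it is reduced, so the right-hand side equals the product in $\Red_{a,b}$ of its factors. It therefore suffices to place each factor $\prod_{\lambda \in J \cap I_i}W_{\lambda}$ in $G$. If $I_i$ is of type (a), say $I_i = \{\lambda\}$ with $W_{\lambda}$ the reduction of a finite concatenation of pure $p$-chunks of elements of $X^{\pm 1}$, then each such pure $p$-chunk lies in $G$ by the first paragraph, so $W_{\lambda} \in G$, and the factor is $E$ or $W_{\lambda}$. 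If $I_i$ is of type (b), then $\prod_{\lambda \in I_i}W_{\lambda} \equiv Y \upharpoonright_p K$ for some $Y \in X^{\pm 1}$ and interval $K \subseteq p^*(Y)$; under the order isomorphism $I_i \to K$ the subinterval $J \cap I_i$ corresponds to a subinterval $K' \subseteq K$, so $\prod_{\lambda \in J \cap I_i}W_{\lambda} \equiv Y \upharpoonright_p K'$ is a $p$-chunk of $Y$, hence in $G$. This proves $G$ is $p$-fine.

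Finally, minimality: $G \supseteq X$ since each $W \in X$ is the $p$-chunk of itself over the full interval $p^*(W)$; and if $H$ is any $p$-fine subgroup containing $X$, then $H$ contains every $p$-chunk of every element of $X$, hence the generating set of $G$, hence $G \leq H$. I expect the only real obstacle to be the bookkeeping in the $p$-fineness step --- checking that restricting the $p$-decomposition of $V$ to $J$ and regrouping along the $I_i$ yields factors that are genuinely $p$-chunks of elements of $X^{\pm 1}$ (up to the inversion duality), and that, since a $p$-chunk of a reduced word is reduced, word concatenation of these factors coincides with their group product. Everything else is routine.
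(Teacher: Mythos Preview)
Your proof is correct. The paper does not give its own proof of this lemma, citing instead \cite[Lemma 3.3]{Cor2}; your derivation from Lemma~\ref{elementsofthegeneratedsubgroup} (itself \cite[Lemma 3.2]{Cor2}) is the natural route and all the bookkeeping checks out, including the key point that the $p$-decomposition of a $p$-chunk $V\upharpoonright_p I_i$ is again indexed by $I_i$, so the $\equiv$ with $Y\upharpoonright_p K$ induces an order isomorphism $I_i \to K$ carrying subintervals to subintervals.
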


For a subset $X \subseteq \Red_{a, b}$ we let $\Pfine(X)$ denote the smallest p-fine subgroup of $\Red_{a, b}$ which includes $X$.

We conclude this subsection by defining the analogous decomposition for words in $\Red_c$ and stating the accompanying lemmas.  For a word $U \in \Red_c$ we decompose $\overline{U} = \prod_{\theta \in \Theta} I _{\theta}$ where each $I_{\theta}$ is a maximal nonempty interval in $\overline{U}$ such that $U \upharpoonright I_{\theta}$ is pure.  This decomposition is clearly unique and we call it the \emph{pure decomposition of the domain of $U$}.  The accompanying decomposition $U \equiv \prod_{\theta \in \Theta} U_{\theta}$ is the $p$-decomposition of $U$, we will generally write $p^*(U)$ for the (uniquely defined up to order isomorphism) index $\Theta$, and write $U \equiv_p \prod_{\theta \in p^*(U)} U_{\theta}$ to say that the expressed decomposition is the $p$-decomposition.  Define $U\upharpoonright_p I$ for an interval $I \subseteq p^*(U)$ in the same way as before.  For $U, U' \in \Red_c$ we'll say $U'$ is a p-chunk of $U$ if for some interval $I \subseteq p^*(U)$ we have $U' \equiv U \upharpoonright_p I$.  Clearly if $U \equiv_p \prod_{\theta \in p^*(U)} U_{\theta}$ then each $U_{\theta}$ is finite.

Write $\pchunk(U)$ for the set of all p-chunks of $U$. A subgroup $G \leq \Red_c$ is \emph{p-fine} if each p-chunk of an element of $G$ is again an element of $G$.  The following lemmas follow in the same way as their analogues above.

\begin{lemma}\label{pchunkmultiplicationC}  Suppose that $U, U' \in \Red_{c}$ with $U \equiv_p \prod_{\theta \in \Theta} U_{\theta}$ and $U' \equiv_p \prod_{\theta' \in \Theta'} U_{\theta'}'$.  Then there exists a (possibly empty) initial interval $I \subseteq \Theta$, a (possibly empty) terminal interval $I' \subseteq \Theta'$ such that either

\begin{enumerate}[(i)] \item $\Red(UU') \equiv_p \prod_{\theta \in I}U_{\theta}\prod_{\theta'\in I'}U_{\theta'}'$; or

\item there exist $\theta_0\in \Theta$ which is the least element strictly above all elements in $I$, $\theta_1\in \Theta'$ which is the greatest element strictly below all elements of $I'$ and 

\begin{center}
$\Red(UU') \equiv_p (\prod_{\theta \in I}U_{\theta})V(\prod_{\theta'\in I'}U_{\theta'}')$ 
\end{center}

\noindent where $V \equiv \Red(U_{\theta_0}U_{\theta_1}')\not\equiv E$ is pure.
\end{enumerate}

\end{lemma}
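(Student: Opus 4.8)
The plan is to replay the proof of Lemma~\ref{pchunkmultiplication} (that is, \cite[Lemma 3.1]{Cor2}) in the present, notationally simpler, setting, where ``pure'' merely means ``a power $c_n^z$ of a single $c_n$'', so that two adjacent pure subwords of a reduced word merge into a single $p$-block precisely when they carry the same subscript. First I would apply Lemma~\ref{reduced} to the reduced words $U, U'$, obtaining $U \equiv U_0U_1$, $U' \equiv U'_0U'_1$ with $U_1 \equiv (U'_0)^{-1}$ and $U_0U'_1$ reduced, whence $\Red(UU') \equiv U_0U'_1$. Since $\overline{U_0}$ is an initial segment of $\overline U = \prod_{\theta\in\Theta}\overline{U_\theta}$, the set $I := \{\theta\in\Theta : \overline{U_\theta}\subseteq\overline{U_0}\}$ is an initial interval and $U_0 \equiv (\prod_{\theta\in I}U_\theta)P$, where $P$ is a pure word which is empty whenever $\Theta\setminus I$ has no least element and otherwise is a proper initial subword (possibly empty) of $U_{\theta_0}$ for $\theta_0 := \min(\Theta\setminus I)$. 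Dually, $U'_1 \equiv P'(\prod_{\theta'\in I'}U'_{\theta'})$ for a terminal interval $I'\subseteq\Theta'$ and a pure word $P'$ which, when $\theta_1 := \max(\Theta'\setminus I')$ exists, is a proper terminal subword of $U'_{\theta_1}$. Thus $\Red(UU') \equiv (\prod_{\theta\in I}U_\theta)\,PP'\,(\prod_{\theta'\in I'}U'_{\theta'})$, and it remains only to read off the $p$-decomposition of the right-hand side.

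The heart of the matter is to see that $PP'$ is pure and to identify the pure piece $V$ of conclusion~(ii). If the cancellation is trivial, $U_1 \equiv U'_0 \equiv E$, then $P \equiv P' \equiv E$ and $I = \Theta$, $I' = \Theta'$; the displayed concatenation is already a $p$-decomposition unless $\Theta$ has a greatest element $\mu$ and $\Theta'$ a least element $\mu'$ with $U_\mu, U'_{\mu'}$ of equal subscript, and in that case one removes $\mu, \mu'$ from $I, I'$ and takes $V := \Red(U_\mu U'_{\mu'})$, which is nonempty since it occurs as a subword of the reduced word $\Red(UU')$ --- this is conclusion~(ii). If the cancellation is nontrivial then $\theta_0$ and $\theta_1$ are both defined; writing $U_{\theta_0} \equiv PQ$ and $U'_{\theta_1} \equiv RP'$ with $Q, R$ nonempty, the identity $U_1 \equiv (U'_0)^{-1}$ forces $Q \equiv R^{-1}$ (compare maximal pure initial subwords: $Q$ is that of $U_1$ and $R^{-1}$ that of $(U'_0)^{-1}$), so $U_{\theta_0}$ and $U'_{\theta_1}$ share a subscript, $PP'$ is pure, and $V := \Red(U_{\theta_0}U'_{\theta_1}) \equiv \Red(PQRP') \equiv \Red(PP') \equiv PP'$, the middle step since $R \equiv Q^{-1}$ and the last since $PP'$ is a subword of the reduced word $\Red(UU')$. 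If $PP' \not\equiv E$ this is conclusion~(ii); if $PP' \equiv E$ then $P \equiv P' \equiv E$ (reducedness of $\Red(UU')$ precludes a pure subword $c_n^pc_n^{p'}$ with $p, p'$ nonzero of opposite sign), and one repeats the trivial-cancellation analysis verbatim with $I, I'$ in place of $\Theta, \Theta'$, landing in (i) or (ii).

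I expect the only real difficulty to be orderly bookkeeping rather than any substantive point: one must locate the intervals $I, I'$ and the indices $\theta_0, \theta_1$ correctly even when $\Theta$, $\Theta'$, or their relevant sub-intervals lack greatest or least elements --- here it helps that a partial pure block near the junction on one side is forced, via $U_1 \equiv (U'_0)^{-1}$ and comparison of maximal pure initial subwords, to match a genuine least or greatest index on the other side. One must also invoke repeatedly that every subword of a reduced word is reduced, which is exactly what keeps the candidate $V$ from collapsing to $E$ in conclusion~(ii) and guarantees that $PP'$ is already reduced. With these points attended to, the four combinations of ``cut at a block boundary'' versus ``cut strictly inside a block'' on the two sides collapse to the two stated alternatives, precisely as in \cite[Lemma 3.1]{Cor2}.
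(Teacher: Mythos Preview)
Your proposal is correct and is exactly the paper's approach: the paper gives no proof, simply noting that the lemma follows in the same way as its analogue Lemma~\ref{pchunkmultiplication} (i.e., \cite[Lemma~3.1]{Cor2}), which is precisely what you replay. One small wrinkle: your claim that nontrivial cancellation forces $\theta_0$ and $\theta_1$ to be defined is not literally true (if $U_1$ is nonempty but has no first letter then $\Theta\setminus I$ has no minimum), but in that event the cut lies at a block boundary so $P\equiv P'\equiv E$ and your ``trivial-cancellation analysis'' applies directly --- an issue you already anticipate in your final paragraph.
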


\begin{lemma}\label{elementsofthegeneratedsubgroupC}  Suppose that $X \subseteq \Red_{c}$.  For each nonempty element $V$ of the generated subgroup $\langle \bigcup_{U \in X}\pchunk(U) \rangle \leq \Red_{c}$ if $V \equiv_p \prod_{\theta \in \Theta} U_{\theta}$ then there exist nonempty intervals $I_0, \ldots, I_n$ in $\Theta$ such that

\begin{enumerate}[(i)]

\item $\Theta = \prod_{i = 0}^n I_i$; and

\item for each  $0 \leq i \leq n$ at least one of the following holds:

\begin{enumerate}[(a)]

\item $I_i$ is a singleton $\{\theta\}$ such that $U_{\theta}$ is the reduction of a finite concatenation of pure p-chunks of elements in $X^{\pm 1}$;

\item $\prod_{\theta \in I_i} U_{\theta}$ is a p-chunk of some element in $X^{\pm 1}$.

\end{enumerate}

\end{enumerate}
\end{lemma}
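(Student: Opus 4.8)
The final statement to prove is Lemma~\ref{elementsofthegeneratedsubgroupC}, the analogue for $\Red_c$ of Lemma~\ref{elementsofthegeneratedsubgroup}. The plan is to mirror the proof of the $\Red_{a,b}$ case, which the paper attributes to \cite[Lemma~3.2]{Cor2}, exploiting the fact that the only structural inputs used are Lemma~\ref{pchunkmultiplicationC} (how $p$-decompositions behave under multiplication) and the closure of the relevant set under the p-chunk operation. First I would set $Y = \bigcup_{U \in X}\pchunk(U)$ and note that $Y$ is closed under taking p-chunks (a p-chunk of a p-chunk is a p-chunk) and under inverses up to sign, so it suffices to understand arbitrary finite products $V \equiv \Red(Y_1 Y_2 \cdots Y_m)$ with each $Y_j \in Y^{\pm 1}$. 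The argument is an induction on $m$: for $m = 1$ the word $V$ is itself a p-chunk of an element of $X^{\pm1}$, so conclusion (ii)(b) holds with a single interval $I_0 = \Theta$.

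For the inductive step I would assume the conclusion holds for the reduced product $V' \equiv \Red(Y_1 \cdots Y_{m-1})$, giving a partition $p^*(V') = \prod_{i=0}^{n'} I_i'$ of the required form, and then analyze $V \equiv \Red(V' Y_m)$ using Lemma~\ref{pchunkmultiplicationC}. That lemma tells us $p^*(V)$ decomposes as an initial interval of $p^*(V')$, possibly a single ``merged'' pure letter $V_0 \equiv \Red(U_{\theta_0} U'_{\theta_1})$ formed from the last surviving block of $V'$ and the first surviving block of $Y_m$, followed by a terminal interval of $p^*(Y_m)$. The surviving initial part of $p^*(V')$ inherits all but possibly the last of the intervals $I_i'$ (with the last one truncated to an interval, hence still either a singleton of the required type or a p-chunk of an element of $X^{\pm1}$, since a sub-interval of a p-chunk is a p-chunk); the surviving terminal part of $p^*(Y_m)$ is an interval in the p-index of a single p-chunk of an element of $X^{\pm1}$, hence again a p-chunk of an element of $X^{\pm1}$, contributing one more interval of type (ii)(b); and the merged block, if present, is a singleton whose word is $\Red$ of a concatenation of two pure p-chunks, hence of type (ii)(a). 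Concatenating these pieces gives a partition of $p^*(V)$ into finitely many intervals each of the required type, completing the induction.

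The one point requiring a little care — and the closest thing to an obstacle — is bounding the \emph{number} of intervals uniformly, or rather confirming it stays finite at each stage: each multiplication by $Y_m$ adds at most one new interval (the terminal interval of $p^*(Y_m)$) plus at most one ``merge'' that replaces a boundary interval, so after $m$ multiplications the count is at most linear in $m$ and in particular finite, which is all the statement demands. A second routine check is that the various intervals produced are genuinely intervals in the sense of convex subsets and that the boundary truncations of type-(ii)(a) singletons remain type-(ii)(a): a truncation of a singleton is either that singleton or empty, so nothing breaks, and empty intervals may simply be discarded. Since these are precisely the verifications carried out in \cite[Lemma~3.2]{Cor2} with the alphabet $AB$ replaced by $C$ throughout, and since Lemmas~\ref{pchunkmultiplicationC} stands in for Lemma~\ref{pchunkmultiplication}, I would present the proof as ``identical to that of Lemma~\ref{elementsofthegeneratedsubgroup}, replacing references to Lemma~\ref{pchunkmultiplication} by references to Lemma~\ref{pchunkmultiplicationC},'' spelling out only the inductive skeleton above if a self-contained argument is wanted.
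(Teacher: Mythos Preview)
Your proposal is correct and matches the paper's approach: the paper gives no explicit proof of Lemma~\ref{elementsofthegeneratedsubgroupC}, stating only that it ``follow[s] in the same way'' as its analogue Lemma~\ref{elementsofthegeneratedsubgroup}, which is in turn cited from \cite[Lemma~3.2]{Cor2}. Your inductive skeleton via Lemma~\ref{pchunkmultiplicationC} is exactly the argument one transcribes from that source, and your handling of the boundary cases (truncated intervals, the merged singleton, discarding empties) is accurate; the only minor imprecision is calling the merged block a reduction of ``two pure p-chunks'' when $U_{\theta_0}$ may itself already be a reduction of several, but the conclusion that the result is of type (ii)(a) is unaffected.
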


A subgroup $G \leq \Red_{c}$ is \emph{p-fine} if each p-chunk $V$ of each $U \in G$ is also an element of $G$.

\begin{lemma}\label{fineC}  If $X \subseteq \Red_{c}$ then the subgroup $\langle \bigcup_{U \in X} \pchunk(V)\rangle \leq \Red_{c}$ is p-fine.  This is the smallest p-fine subgroup including the set $X$.
\end{lemma}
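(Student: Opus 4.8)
The plan is to transcribe the proof of the analogous Lemma \ref{fine}, with Lemma \ref{elementsofthegeneratedsubgroupC} as the workhorse. Write $G = \langle \bigcup_{U \in X}\pchunk(U)\rangle \leq \Red_c$. First I would establish two elementary closure facts about p-chunks in $\Red_c$: (i) a p-chunk of a p-chunk of $U$ is again a p-chunk of $U$; and (ii) a p-chunk of $U^{-1}$ is the inverse of a p-chunk of $U$. For (i) the point is that if $W' \equiv U \upharpoonright_p I$ for an interval $I \subseteq p^*(U)$ and $U \equiv_p \prod_{\theta \in p^*(U)} U_\theta$, then $\prod_{\theta \in I} U_\theta$ is already the $p$-decomposition of $W'$: restricting to $I$ cannot merge two adjacent blocks $U_\theta, U_{\theta'}$ into one pure block, since in a $p$-decomposition adjacent blocks carry distinct subscripts. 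Hence $p^*(W') = I$ and every p-chunk of $W'$ has the form $U \upharpoonright_p I'$ with $I' \subseteq I$. Fact (ii) holds because $p^*(U^{-1})$ is the order-reversal of $p^*(U)$. Together these show that every p-chunk of every element of $X^{\pm 1}$ lies in $G$.

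Next I would verify p-fineness. Take a nonempty $V \in G$, write $V \equiv_p \prod_{\theta \in \Theta} U_\theta$, and let $V' \equiv V \upharpoonright_p I$ with $I \subseteq \Theta$. Lemma \ref{elementsofthegeneratedsubgroupC} supplies nonempty intervals $I_0, \dots, I_n$ with $\Theta = \prod_{i=0}^{n} I_i$, where each $I_i$ is either (a) a singleton $\{\theta\}$ with $U_\theta$ the reduction of a finite concatenation of pure p-chunks of elements of $X^{\pm 1}$, or (b) such that $\prod_{\theta \in I_i} U_\theta$ is a p-chunk of an element of $X^{\pm 1}$. Since each $I \cap I_i$ is an interval, $I = \prod_{i=0}^{n}(I \cap I_i)$, so $V' \equiv \prod_{i=0}^{n} V \upharpoonright_p (I \cap I_i)$ as words; this concatenation is reduced because $V'$ is, hence $V'$ is the group product of the factors $V \upharpoonright_p (I \cap I_i)$ under the operation $W * U = \Red(WU)$. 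It thus suffices that each factor lie in $G$. In case (a), $I \cap I_i$ is empty or $\{\theta\}$, and in the latter case $U_\theta$ is the reduction of a concatenation of pure p-chunks of elements of $X^{\pm 1}$, hence (by Lemma \ref{reducedconsequence}) a group product of elements of $G$, hence in $G$. In case (b), $V \upharpoonright_p (I \cap I_i)$ is a p-chunk of the p-chunk $\prod_{\theta \in I_i} U_\theta$ of an element of $X^{\pm 1}$, hence a p-chunk of an element of $X^{\pm 1}$ by fact (i), hence in $G$. Therefore $V' \in G$, and $G$ is p-fine.

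Finally, $G \supseteq X$ because each $U \in X$ is the p-chunk $U \upharpoonright_p p^*(U)$ of itself, and if $H$ is any p-fine subgroup with $X \subseteq H$ then p-fineness forces $\pchunk(U) \subseteq H$ for every $U \in X$, so $H \supseteq \langle \bigcup_{U \in X}\pchunk(U)\rangle = G$; thus $G = \Pfine(X)$. I expect the only mildly delicate point to be fact (i) — that passing to a sub-interval of a $p$-index never produces a new, longer pure block — but in $\Red_c$ purity is recorded by a single subscript, so adjacent blocks of a $p$-decomposition always differ there and no merging can occur. Everything else is a routine rewriting of the $\Red_{a,b}$ argument behind Lemma \ref{fine}, with Lemmas \ref{pchunkmultiplication}, \ref{elementsofthegeneratedsubgroup} replaced by Lemmas \ref{pchunkmultiplicationC}, \ref{elementsofthegeneratedsubgroupC}.
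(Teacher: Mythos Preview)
Your proposal is correct and matches the paper's approach: the paper gives no proof of Lemma~\ref{fineC}, stating only that it ``follow[s] in the same way as [its] analogue above'' (Lemma~\ref{fine}, itself quoted from \cite{Cor2}), and your argument is precisely the natural transcription of that analogue using Lemma~\ref{elementsofthegeneratedsubgroupC} in place of Lemma~\ref{elementsofthegeneratedsubgroup}. The closure facts (i) and (ii) and the decomposition of $I$ into the intervals $I\cap I_i$ are exactly what is needed.
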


Write $\Pfine(X)$ for the smallest p-fine subgroup of $\Red_c$ which includes the set $X \subseteq \Red_c$.

\end{subsection}

\begin{subsection}{Close subsets}\label{closesubsets}

Now we recount a useful concept which will be applied throughout the rest of the paper.

\begin{definition}  Given a totally ordered set $\Lambda$, we say $\Lambda_0 \subseteq \Lambda$ is \emph{close in} $\Lambda$, denoted $\Close(\Lambda_0, \Lambda)$, provided every infinite interval in $\Lambda$ has nonempty intersection with $\Lambda_0$. 

\end{definition}

For example, a subset of the set $\omega$ of natural numbers is close in $\omega$ if and only if it is infinite.  A subset of $\mathbb{Q}$ is close if and only if it is dense.

\begin{lemma}\label{basiccloseproperties}\cite[Lemma 3.6]{Cor2}  The following hold:

\begin{enumerate}[(i)]

\item  If $\Close(\Lambda_0, \Lambda)$ then for any infinite interval $I \subseteq \Lambda$ the set $I \cap \Lambda_0$ is infinite.

\item  If $\Lambda_2 \subseteq \Lambda_1 \subseteq \Lambda_0$ with $\Close(\Lambda_{i+1}, \Lambda_i)$  for $i = 0, 1$, then $\Close(\Lambda_2, \Lambda_0)$.

\item  If $\Lambda \equiv \prod_{\theta \in \Theta} \Lambda_{\theta}$, $\Close(\Lambda_{\theta, 0}, \Lambda_{\theta})$ for each $\theta \in \Theta$, and $\Close(\{\theta \in \Theta \mid \Lambda_{\theta, 0} \neq \emptyset\}, \Theta)$ then $\Close(\bigcup_{\theta \in \Theta} \Lambda_{\theta, 0}, \Lambda)$.

\item  If  $I_0$ is an interval in $\Lambda$ and $\Close(\Lambda_0, \Lambda)$ then $\Close(\Lambda_0 \cap I_0, I_0)$

\end{enumerate}

\end{lemma}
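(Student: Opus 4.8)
The plan is to deduce all four assertions straight from the definition of closeness, the common device being: given an infinite interval $J$ of the relevant ``target'' order, produce a point of the relevant subset inside $J$. Two elementary facts about convexity will be used throughout, and I would establish them at the outset. First, if $\Lambda'' \subseteq \Lambda'$ is an interval of $\Lambda'$ and $\Lambda' \subseteq \Lambda$ is an interval of $\Lambda$, then $\Lambda''$ is an interval of $\Lambda$. Second, a slight strengthening of the definition: if $\Close(X,\Lambda)$ and $X \subseteq \Lambda' \subseteq \Lambda$, then $\Close(X,\Lambda')$; indeed an infinite interval $J$ of $\Lambda'$ equals $\Lambda' \cap J'$, where $J'$ is the convex hull of $J$ in $\Lambda$, so $J' \cap X$ is nonempty (it is an infinite interval of $\Lambda$) and lies in $\Lambda' \cap J' = J$.

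For (i) I would argue by contradiction. If $I$ is an infinite interval and $I \cap \Lambda_0 = \{x_1 < \cdots < x_n\}$ is finite, write $I$ as the union of this finite set with the (finitely many) maximal subintervals of $I$ avoiding $\{x_1,\dots,x_n\}$; each such subinterval is an interval of $\Lambda$ disjoint from $\Lambda_0$, and since $I$ is infinite one of them is infinite, contradicting $\Close(\Lambda_0,\Lambda)$. Assertion (iv) is then immediate from the first convexity fact: an infinite interval $J \subseteq I_0$ is an infinite interval of $\Lambda$, hence meets $\Lambda_0$, and as $J \subseteq I_0$ this point lies in $\Lambda_0 \cap I_0$. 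For (ii), given an infinite interval $I$ of $\Lambda_0$, apply $\Close(\Lambda_1,\Lambda_0)$ together with (i) to see that $I \cap \Lambda_1$ is infinite; it is an interval of $\Lambda_1$, so $\Close(\Lambda_2,\Lambda_1)$ yields a point of $\Lambda_2$ in $I \cap \Lambda_1 \subseteq I$.

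Assertion (iii) needs the most care. First, discarding the blocks with $\Lambda_\theta = \emptyset$ and invoking the strengthening above, I may assume every $\Lambda_\theta$ is nonempty. Let $I$ be an infinite interval of $\Lambda \equiv \prod_{\theta \in \Theta}\Lambda_\theta$. If $I \cap \Lambda_\theta$ is infinite for some $\theta$, it is an infinite interval of $\Lambda_\theta$, and $\Close(\Lambda_{\theta,0},\Lambda_\theta)$ gives a point of $\Lambda_{\theta,0}$ in $I$. Otherwise every $I \cap \Lambda_\theta$ is finite, so $\Theta_I := \{\theta \in \Theta : I \cap \Lambda_\theta \neq \emptyset\}$ is infinite; it is an interval of $\Theta$, and any $\theta \in \Theta_I$ that is not its least or greatest element has $\Lambda_\theta \subseteq I$ (a block lying strictly between two blocks met by $I$ is swallowed by convexity). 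Hence $\Theta_I^{\circ} := \{\theta \in \Theta_I : \Lambda_\theta \subseteq I\}$ is again an infinite interval of $\Theta$, and $\Close(\{\theta : \Lambda_{\theta,0} \neq \emptyset\},\Theta)$ produces some $\theta^{*} \in \Theta_I^{\circ}$ with $\Lambda_{\theta^{*},0} \neq \emptyset$; then $\emptyset \neq \Lambda_{\theta^{*},0} = \Lambda_{\theta^{*},0} \cap I \subseteq I \cap \bigcup_{\theta \in \Theta}\Lambda_{\theta,0}$.

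I expect the only genuine obstacle to be the bookkeeping in (iii): correctly treating empty blocks and the (at most two) extreme indices of $\Theta_I$. Once the two convexity observations are in place this becomes routine, and parts (i), (ii), (iv) are straightforward.
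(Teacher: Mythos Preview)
The paper does not actually prove this lemma; it is stated with a citation to \cite[Lemma 3.6]{Cor2} and no argument is given here. Your proof is correct and self-contained: parts (i), (ii), and (iv) are handled cleanly from the definition, and your treatment of (iii)---reducing to nonempty blocks via the auxiliary ``strengthening,'' then splitting on whether some $I\cap\Lambda_\theta$ is infinite---is sound, including the observation that $\Theta_I^\circ$ is itself an interval (any $\theta$ strictly between two elements of $\Theta_I^\circ$ is a non-extreme element of $\Theta_I$, hence has $\Lambda_\theta\subseteq I$). Since the paper offers nothing to compare against, there is no divergence of approach to report.
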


We give some more notation.  If $\Close(\Lambda_0, \Lambda)$ then for an interval $I \subseteq \Lambda$ we let $\varpropto(I, \Lambda_0)$ denote the smallest interval in $\Lambda$ which includes the set $I \cap \Lambda_0$.  In other words $$\varpropto(I, \Lambda_0) =  \bigcup\{[\lambda_0, \lambda_1] \mid \lambda_0, \lambda_1 \in I \cap \Lambda_0\}.$$

\begin{lemma}\label{prettyclose}\cite[Lemma 3.7]{Cor2} Let $\Close(\Lambda_0, \Lambda)$ and $I \subseteq \Lambda$ be an interval.

\begin{enumerate}[(i)]

\item The inclusion $I \supseteq \varpropto(I, \Lambda_0)$ holds and $\varpropto(I, \Lambda_0) = \varpropto(\varpropto(I, \Lambda_0), \Lambda_0)$.

\item  The set $I \setminus \varpropto(I, \Lambda_0)$ is the disjoint union of an initial and terminal subinterval $I_0, I_1 \subseteq I$ (either subinterval could be empty) with $|I_0|, |I_1| < \infty$.
\end{enumerate}

\end{lemma}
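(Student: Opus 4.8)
The plan is to unwind the definition $\varpropto(I,\Lambda_0) = \bigcup\{[\lambda_0,\lambda_1]\mid \lambda_0,\lambda_1\in I\cap\Lambda_0\}$ as ``the convex hull of $I\cap\Lambda_0$ inside $\Lambda$'' and then exploit two facts: that $I$ is itself convex, and that closeness of $\Lambda_0$ forbids an infinite interval of $\Lambda$ from missing $\Lambda_0$.

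For part (i) I would first check that $\varpropto(I,\Lambda_0)$ is genuinely an interval of $\Lambda$. If $x\le y\le z$ with $x,z\in\varpropto(I,\Lambda_0)$, pick $\lambda_0,\lambda_1\in I\cap\Lambda_0$ with $x\in[\lambda_0,\lambda_1]$ and $\mu_0,\mu_1\in I\cap\Lambda_0$ with $z\in[\mu_0,\mu_1]$; then $\lambda_0\le x\le y\le z\le\mu_1$, so $y\in[\lambda_0,\mu_1]\subseteq\varpropto(I,\Lambda_0)$. Next, since each defining segment $[\lambda_0,\lambda_1]$ with $\lambda_0,\lambda_1\in I\cap\Lambda_0$ is contained in $I$ by convexity of $I$, the union is contained in $I$, which gives $I\supseteq\varpropto(I,\Lambda_0)$; moreover $\varpropto(I,\Lambda_0)$ is visibly the smallest interval of $\Lambda$ containing $I\cap\Lambda_0$. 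For idempotence, set $J=\varpropto(I,\Lambda_0)$. From $I\cap\Lambda_0\subseteq J\subseteq I$ we get $J\cap\Lambda_0 = I\cap\Lambda_0$, and since the value of $\varpropto(\,\cdot\,,\Lambda_0)$ depends only on this intersection, $\varpropto(J,\Lambda_0)=\varpropto(I,\Lambda_0)=J$.

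For part (ii), write $J=\varpropto(I,\Lambda_0)$, a subinterval of the interval $I$. Let $I_0$ be the set of points of $I$ strictly below every point of $J$ and $I_1$ the set of points of $I$ strictly above every point of $J$. Since $J$ is convex, any point of $I\setminus J$ lies in exactly one of $I_0,I_1$, so $I\setminus J = I_0\sqcup I_1$; and $I_0$ is an initial subinterval of $I$ while $I_1$ is a terminal one. (If $J=\emptyset$ then $I\cap\Lambda_0=\emptyset$, so by $\Close(\Lambda_0,\Lambda)$ the interval $I$ is finite and we may simply take $I_0=I$, $I_1=\emptyset$.) Finally, if $I_0$ were infinite it would be an infinite interval of $\Lambda$, hence by $\Close(\Lambda_0,\Lambda)$ would meet $\Lambda_0$ in some $\mu$; but then $\mu\in I\cap\Lambda_0\subseteq J$, contradicting that every element of $I_0$ lies strictly below every element of $J$. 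So $|I_0|<\infty$, and symmetrically $|I_1|<\infty$.

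There is no serious obstacle here: the statement is essentially order-theoretic bookkeeping. The only mild points requiring attention are the convexity verification for $\varpropto(I,\Lambda_0)$ and the degenerate case $I\cap\Lambda_0=\emptyset$; the finiteness claims in (ii) are immediate from the definition of closeness (equivalently, from Lemma \ref{basiccloseproperties}).
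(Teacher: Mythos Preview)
Your argument is correct: the convex-hull description of $\varpropto(I,\Lambda_0)$, the observation $J\cap\Lambda_0=I\cap\Lambda_0$ for idempotence, and the closeness-based finiteness of $I_0,I_1$ all go through as written, with the degenerate case $I\cap\Lambda_0=\emptyset$ handled appropriately. The paper itself does not supply a proof of this lemma but simply cites it from \cite[Lemma 3.7]{Cor2}, so there is no in-paper argument to compare against; your direct order-theoretic verification is exactly the kind of routine check the citation is standing in for.
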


We'll say totally ordered sets $\Lambda$ and $\Theta$ are \emph{close-isomorphic} if there exist $\Lambda_0 \subseteq \Lambda$ and $\Theta_0 \subseteq \Theta$ with $\Close(\Lambda_0, \Lambda)$, $\Close(\Theta_0, \Theta)$ and $\Lambda_0$ order isomorphic to $\Theta_0$; and if $\iota$ is an order isomorphism between such a $\Lambda_0$ and $\Theta_0$ then we will call $\iota$ a \emph{close order isomorphism from $\Lambda$ to $\Theta$}.  Evidently the inverse of a close order isomorphism from $\Lambda$ to $\Theta$ is a close order isomorphism from $\Theta$ to $\Lambda$.  We'll abbreviate close order isomorphism with the expression \emph{coi}.

Given coi $\iota$ between $\Lambda$ and $\Theta$, with $\iota: \Lambda_0 \rightarrow \Theta_0$, and an interval $I \subseteq \Lambda$ we let $\varpropto(I, \iota)$ denote the smallest interval in $\Theta$ which includes the set $\iota(I \cap \Lambda_0)$.  In other words, $\varpropto(I, \iota) = \bigcup \{[\theta_0, \theta_1]\mid \theta_0, \theta_1 \in \iota(I \cap \Lambda_0)\}$, where each interval $[\theta_0, \theta_1]$ is being considered in $\Theta$.

\begin{lemma}\label{almostidentified}\cite[Lemma 3.8]{Cor2}  If $\iota: \Lambda_0 \rightarrow \Theta_0$ is a coi between $\Lambda$ and $\Theta$ and $I \subseteq \Lambda$ is an interval then $\varpropto(\varpropto(I, \iota), \iota^{-1}) = \varpropto(I, \Lambda_0)$.
\end{lemma}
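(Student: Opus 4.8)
The plan is to reduce everything to a single identity relating the intervals in question to the ``core'' sets $\Lambda_0$ and $\Theta_0$ on which $\iota$ is an honest order isomorphism. Concretely, I would first establish the claim that
$$\varpropto(I, \iota) \cap \Theta_0 = \iota(I \cap \Lambda_0).$$
Granting this, the lemma follows in one line: applying $\iota^{-1}$ gives $\iota^{-1}\bigl(\varpropto(I,\iota)\cap\Theta_0\bigr) = I\cap\Lambda_0$, and by definition $\varpropto(\varpropto(I,\iota),\iota^{-1})$ is the smallest interval of $\Lambda$ containing $\iota^{-1}(\varpropto(I,\iota)\cap\Theta_0)$, hence the smallest interval of $\Lambda$ containing $I\cap\Lambda_0$, which is precisely $\varpropto(I,\Lambda_0)$. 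The degenerate case $I\cap\Lambda_0=\emptyset$ needs no separate treatment, since then all sets in sight are empty and the claim reads $\emptyset=\emptyset$.

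For the claim, the inclusion $\supseteq$ is essentially definitional: $\varpropto(I,\iota)$ is an interval of $\Theta$ containing $\iota(I\cap\Lambda_0)$, and $\iota(I\cap\Lambda_0)\subseteq\Theta_0$, so no point of $\iota(I\cap\Lambda_0)$ is lost upon intersecting with $\Theta_0$. For $\subseteq$, take $\theta\in\varpropto(I,\iota)\cap\Theta_0$. Using the explicit description $\varpropto(I,\iota)=\bigcup\{[\theta_0,\theta_1]\mid\theta_0,\theta_1\in\iota(I\cap\Lambda_0)\}$, pick $\lambda_0,\lambda_1\in I\cap\Lambda_0$ with $\iota(\lambda_0)\le\theta\le\iota(\lambda_1)$; since $\theta\in\Theta_0$ we may write $\theta=\iota(\lambda)$ with $\lambda\in\Lambda_0$. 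Because $\iota$ is an order isomorphism between $\Lambda_0$ and $\Theta_0$ (each carrying the order inherited from $\Lambda$, resp.\ $\Theta$), the relations $\iota(\lambda_0)\le\iota(\lambda)\le\iota(\lambda_1)$ give $\lambda_0\le\lambda\le\lambda_1$ in $\Lambda$; convexity of the interval $I$ together with $\lambda_0,\lambda_1\in I$ then forces $\lambda\in I$, so $\lambda\in I\cap\Lambda_0$ and $\theta=\iota(\lambda)\in\iota(I\cap\Lambda_0)$, as required.

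I do not anticipate a genuine obstacle; the one step deserving care is the order-reflection passage from $\iota(\lambda_0)\le\theta\le\iota(\lambda_1)$ back to $\lambda_0\le\lambda\le\lambda_1$, which is valid precisely because $\theta$, and not merely the endpoints, lies in $\Theta_0$ --- the set on which $\iota^{-1}$ is order preserving. An alternative, slightly longer route would first record (via Lemma \ref{prettyclose}(i)) the auxiliary facts $\varpropto(I,\Lambda_0)\cap\Lambda_0=I\cap\Lambda_0$ and its $\Theta$-analogue and then chain inclusions in both directions, but the direct argument above seems cleanest.
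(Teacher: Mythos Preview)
Your argument is correct. The key identity $\varpropto(I,\iota)\cap\Theta_0=\iota(I\cap\Lambda_0)$ is exactly what is needed, and your verification of both inclusions is sound; in particular the order-reflection step is justified precisely as you say, because all three points $\iota(\lambda_0),\theta,\iota(\lambda_1)$ lie in $\Theta_0$, where $\iota^{-1}$ is order preserving.

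As for comparison: the paper does not actually prove this lemma here but defers to \cite[Lemma~3.8]{Cor2}, so there is no in-text argument to compare against. Your self-contained proof is entirely adequate and, indeed, is the natural one-paragraph verification one would expect for a statement of this kind.
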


Note that a coi $\iota$ between $\Lambda$ and $\Theta$ also induces a coi between the reversed orders $\Lambda^{-1}$ and $\Theta^{-1}$ in the obvious way.

\begin{lemma}\label{coilemma}\cite[Lemma 3.9]{Cor2}  Let $\Lambda \equiv I_0 \cdots I_n$ and $\iota: \Lambda_0 \rightarrow \Theta_0$ a coi from $\Lambda$ to $\Theta$.  Then there exist (possibly empty) finite subintervals $I_0', \ldots, I_{n+1}'$ of $\varpropto(I, \iota)$ such that 

\begin{center}

$\varpropto(\Lambda, \iota) \equiv I_0'\varpropto(I_0, \iota) I_1' \varpropto(I_1, \iota) I_2' \cdots \varpropto(I_n, \iota) I_{n+1}'$.

\end{center}
\end{lemma}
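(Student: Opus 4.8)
\emph{Proof strategy.} The plan is to identify $\varpropto(\Lambda,\iota)$ with the convex hull in $\Theta$ of the images of the blocks $I_0,\dots,I_n$, and to observe that the error terms $I_j'$ are exactly the ``gaps'' between consecutive images, which closeness forces to be finite. Write $\iota\colon\Lambda_0\to\Theta_0$ with $\Close(\Lambda_0,\Lambda)$ and $\Close(\Theta_0,\Theta)$. Since $\Lambda\equiv I_0\cdots I_n$, each element of $I_j$ lies below each element of $I_{j+1}$, so by order preservation the same holds for the images $\iota(I_j\cap\Lambda_0)$; from the definition of $\varpropto$ it then follows that $\varpropto(I_0,\iota),\dots,\varpropto(I_n,\iota)$ are pairwise disjoint intervals of $\Theta$ occurring in $\Theta$ in this order. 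Their union contains $\bigcup_j\iota(I_j\cap\Lambda_0)=\iota(\Lambda_0)=\Theta_0$ and is contained in $\varpropto(\Lambda,\iota)$, and since $\varpropto(\Lambda,\iota)$ is by definition the convex hull of $\Theta_0$ in $\Theta$, taking convex hulls throughout shows $\varpropto(\Lambda,\iota)$ equals the convex hull of $\bigcup_j\varpropto(I_j,\iota)$ — that is, the concatenation of the intervals $\varpropto(I_j,\iota)$ with the finitely many maximal subintervals of $\varpropto(\Lambda,\iota)$ lying strictly between consecutive ones. Any such gap interval $G$ is an interval of $\Theta$ disjoint from $\bigcup_j\varpropto(I_j,\iota)\supseteq\Theta_0$, so $\Close(\Theta_0,\Theta)$ prevents it from being infinite; that single observation is the only real content, the rest being bookkeeping about how to name the gaps.

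To keep the bookkeeping honest — empty $\varpropto(I_j,\iota)$ make the naive assignment of gaps to the slots $I_0',\dots,I_{n+1}'$ ambiguous — I would induct on $n$. For $n=0$ take $I_0'=I_1'=\emptyset$. For $n\ge 1$ put $J=I_1\cdots I_n$, so $\Lambda\equiv I_0J$. By Lemma~\ref{basiccloseproperties}(iv) we have $\Close(J\cap\Lambda_0,J)$, and the restriction $\iota_J$ of $\iota$ to $J\cap\Lambda_0$ is an order isomorphism onto $\iota(J\cap\Lambda_0)$; moreover $\iota(J\cap\Lambda_0)$ is close in $\varpropto(J,\iota)$, because an infinite subinterval of $\varpropto(J,\iota)$ is an infinite interval of $\Theta$, hence meets $\Theta_0$, yet is disjoint from $\varpropto(I_0,\iota)\supseteq\iota(I_0\cap\Lambda_0)$ and so meets the complementary part $\iota(J\cap\Lambda_0)$ of $\Theta_0$. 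Thus $\iota_J$ is a coi from $J$ to $\varpropto(J,\iota)$. Since the convex hull in $\Theta$ of $\iota(I_k\cap\Lambda_0)$ already lies inside $\varpropto(J,\iota)$ for $1\le k\le n$, one checks $\varpropto(I_k,\iota_J)=\varpropto(I_k,\iota)$ and $\varpropto(J,\iota_J)=\varpropto(J,\iota)$, so the inductive hypothesis applied to $\iota_J$ and $J\equiv I_1\cdots I_n$ produces finite subintervals $J_1,\dots,J_{n+1}$ of $\varpropto(J,\iota)$ with $\varpropto(J,\iota)\equiv J_1\varpropto(I_1,\iota)J_2\cdots\varpropto(I_n,\iota)J_{n+1}$. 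Finally $\varpropto(\Lambda,\iota)$ is the convex hull of the disjoint, correctly ordered pair $\varpropto(I_0,\iota),\varpropto(J,\iota)$, hence equals $\varpropto(I_0,\iota)\,G\,\varpropto(J,\iota)$ for the interval $G$ strictly between them, which is finite by the observation above; setting $I_0'=\emptyset$, $I_1'=GJ_1$, and $I_j'=J_j$ for $2\le j\le n+1$ yields the required decomposition, all the $I_j'$ being finite subintervals of $\varpropto(\Lambda,\iota)$. (If $I_0\cap\Lambda_0$ or $J\cap\Lambda_0$ is empty the corresponding $\varpropto$ is empty and the step degenerates trivially.)

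The main difficulty is purely organisational: verifying that restricting a coi to the convex hull of a ``sub-block'' of its image again gives a coi — the delicate point being closeness of the restricted image, which is precisely where disjointness of $\varpropto(I_0,\iota)$ from $\varpropto(J,\iota)$ is used — and then pushing through the convex-hull identifications $\varpropto(I_k,\iota_J)=\varpropto(I_k,\iota)$. The substantive step, finiteness of each gap, is immediate from $\Close(\Theta_0,\Theta)$, exactly as in Lemma~\ref{prettyclose}. One could instead argue directly from the convex-hull picture of the first paragraph and distribute the gaps among the $n+2$ slots by hand; this is equivalent but needs a slightly awkward case analysis when some $\varpropto(I_j,\iota)$ vanish, which the induction above sidesteps.
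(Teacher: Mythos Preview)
The paper does not give its own proof of this lemma; it is quoted from \cite[Lemma 3.9]{Cor2} without argument. Your proof is correct and is the natural one: the only substantive point is that each gap $G$ between consecutive blocks $\varpropto(I_j,\iota)$ inside $\varpropto(\Lambda,\iota)$ is disjoint from $\Theta_0$ (since $\Theta_0=\bigcup_j\iota(I_j\cap\Lambda_0)\subseteq\bigcup_j\varpropto(I_j,\iota)$) and hence finite by $\Close(\Theta_0,\Theta)$, and your induction on $n$ handles the otherwise fiddly case where some $\varpropto(I_j,\iota)$ are empty. The verification that the restriction $\iota_J$ is again a coi, and that $\varpropto(I_k,\iota_J)=\varpropto(I_k,\iota)$ because convex hulls taken in an interval of $\Theta$ agree with convex hulls taken in $\Theta$, is routine and correctly sketched. (Incidentally, the ``$\varpropto(I,\iota)$'' in the lemma statement is a typo for $\varpropto(\Lambda,\iota)$, which you have read correctly.)
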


\begin{lemma}\label{coilemma2}\cite[Lemma 3.10]{Cor2}  Let $\iota: \Lambda_0 \rightarrow \Theta_0$ be a coi from $\Lambda$ to $\Theta$.  If $I \subseteq \Lambda$ is finite then $\varpropto(I, \iota)$ is finite.
\end{lemma}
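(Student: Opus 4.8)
The plan is to unwind the definition of $\varpropto(I, \iota)$ and move finiteness back and forth between $\Lambda$ and $\Theta$ using the closeness hypotheses built into the notion of a coi. If $I \cap \Lambda_0 = \emptyset$ then $\varpropto(I, \iota)$ is the empty interval and there is nothing to prove, so I would assume $I \cap \Lambda_0 \neq \emptyset$. Since $I$ is finite, so is $\iota(I \cap \Lambda_0) \subseteq \Theta_0$; let $\theta_-$ and $\theta_+$ denote its least and greatest elements. The smallest interval of $\Theta$ containing $\iota(I \cap \Lambda_0)$ is then $[\theta_-, \theta_+]$ computed in $\Theta$, so the task reduces to showing that $[\theta_-, \theta_+]$ is finite.

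First I would pull the endpoints back: put $\lambda_\pm = \iota^{-1}(\theta_\pm) \in I \cap \Lambda_0$. Since $I$ is convex in $\Lambda$ and contains both $\lambda_-$ and $\lambda_+$, it contains the interval $[\lambda_-, \lambda_+]$ of $\Lambda$, which is therefore finite; in particular $\Lambda_0 \cap [\lambda_-, \lambda_+]$ is finite. Next, because $\iota$ is an order isomorphism from $\Lambda_0$ onto $\Theta_0$, it restricts to a bijection from $\Lambda_0 \cap [\lambda_-, \lambda_+]$ onto $\Theta_0 \cap [\theta_-, \theta_+]$: a point $\theta \in \Theta_0$ lies in $[\theta_-, \theta_+]$ exactly when $\iota^{-1}(\theta) \in \Lambda_0$ lies in $[\lambda_-, \lambda_+]$. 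Hence $\Theta_0 \cap [\theta_-, \theta_+]$ is finite.

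Finally I would invoke closeness of $\Theta_0$ in $\Theta$: by Lemma \ref{basiccloseproperties}(i) every infinite interval of $\Theta$ meets $\Theta_0$ in an infinite set, so an interval of $\Theta$ meeting $\Theta_0$ in a finite set must itself be finite. Applied to $[\theta_-, \theta_+] = \varpropto(I, \iota)$ this finishes the argument. (One could instead deduce the statement from Lemma \ref{almostidentified}: if $\varpropto(I, \iota)$ were infinite, then $\varpropto$ of that infinite interval under $\iota^{-1}$ would again be infinite by the same closeness lemma, yet it equals $\varpropto(I, \Lambda_0) \subseteq I$, which is finite.) I do not expect a genuine obstacle here; the only points warranting care are the identification of $\varpropto(I, \iota)$ with the closed interval $[\theta_-, \theta_+]$ and the verification that $\iota$ restricts to the stated bijection, after which the closeness lemma does all the work.
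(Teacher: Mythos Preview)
Your argument is correct. The paper does not supply its own proof of this lemma; it simply cites \cite[Lemma 3.10]{Cor2}. Your direct unwinding of the definition---identifying $\varpropto(I,\iota)$ with the closed interval $[\theta_-,\theta_+]$, transporting finiteness of $[\lambda_-,\lambda_+]\cap\Lambda_0$ to $[\theta_-,\theta_+]\cap\Theta_0$ via the order isomorphism $\iota$, and then invoking Lemma~\ref{basiccloseproperties}(i) in contrapositive form---is exactly the kind of elementary check one expects, and your alternative via Lemma~\ref{almostidentified} is also sound.
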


\end{subsection}

\end{section}

\begin{section}{A strategy for constructing an isomorphism}\label{strategysection}

In this section we introduce the fundamental building blocks used to construct the desired isomorphism.  If $W \in \Red_{a, b}$ and $U \in \Red_c$ we write $\coi(W, \iota, U)$ to say that $\iota$ is a coi between $p^*(W)$ and $p^*(U)$ (by abuse of language we'll sometimes say that $\iota$ is a coi from $W$ to $U$).  Now we give a long definition.

\begin{definition}  A collection $\{\coi(W_x, \iota_x, U_x)\}_{x\in X}$ of coi triples is \emph{coherent} if for any choice of $x_0, x_1 \in X$, intervals $I_0 \subseteq \pindex(W_{x_0})$ and $I_1 \subseteq \pindex(W_{x_1})$ and $i \in \{-1, 1\}$ such that $W_{x_0} \upharpoonright_p I_0 \equiv (W_{x_1}\upharpoonright_p I_1)^i$ we get

\begin{center}

$[[U_{x_0}\upharpoonright_p \varpropto(I_0, \iota_{x_0})]] = [[(U_{x_1}\upharpoonright_p \varpropto(I_1, \iota_{x_1}))^i]]$

\end{center}
 
\noindent and similarly for any choice of $x_2, x_3 \in X$, intervals $I_2 \subseteq \pchunk(U_{x_2})$ and $I_3 \subseteq \pchunk(U_{x_3})$ and $j \in \{-1, 1\}$ such that $U_{x_2} \upharpoonright_p I_2 \equiv (U_{x_3} \upharpoonright_p I_3)^j$ we get

\begin{center}

$[[W_{x_2} \upharpoonright_p \varpropto(I_2, \iota_{x_2}^{-1})]] = [[(W_{x_3} \upharpoonright_p \varpropto(I_3, \iota_{x_3}^{-1}))^j]]$.

\end{center}

\end{definition}

\noindent The symmetric nature of this definition will be key in producing the isomorphism.  Note that a word can appear multiple times in a coherent collection.  For example, if each element of $\{W_x\}_{x \in X}$ is pure then the collection $\{(W_x, \iota_x, E)\}_{x \in X}$ is obviously coherent (taking each $\iota_x$ to be the empty function).  We shall also see that it is rather tedious to check that a collection of coi is coherent.  As an illustration, if the collection $\{\coi(W, \iota, U)\}$ has only one element, and $p^*(W)$ is order isomorphic to $\mathbb{Q}$, then one must consider uncountably many intervals, both in $p^*(W)$ and in $p^*(U)$, in verifying that this rather small collection is coherent.

We spend the remainder of this section verifying the utility of a coherent collection, giving some technical lemmas.

\begin{lemma}\label{gettingstarted}  Let $\{\coi(W_x, \iota_x, U_x)\}_{x\in X}$ be coherent and $x\in X$.

\begin{enumerate}

\item  Let $I \subseteq \pindex(W_x)$ be an interval and $I = I_0I_1 \cdots I_n$ where for each $0 \leq r \leq n$ we have an $x_r \in X$, an interval $I_r'$ in $\pindex(W_{x_j})$ and $i_r \in \{-1, 1\}$ such that $W_x \upharpoonright_p I_r \equiv (W_{x_r} \upharpoonright_p I_r')^{i_r}$.  Then

\begin{center}  $[[U_x \upharpoonright_p \varpropto(I, \iota_x)]] = \prod_{r = 0}^n [[(U_{x_r} \upharpoonright_p \varpropto(I_r', \iota_{x_r}))^{i_r}]]$
\end{center}

\noindent and letting $L = \{0 \leq r \leq n \mid |I_r| > 1\}$ we have

\begin{center}  $[[U_x \upharpoonright_p \varpropto(I, \iota_x)]] = \prod_{r \in L} [[(U_{x_r} \upharpoonright_p \varpropto(I_r', \iota_{x_r}))^{i_r}]]$.

\end{center}

\item  Let $I \subseteq \pindex(U_x)$ be an interval and $I = I_0I_1 \cdots I_n$ where for each $0 \leq r \leq n$ we have an $x_r \in X$, an interval $I_r'$ in $\pindex(U_{x_r})$ and $j_r \in \{-1, 1\}$ such that $U_x \upharpoonright_p I_r \equiv (W_{x_r} \upharpoonright_p I_r')^{j_r}$.  Then

\begin{center}  $[[W_x \upharpoonright_p \varpropto(I, \iota_x^{-1})]] = \prod_{r = 0}^n [[(W_{x_r} \upharpoonright_p \varpropto(I_r', \iota_{x_r}^{-1}))^{j_r}]]$
\end{center}

\noindent and letting $L = \{0 \leq r \leq n \mid |I_r| > 1\}$ we have

\begin{center}  $[[W_x \upharpoonright_p \varpropto(I, \iota_x^{-1})]] = \prod_{r \in L} [[(W_{x_r} \upharpoonright_p \varpropto(I_r', \iota_{x_r}^{-1}))^{j_r}]]$.

\end{center}

\end{enumerate}

\end{lemma}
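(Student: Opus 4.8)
The plan is to prove part (1) by a straightforward induction on $n$, the number of pieces in the decomposition $I = I_0 I_1 \cdots I_n$; part (2) is entirely symmetric (interchanging the roles of $W$'s and $U$'s and replacing $\iota_x$ by $\iota_x^{-1}$), so I would only write part (1) in detail and remark that (2) follows by symmetry. For the base case $n = 0$, the hypothesis says $W_x \upharpoonright_p I_0 \equiv (W_{x_0} \upharpoonright_p I_0')^{i_0}$ with $I_0 = I$, and the desired equality $[[U_x \upharpoonright_p \varpropto(I, \iota_x)]] = [[(U_{x_0} \upharpoonright_p \varpropto(I_0', \iota_{x_0}))^{i_0}]]$ is exactly an instance of the coherence hypothesis (the first clause of the definition, applied with $x_0 = x$, $x_1 = x_0$).

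For the inductive step, I would split $I = I' I_n$ where $I' = I_0 \cdots I_{n-1}$ is an initial subinterval of $I$ and $I_n$ is the final terminal subinterval. The key geometric fact I need is that $\varpropto$ distributes over such a finite concatenation up to finite error — more precisely, that $U_x \upharpoonright_p \varpropto(I, \iota_x)$ can be written as a concatenation $(U_x \upharpoonright_p \varpropto(I', \iota_x)) \cdot V \cdot (U_x \upharpoonright_p \varpropto(I_n, \iota_x))$ where $V$ is the (finite) $\varpropto(\cdot, \iota_x)$-image of the finite gap between $I'$ and $I_n$; this is essentially Lemma \ref{coilemma} (with the subdivision into two pieces), using that the gap intervals $I_j'$ appearing there are finite by Lemma \ref{coilemma2}. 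Since $V$ is finite it is a finite reduced word in $\Red_c$, hence (being supported on finitely many subscripts after further decomposing into its pure $p$-chunks, each of which is of the form $c_m^z$ and so lies in $\Pure_c$) we get $[[V]] = $ identity in $\Red_c/\langle\langle \Pure_c \rangle\rangle$. Applying $[[\cdot]]$ to the concatenation and using that $[[\cdot]]$ is a homomorphism (composed with the homomorphism $\beth_c$, after reducing concatenations via Lemma \ref{reducedconsequence}/\ref{cancellationreduces}), the middle term drops out and the induction hypothesis applied to $I'$ together with the $n=0$ case applied to $I_n$ finishes the first displayed equality. The second displayed equality then follows because each $I_r$ with $|I_r| = 1$ is a single $p$-chunk on which $W_x$ is pure, so $W_{x_r} \upharpoonright_p I_r'$ is pure, hence $U_{x_r} \upharpoonright_p \varpropto(I_r', \iota_{x_r})$ is a finite (by Lemma \ref{coilemma2}, since a singleton is finite) word whose pure $p$-chunks all lie in $\Pure_c$, so its $[[\cdot]]$-image is trivial and it can be deleted from the product.

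I expect the main obstacle to be the bookkeeping in the inductive step: verifying carefully that the finite ``gap'' words $V$ arising from Lemma \ref{coilemma} really do have trivial image under $\beth_c$ — this needs the observation that a finite word in $\Red_c$, decomposed into its pure $p$-chunks, is a product of finitely many elements of $\Pure_c$, hence lies in $\langle\langle \Pure_c \rangle\rangle$ — and that the concatenation identities interact correctly with reduction, so that $[[U_x \upharpoonright_p \varpropto(I, \iota_x)]]$ genuinely equals the product of the $[[\cdot]]$-images of the three pieces. One must also be slightly careful that $\varpropto(I', \iota_x)$ and $\varpropto(I_n, \iota_x)$ and the gap together tile $\varpropto(I, \iota_x)$ as an honest concatenation of intervals in $p^*(U_x)$; this is where Lemma \ref{coilemma} (in the two-block form $\Lambda \equiv I' I_n$) is invoked, and where one uses that $\varpropto$ applied to the whole of $I$ is the union of the relevant pieces. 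Everything else is a routine unwinding of definitions, and the symmetry of the coherence definition makes part (2) immediate once part (1) is in hand.
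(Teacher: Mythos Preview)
Your proposal is correct and follows essentially the same approach the paper indicates: the paper's proof is simply a pointer to \cite[Lemma 3.13]{Cor2} together with the remark that Lemma~\ref{coilemma} is the key ingredient, and that is exactly the lemma you invoke to control the finite ``gap'' intervals between the $\varpropto(I_r,\iota_x)$. Two small remarks: first, the finiteness of the gap intervals is already part of the statement of Lemma~\ref{coilemma} (you cite Lemma~\ref{coilemma2} for this, which is not needed there, though it is needed, as you correctly say, for the singleton case $|I_r|=1$); second, rather than inducting and peeling off one $I_n$ at a time, it is marginally cleaner to apply Lemma~\ref{coilemma} once to the full decomposition $I = I_0\cdots I_n$, obtaining $\varpropto(I,\iota_x) = J_0'\,\varpropto(I_0,\iota_x)\,J_1'\cdots\varpropto(I_n,\iota_x)\,J_{n+1}'$ with each $J_r'$ finite, and then observe in one stroke that $[[U_x\upharpoonright_p J_r']]$ is trivial and that coherence gives $[[U_x\upharpoonright_p\varpropto(I_r,\iota_x)]] = [[(U_{x_r}\upharpoonright_p\varpropto(I_r',\iota_{x_r}))^{i_r}]]$ for each $r$. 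But your inductive version is equivalent and equally valid.
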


\begin{proof}  The proof of each of (1) and (2) utilizes Lemma \ref{coilemma} and is essentially the same as that of \cite[Lemma 3.13]{Cor2}, word for word.
\end{proof}

\begin{definition}\label{commonrefinement}  Suppose that for a nonempty totally ordered set $\Lambda$ we have decompositions $\Lambda = I_0I_1 \cdots I_n$ and $\Lambda = I_0'I_1'\cdots I_k'$ into finitely many nonempty intervals.  We say the second decomposition is a \emph{refinement} of the first provided each $I_i$ is the union of some of the elements in $\{I_s'\}_{s = 0}^{k}$. The \emph{common refinement} of two decompositions $\Lambda = I_0I_1 \cdots I_n = I_0'I_1'\cdots I_k'$ is the finite decomposition $\Lambda = I_0''I_1'' \cdots I_{\ell}''$ where each $I_t''$ is a nonempty intersection of an element in $\{I_r\}_{r = 0}^n$ and an element in $\{I_s'\}_{s = 0}^{k}$.
\end{definition}

Note that the common refinement is a refinement of the two decompositions.  Also, if $W \in \Pfine(\{W_x\}_{x \in X})$ and $\pindex(W) = I_0 \cdots I_n$ is a decomposition as in Lemma \ref{elementsofthegeneratedsubgroup}, then any refinement of this decomposition also satisfies the conclusion of Lemma \ref{elementsofthegeneratedsubgroup}.  Similarly for a decomposition $\pindex(U) = I_0 \cdots I_n$ for $U \in\Pfine(\{U_x\}_{x \in X})$ as in Lemma \ref{elementsofthegeneratedsubgroupC}.

\begin{lemma}\label{passtorefinement}  Suppose $\{\coi(W_x, \iota_x, U_x)\}_{x \in X}$ is coherent and $W \in \Pfine(\{W_x\}_{x \in X})$.  Let $I_0, \ldots, I_n$ be a finite set of subintervals of $p^*(W)$ as in Lemma \ref{elementsofthegeneratedsubgroup} and $J = \{0 \leq r \leq n \mid |I_r| > 1\}$.  For each $r \in J$ select an $x_r \in X$, interval $\Lambda_r \subseteq p^*(W_{x_r})$ and $i_r \in \{-1, 1\}$ such that $W \upharpoonright_p I_r \equiv (W_{x_r} \upharpoonright_p \Lambda_r)^{i_r}$.  Let also $p^*(W) = I_0'\cdots I_k'$ be a refinement of the decomposition $p^*(W) = I_0\cdots I_n$ and $J' = \{0 \leq s \leq k \mid |I_s'| > 1\}$.  For each $s \in J'$ select an $x_s' \in X$, interval $\Lambda_s' \subseteq p^*(W_{x_{s}'})$ and $i_s'$ such that $W \upharpoonright_p I_r' \equiv (W_{x_s'} \upharpoonright_p \Lambda_s')^{i_s'}$.  Then

\begin{center}
$\prod_{r \in J}[[(U_{x_r} \upharpoonright_p \varpropto(\Lambda_r, \iota_{x_r}))^{i_r}]] = \prod_{s \in J'}  [[(U_{x_s'}\upharpoonright_p \varpropto(\Lambda_s', \iota_{x_s'}))^{i_s'}]]$.
\end{center}
\end{lemma}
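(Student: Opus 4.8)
The plan is to deduce the identity from Lemma~\ref{gettingstarted}(1) together with the elementary combinatorics of refinements. First I would record the purely order-theoretic content of Definition~\ref{commonrefinement}: since $p^*(W) = I_0'\cdots I_k'$ refines $p^*(W) = I_0\cdots I_n$, each $I_r$ is a concatenation $\prod_{s \in S_r} I_s'$ where $S_r$ is a block of consecutive indices, and the blocks $S_0, \dots, S_n$ partition $\{0, \dots, k\}$ in an order-preserving fashion. A singleton $I_r$ forces $S_r$ to consist of a single index $s$ with $I_s' = I_r$ also a singleton, so no $I_s'$ with $|I_s'| > 1$ lies inside a singleton $I_r$; hence $J' = \bigsqcup_{r \in J} (S_r \cap J')$, the union respecting the order of $\{0,\dots,k\}$. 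Consequently it suffices to prove, for each fixed $r \in J$,
$$[[(U_{x_r} \upharpoonright_p \varpropto(\Lambda_r, \iota_{x_r}))^{i_r}]] = \prod_{s \in S_r \cap J'} [[(U_{x_s'} \upharpoonright_p \varpropto(\Lambda_s', \iota_{x_s'}))^{i_s'}]],$$
with the right side taken in increasing order of $s$, and then multiply these equalities over $r \in J$ in increasing order; since the blocks $S_r\cap J'$ are consecutive and exhaust $J'$ in order, the product assembles into exactly the stated product over $J'$.

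For the fixed-$r$ identity I would transport the decomposition $I_r = \prod_{s \in S_r} I_s'$ across the word identity $W \upharpoonright_p I_r \equiv (W_{x_r} \upharpoonright_p \Lambda_r)^{i_r}$. Because the $p$-decomposition of a word is intrinsic (its maximal pure subwords) and inversion merely reverses it and inverts each pure piece, this identity induces an order isomorphism $I_r \to \Lambda_r$ when $i_r = 1$ and an order-reversing bijection when $i_r = -1$; pulling the block $\{I_s'\}_{s \in S_r}$ back along it yields a finite decomposition $\Lambda_r = \prod_s \Lambda_{r,s}$ into nonempty consecutive subintervals of $p^*(W_{x_r})$ (in increasing $s$ if $i_r = 1$, decreasing $s$ if $i_r = -1$) with $W_{x_r} \upharpoonright_p \Lambda_{r,s} \equiv (W \upharpoonright_p I_s')^{i_r}$, hence, using the chosen representation $W \upharpoonright_p I_s' \equiv (W_{x_s'} \upharpoonright_p \Lambda_s')^{i_s'}$ of the finer piece, $W_{x_r} \upharpoonright_p \Lambda_{r,s} \equiv (W_{x_s'} \upharpoonright_p \Lambda_s')^{i_r i_s'}$. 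Since $\equiv$ preserves the length of the $p$-index, $|\Lambda_{r,s}| = |I_s'|$, so the pieces $\Lambda_{r,s}$ with more than one element are exactly those with $s \in S_r \cap J'$.

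Now I would apply Lemma~\ref{gettingstarted}(1) with ambient word $W_{x_r}$, the decomposition $\Lambda_r = \prod_s \Lambda_{r,s}$, and the data $(x_s', \Lambda_s', i_r i_s')$ indexed by $s$: it yields $[[U_{x_r} \upharpoonright_p \varpropto(\Lambda_r, \iota_{x_r})]] = \prod_{s \in S_r \cap J'} [[(U_{x_s'} \upharpoonright_p \varpropto(\Lambda_s', \iota_{x_s'}))^{i_r i_s'}]]$, the product taken in the order in which the $\Lambda_{r,s}$ occur. Raising both sides to the power $i_r$ and, in the case $i_r = -1$, using that inverting a product reverses the order of its factors and inverts each one, the exponents $i_r i_s'$ collapse to $i_s'$ and the product order becomes increasing in $s$ — this is precisely the fixed-$r$ identity, and assembling over $r$ finishes the proof. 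I expect the one genuinely delicate point to be exactly this orientation bookkeeping for $i_r = -1$: making sure that transporting a left-to-right decomposition across an inversion reverses the block order, and that this reversal is undone upon raising the resulting $[[\cdot]]$-product back to the power $i_r$. Everything else is a direct appeal to Definition~\ref{commonrefinement} and Lemma~\ref{gettingstarted}(1) and, as with the corresponding statements in \cite{Cor2}, is routine.
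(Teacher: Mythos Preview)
Your proposal is correct and follows essentially the same approach as the paper's proof: both define the block $(J')_r = S_r \cap J'$ of refined indices sitting inside each coarse piece $I_r$, establish the fixed-$r$ identity by invoking Lemma~\ref{gettingstarted}(1), and then multiply over $r \in J$. Your treatment is more explicit than the paper's (which handles the $i_r = -1$ orientation issue in a single parenthetical remark about ``words inverted twice''), but the argument is the same.
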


\begin{proof}
For each $r \in J$ let $(J')_r = \{s \in J' \mid I_r \supseteq I_s'\}$.  By Lemma \ref{gettingstarted} (1) we know that

\begin{center}
$[[(U_{x_r} \upharpoonright_p \varpropto(\Lambda_r, \iota_{x_r}))^{i_r}]] = \prod_{s \in (J')_r}[[(U_{x_s'}\upharpoonright_p \varpropto(\Lambda_s', \iota_{x_s'}))^{i_s'}]]$

\end{center}

\noindent for each $r \in J$ (this is clear when $i_r = 1$, and when $i_r = -1$ the words are inverted twice).  Therefore we obtain

$$
\begin{array}{ll}
\prod_{r \in J} [[(U_{x_r} \upharpoonright_p \varpropto(\Lambda_r, \iota_{x_r}))^{i_r}]] & = \prod_{r \in J} \prod_{s \in (J')_r}[[(U_{x_s'}\upharpoonright_p \varpropto(\Lambda_s', \iota_{x_s'}))^{i_s'}]]\\
& =  \prod_{s \in J'} [[(U_{x_s'}\upharpoonright_p \varpropto(\Lambda_s', \iota_{x_s'}))^{i_s'}]].
\end{array}
$$
\end{proof}

Of course we also have the following.

\begin{lemma}\label{passtorefinementbackwards}  Suppose $\{\coi(W_x, \iota_x, U_x)\}_{x \in X}$ is coherent and $U \in \Pfine(\{U_x\}_{x \in X})$  Let $I_0, \ldots, I_n$ be a finite set of subintervals of $p^*(U)$ as in Lemma \ref{elementsofthegeneratedsubgroupC} and $J = \{0 \leq r \leq n \mid |I_r| > 1\}$.  For each $r \in J$ select an $x_r \in X$, interval $\Theta_r \subseteq p^*(U_{x_r})$ and $j_r \in \{-1, 1\}$ such that $U \upharpoonright_p I_r \equiv (U_{x_r} \upharpoonright_p \Theta_r)^{j_r}$.  Let also $p^*(U) = I_0'\cdots I_k'$ be a refinement of the decomposition $p^*(U) = I_0\cdots I_n$ and $J' = \{0 \leq s \leq k \mid |I_s'| > 1\}$.  For each $s \in J'$ select an $x_s' \in X$, interval $\Theta_s' \subseteq p^*(U_{x_s'})$ and $j_s'$ such that $U \upharpoonright_p I_r' \equiv (U_{x_s'} \upharpoonright_p \Theta_s')^{j_s'}$.  Then

\begin{center}
$\prod_{r \in J}[[(W_{x_r} \upharpoonright_p \varpropto(\Theta_r, \iota_{x_r}^{-1}))^{j_r}]] = \prod_{s \in J'}  [[(W_{x_s'}\upharpoonright_p \varpropto(\Theta_s', \iota_{x_s'}^{-1}))^{j_s'}]]$.
\end{center}
\end{lemma}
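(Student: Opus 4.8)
The plan is to mirror the proof of Lemma~\ref{passtorefinement} essentially word for word, interchanging the roles of the words $W_x$ and $U_x$, replacing each application of Lemma~\ref{gettingstarted}(1) by one of Lemma~\ref{gettingstarted}(2), and invoking Lemma~\ref{elementsofthegeneratedsubgroupC} in place of Lemma~\ref{elementsofthegeneratedsubgroup}. Since $p^*(U) = I_0'\cdots I_k'$ is a refinement of $p^*(U) = I_0\cdots I_n$, the remark following Definition~\ref{commonrefinement} guarantees that the finer decomposition again satisfies the conclusion of Lemma~\ref{elementsofthegeneratedsubgroupC}, so the indices $x_s'$, intervals $\Theta_s'$ and signs $j_s'$ named in the hypothesis are genuinely available; no existence claim needs to be made here, since they are given.

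First I would, for each $r \in J$, set $(J')_r = \{s \in J' \mid I_r \supseteq I_s'\}$, which is a subinterval of $\{0,\ldots,k\}$, and observe that $I_r$ is the concatenation of exactly those $I_s'$ with $I_r \supseteq I_s'$, among which the pieces of size $>1$ are precisely the ones indexed by $(J')_r$. Transporting this decomposition of $I_r$ through the equivalence $U \upharpoonright_p I_r \equiv (U_{x_r} \upharpoonright_p \Theta_r)^{j_r}$ exhibits a finite decomposition of the interval $\Theta_r \subseteq p^*(U_{x_r})$ --- and here it is important that $x_r \in X$, so that $U_{x_r}$ is a genuine member of the collection and Lemma~\ref{gettingstarted}(2) applies with $x = x_r$. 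Feeding this decomposition into the second conclusion of Lemma~\ref{gettingstarted}(2) (the one over the pieces of size $>1$, so that the singleton pieces drop out) yields

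\begin{center}
$[[(W_{x_r} \upharpoonright_p \varpropto(\Theta_r, \iota_{x_r}^{-1}))^{j_r}]] = \prod_{s \in (J')_r} [[(W_{x_s'} \upharpoonright_p \varpropto(\Theta_s', \iota_{x_s'}^{-1}))^{j_s'}]]$
\end{center}

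\noindent for every $r \in J$; this is immediate when $j_r = 1$, and when $j_r = -1$ one applies Lemma~\ref{gettingstarted}(2) to the reversed decomposition and then inverts the resulting identity, so that the words are inverted twice.

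Finally I would note that each $s \in J'$ lies in $I_s' \subseteq I_r$ for a unique $r$, and that this $r$ belongs to $J$ because $|I_r| \geq |I_s'| > 1$; hence $J' = \bigsqcup_{r \in J}(J')_r$ as ordered sets. Multiplying the displayed identities over $r \in J$ therefore gives

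\begin{center}
$\prod_{r \in J}[[(W_{x_r} \upharpoonright_p \varpropto(\Theta_r, \iota_{x_r}^{-1}))^{j_r}]] = \prod_{r \in J}\prod_{s \in (J')_r}[[(W_{x_s'}\upharpoonright_p \varpropto(\Theta_s', \iota_{x_s'}^{-1}))^{j_s'}]]$,
\end{center}

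\noindent and the right-hand side is exactly $\prod_{s \in J'}[[(W_{x_s'}\upharpoonright_p \varpropto(\Theta_s', \iota_{x_s'}^{-1}))^{j_s'}]]$, which is the desired equality. I do not anticipate a genuine obstacle: the statement is the exact dual of Lemma~\ref{passtorefinement} under the symmetry built into the definition of a coherent collection, and the only delicate points are bookkeeping ones --- keeping the sub-decompositions indexed over $p^*(U_{x_r})$ for an actual $x_r \in X$ rather than over $p^*(U)$, and checking that the refinement inherits the structure demanded by Lemma~\ref{elementsofthegeneratedsubgroupC}.
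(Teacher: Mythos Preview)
Your proposal is correct and follows exactly the approach the paper takes: the paper's proof consists of the single sentence ``The argument is essentially the same as that of Lemma~\ref{passtorefinement}, using part (2) of Lemma~\ref{gettingstarted} instead of part (1),'' and you have written out precisely that dualization in detail.
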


\begin{proof}
The argument is essentially the same as that of Lemma \ref{passtorefinement}, using part (2) of Lemma \ref{gettingstarted} instead of part (1).
\end{proof}

\begin{lemma}\label{welldefined}
Let $\{\coi(W_x, \iota_x, U_x)\}_{x\in X}$ be coherent.  By selecting for each $W \in \Pfine(\{W_x\}_{x \in X})$ a finite set of subintervals $I_0, \ldots, I_n$ of $\pindex(W)$ as in the conclusion of Lemma \ref{elementsofthegeneratedsubgroup}, letting $J = \{0 \leq r \leq n \mid |I_r| > 1\}$, selecting for each $r \in J$ an element $x_r \in X$, $i_r \in \{-1, 1\}$, and interval $\Lambda_r \subseteq \pindex(W_{x_r})$ such that $W \upharpoonright_p I_r \equiv (W_{x_r} \upharpoonright_p \Lambda_r)^{i_r}$ we obtain a function

\begin{center}
$\phi_0:  \Pfine(\{W_x\}_{x \in X}) \rightarrow \beth_{c}(\Pfine(\{U_x\}_{x \in X}))$
\end{center}

\noindent given by $\phi_0(W) = \prod_{r \in J} [[(U_{x_r} \upharpoonright_p \varpropto(\Lambda_r, \iota_{x_r}))^{i_r}]]$, whose definition is independent of the choices made of the set of subintervals $I_0, \ldots, I_n$, elements $x_r \in X$ and $i_r \in \{-1, 1\}$, and intervals $\Lambda_r \subseteq \pindex(W_{x_r})$.  The comparable map 

\begin{center}

$\phi_1: \Pfine(\{U_x\}_{x \in X}) \rightarrow \beth_{a, b}(\Pfine(\{W_x\}_{x \in X}))$

\end{center}

\noindent is also well-defined (i.e. independent of the various selections made).

\end{lemma}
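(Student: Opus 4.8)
The plan is to reduce the well-definedness of $\phi_0$ entirely to Lemma \ref{passtorefinement}, using the common refinement of Definition \ref{commonrefinement} to bridge two competing systems of choices. First I would dispose of the easy points: that $\phi_0(W)$ is defined by \emph{some} admissible system and that the displayed product names an element of the stated codomain. If $W \equiv E$ then $\pindex(W) = \emptyset$, $J = \emptyset$, and $\phi_0(W)$ is the empty product $[[E]]$, with no choice involved. If $W \not\equiv E$, then $\Pfine(\{W_x\}_{x \in X}) = \langle \bigcup_{x \in X} \pchunk(W_x) \rangle$ by Lemma \ref{fine}, so Lemma \ref{elementsofthegeneratedsubgroup} supplies a decomposition $\pindex(W) = I_0 \cdots I_n$ of the required shape; for each $r$ with $|I_r| > 1$ alternative (a) of that lemma is impossible (it demands a singleton), so (b) holds and $W \upharpoonright_p I_r$ is a p-chunk of some element of $\{W_x\}_{x\in X}^{\pm 1}$, which is exactly the existence of data $(x_r, i_r, \Lambda_r)$ with $W \upharpoonright_p I_r \equiv (W_{x_r} \upharpoonright_p \Lambda_r)^{i_r}$. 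I would note in passing that each $U_{x_r} \upharpoonright_p \varpropto(\Lambda_r, \iota_{x_r})$ is a p-chunk of $U_{x_r}$, hence lies in $\Pfine(\{U_x\}_{x \in X})$, so the finite product indeed lands in the subgroup $\beth_{c}(\Pfine(\{U_x\}_{x \in X}))$.

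The core step is independence of the choices. Given two admissible systems for a fixed $W \not\equiv E$ — a decomposition $\pindex(W) = I_0 \cdots I_n$ with data $(x_r, i_r, \Lambda_r)_{r \in J}$, and a decomposition $\pindex(W) = I_0' \cdots I_k'$ with data $(x_s', i_s', \Lambda_s')_{s \in J'}$, each decomposition as in Lemma \ref{elementsofthegeneratedsubgroup} — I would pass to their common refinement $\pindex(W) = I_0'' \cdots I_\ell''$. This refines each original decomposition and, by the remark following Definition \ref{commonrefinement}, still satisfies the conclusion of Lemma \ref{elementsofthegeneratedsubgroup}; so exactly as in the first paragraph it carries some admissible data $(x_t'', i_t'', \Lambda_t'')_{t \in J''}$. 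Now I apply Lemma \ref{passtorefinement} twice: once to the pair (first decomposition with its data, common refinement with its data), and once to the pair (second decomposition with its data, common refinement with its data). Each application equates the product coming from an original system with the product coming from the common refinement, so the two original products agree. No separate ``same decomposition, different labels'' case is needed, since Lemma \ref{passtorefinement} already permits arbitrary relabelling on both the coarse and the fine side.

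The map $\phi_1$ is then handled by the mirror-image argument: interchange the roles of the alphabets $AB$ and $C$, replace each $\iota_x$ by $\iota_x^{-1}$, and invoke Lemmas \ref{elementsofthegeneratedsubgroupC} and \ref{passtorefinementbackwards} in place of Lemmas \ref{elementsofthegeneratedsubgroup} and \ref{passtorefinement}. The definition of a coherent collection is symmetric in precisely the way this substitution requires — the two halves of the definition, and the two parts of Lemma \ref{gettingstarted}, are interchanged — so nothing genuinely new arises.

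I do not expect a substantial obstacle here: the real content already resides in Lemma \ref{gettingstarted} and Lemma \ref{passtorefinement}, and this lemma is essentially a bookkeeping consequence. The one point deserving a moment's care is the verification that every non-singleton piece of the common refinement admits admissible data; this is where the observation that such a piece cannot be of the singleton type (a) in Lemma \ref{elementsofthegeneratedsubgroup}, hence must be of the p-chunk type (b), is used. Beyond that, the argument is a straightforward double invocation of Lemma \ref{passtorefinement}.
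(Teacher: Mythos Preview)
Your proposal is correct and follows essentially the same approach as the paper's own proof: pass to the common refinement of the two decompositions, equip it with its own admissible data, and apply Lemma \ref{passtorefinement} twice to equate each original product with the product coming from the refinement. You supply a bit more detail than the paper does (the trivial case $W\equiv E$, the verification that the product lands in the stated codomain, and the explicit reason why non-singleton pieces of the refinement carry admissible data), but the argument is the same.
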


\begin{proof}  Take a decomposition $p^*(W) = I_0\cdots I_n$ as in Lemma \ref{elementsofthegeneratedsubgroup}, and choices of $x_r \in X$ and $i_r \in \{-1, 1\}$ and interval $\Lambda_r \subseteq \pindex(W_{x_r})$ for elements in $J = \{0 \leq r \leq n \mid |I_r| > 1\}$ such that $W \upharpoonright_p I_r \equiv (W_{x_r} \upharpoonright_p \Lambda_r)^{i_r}$.  Take a possibly distinct choice of intervals $\pindex(W) = I_0'\cdots I_{n'}'$, and of $x_s' \in X$, of $i_s' \in \{-1, 1\}$, and $\Lambda_s' \subseteq \pindex(W_{x_s'})$ for elements in $J' = \{0 \leq s \leq n' \mid |I_s'| > 1\}$ such that $W \upharpoonright_p I_s' \equiv (W_{x_s'} \upharpoonright_p \Lambda_s')^{i_s'}$.  We pass to the mutual refinement of the decompositions $I_0\cdots I_n$ and $I_0'\cdots I_{n'}'$, say $\pindex(W) = I_0''\cdots I_{n''}''$ and make a third selection of elements of $x_t'' \in X$, exponents $i_t'' \in \{-1, 1\}$, and $\Lambda_t'' \subseteq \pindex(W_{x_t})$ satisfying $W\upharpoonright_p I_t'' \equiv (W_{x_t} \upharpoonright_p \Lambda_t'')^{i_t''}$ for elements of $J'' = \{0 \leq t \leq n'' \mid |I_t''| > 1\}$.  By applying Lemma \ref{passtorefinement} twice we have

$$
\begin{array}{ll}
\prod_{r \in J} [[(U_{x_r} \upharpoonright_p \varpropto(\Lambda_r, \iota_{x_r}))^{i_r}]]
& =  \prod_{t \in J''} [[(U_{x_t''}\upharpoonright_p \varpropto(\Lambda_t'', \iota_{x_t''}))^{i_t''}]]\\
& = \prod_{s \in J'} [[(U_{x_s} \upharpoonright_p \varpropto(\Lambda_s', \iota_{x_r}))^{i_s'}]]
\end{array}
$$

\noindent and so the function $\phi_0$ is well-defined.  The check of the well-definedness of the function $\phi_1$ is entirely analogous.
\end{proof}

\begin{lemma}\label{homomorphism}  The functions $\phi_0$ and $\phi_1$ from Lemma \ref{welldefined} are homomorphisms.  If $\Pfine(\{W_x\}_{x \in X}) = \Red_{a, b}$ and $\Pfine(\{U_x\}_{x \in X}) = \Red_c$ then $\phi_0$ and $\phi_1$ descend to isomorphisms

\begin{center}
$\Phi_0: \Red_{a, b}/\langle\langle \Pure_{a, b}\rangle\rangle \rightarrow \Red_c/\langle\langle \Pure_c \rangle\rangle$
\end{center}

\noindent and

\begin{center}
$\Phi_1: \Red_c/\langle\langle \Pure_c \rangle\rangle \rightarrow \Red_{a, b}/\langle\langle \Pure_{a, b}\rangle\rangle$
\end{center}

\noindent such that $\Phi_0$ is the inverse of $\Phi_1$. 
\end{lemma}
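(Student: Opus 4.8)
The plan is to deduce everything from three ingredients: an elementary \emph{inverse lemma}, the pure‑decomposition multiplication rule (Lemma~\ref{pchunkmultiplication}, resp.\ Lemma~\ref{pchunkmultiplicationC}), and the double‑$\varpropto$ identity (Lemma~\ref{almostidentified}). First I would record the inverse lemma: for $W\in\Pfine(\{W_x\}_{x\in X})$ one has $\phi_0(W^{-1})=\phi_0(W)^{-1}$ (and likewise for $\phi_1$), obtained by reversing the decomposition of $p^*(W)$ furnished by Lemma~\ref{elementsofthegeneratedsubgroup} and using that a coi induces a coi on the reversed orders, so $\varpropto(\Lambda_r,\iota_{x_r})$ just reverses. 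I would also note that $\phi_0$ and $\phi_1$ send every pure word to the trivial class, since a pure reduced word has a one‑point $p$‑index (whence $J=\emptyset$).

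For the homomorphism property, fix $W,W'\in\Pfine(\{W_x\}_{x\in X})$ and apply Lemma~\ref{pchunkmultiplication} to obtain an initial interval $I\subseteq p^*(W)$, a terminal interval $I'\subseteq p^*(W')$, and (in the nondegenerate case) $\lambda_0=\min(p^*(W)\setminus I)$, $\lambda_1=\max(p^*(W')\setminus I')$ with $\Red(WW')\equiv_p(W\upharpoonright_p I)\,V\,(W'\upharpoonright_p I')$, where $V\equiv\Red(W_{\lambda_0}W'_{\lambda_1})\not\equiv E$ is pure. Writing $\mathcal B=(p^*(W)\setminus I)\setminus\{\lambda_0\}$ and $\mathcal C=(p^*(W')\setminus I')\setminus\{\lambda_1\}$, so $W\equiv(W\upharpoonright_p I)W_{\lambda_0}(W\upharpoonright_p\mathcal B)$ and $W'\equiv(W'\upharpoonright_p\mathcal C)W'_{\lambda_1}(W'\upharpoonright_p I')$, the crucial point extracted from Lemma~\ref{pchunkmultiplication} together with the uniqueness clause of Lemma~\ref{reduced} is that $W\upharpoonright_p\mathcal B\equiv(W'\upharpoonright_p\mathcal C)^{-1}$: these two $p$‑chunks are exactly what the reduction of $WW'$ consumes against one another. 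I would then choose the decompositions of $p^*(W)$ and $p^*(W')$ guaranteed by Lemma~\ref{elementsofthegeneratedsubgroup}, refined (legitimately, since type‑(b) blocks split into $p$‑chunks and type‑(a) blocks are singletons) so that $I$, $\{\lambda_0\}$, $\mathcal B$ are unions of blocks of the first, and $\mathcal C$, $\{\lambda_1\}$, $I'$ of the second; pulling these through Lemma~\ref{pchunkmultiplication} gives the decomposition $[I\text{-blocks}]\,\{\mu_V\}\,[I'\text{-blocks}]$ of $p^*(\Red(WW'))$, with $\{\mu_V\}$ the (pure, hence type‑(a)) singleton of $V$, and I reuse the same choices on the $I$‑ and $I'$‑blocks. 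Feeding these into the formula of Lemma~\ref{welldefined} and discarding the singleton blocks $\{\lambda_0\},\{\lambda_1\},\{\mu_V\}$ yields
\[
\phi_0(W)=P\,Q,\qquad \phi_0(W')=R\,S,\qquad \phi_0(\Red(WW'))=P\,S,
\]
where $P=\phi_0(W\upharpoonright_p I)$, $S=\phi_0(W'\upharpoonright_p I')$, $Q=\phi_0(W\upharpoonright_p\mathcal B)$, $R=\phi_0(W'\upharpoonright_p\mathcal C)$; by the inverse lemma and $W\upharpoonright_p\mathcal B\equiv(W'\upharpoonright_p\mathcal C)^{-1}$ we get $QR=[[E]]$, hence $\phi_0(\Red(WW'))=\phi_0(W)\phi_0(W')$. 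The degenerate case (i) of Lemma~\ref{pchunkmultiplication} is just the sub‑case with $V,\lambda_0,\lambda_1$ absent, and the argument for $\phi_1$ is verbatim with Lemma~\ref{pchunkmultiplicationC}.

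Now assume $\Pfine(\{W_x\}_{x\in X})=\Red_{a,b}$ and $\Pfine(\{U_x\}_{x\in X})=\Red_c$. Being homomorphisms that kill the pure words, $\phi_0$ and $\phi_1$ kill the normal closures $\langle\langle\Pure_{a,b}\rangle\rangle$ and $\langle\langle\Pure_c\rangle\rangle$, so they descend to homomorphisms $\Phi_0,\Phi_1$ between the quotients with $\Phi_0\circ\beth_{a,b}=\phi_0$ and $\Phi_1\circ\beth_c=\phi_1$. It remains to check $\Phi_1\circ\Phi_0=\mathrm{id}$ (the identity $\Phi_0\circ\Phi_1=\mathrm{id}$ is symmetric). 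Fix $W\in\Red_{a,b}$ with a decomposition $p^*(W)=I_0\cdots I_n$, $J=\{r:|I_r|>1\}$ and choices $(x_r,i_r,\Lambda_r)$ as in Lemma~\ref{welldefined}, and let $\widehat W\in\Red_c$ be a reduced representative of $\Red\big(\prod_{r\in J}(U_{x_r}\upharpoonright_p\varpropto(\Lambda_r,\iota_{x_r}))^{i_r}\big)$, so $\beth_c(\widehat W)=\phi_0(W)$. Since $\phi_1$ is a homomorphism and each $U_{x_r}\upharpoonright_p\varpropto(\Lambda_r,\iota_{x_r})$ is a single $p$‑chunk of $U_{x_r}$, evaluating $\phi_1$ factorwise by the one‑block decomposition and using the inverse lemma and Lemma~\ref{almostidentified} (so $\varpropto(\varpropto(\Lambda_r,\iota_{x_r}),\iota_{x_r}^{-1})=\varpropto(\Lambda_r,\dom\iota_{x_r})$) gives $\phi_1(\widehat W)=\prod_{r\in J}[[(W_{x_r}\upharpoonright_p\varpropto(\Lambda_r,\dom\iota_{x_r}))^{i_r}]]$. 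By Lemma~\ref{prettyclose}(ii) the set $\Lambda_r\setminus\varpropto(\Lambda_r,\dom\iota_{x_r})$ is finite, so $W_{x_r}\upharpoonright_p\varpropto(\Lambda_r,\dom\iota_{x_r})$ differs from $W_{x_r}\upharpoonright_p\Lambda_r$ by finitely many pure $p$‑chunks at the ends, whence $[[W_{x_r}\upharpoonright_p\varpropto(\Lambda_r,\dom\iota_{x_r})]]=[[W_{x_r}\upharpoonright_p\Lambda_r]]$ and the product equals $\prod_{r\in J}[[W\upharpoonright_p I_r]]$. For $r\notin J$ the block $I_r$ is a singleton, so $W\upharpoonright_p I_r$ is pure and contributes trivially; hence $\phi_1(\widehat W)=\prod_{r=0}^n[[W\upharpoonright_p I_r]]=[[W]]=\beth_{a,b}(W)$. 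Thus $\Phi_1\circ\Phi_0\circ\beth_{a,b}=\beth_{a,b}$ and, $\beth_{a,b}$ being onto, $\Phi_1\circ\Phi_0=\mathrm{id}$; the reverse composite is handled identically with $\varpropto(\varpropto(\Theta,\iota^{-1}),\iota)=\varpropto(\Theta,\ran\iota)$. So $\Phi_0$ and $\Phi_1$ are mutually inverse isomorphisms.

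I expect the homomorphism step to be the main obstacle: it requires pinning down, from Lemma~\ref{pchunkmultiplication} and the uniqueness part of Lemma~\ref{reduced}, that the $p$‑chunks $W\upharpoonright_p\mathcal B$ and $W'\upharpoonright_p\mathcal C$ which vanish in $\Red(WW')$ are precisely mutual inverses, and arranging the refinements of the three $p$‑decompositions so that the $I$‑, $\mathcal B$‑, $\mathcal C$‑ and $I'$‑parts line up block‑by‑block. Once this bookkeeping is done, the remainder is formal manipulation of the formula in Lemma~\ref{welldefined} together with the observation that pure $p$‑chunks die in the quotients; the genuinely deep input — producing a coherent collection whose $\Pfine$‑closures exhaust $\Red_{a,b}$ and $\Red_c$ — is the business of the later sections.
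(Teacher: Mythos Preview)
Your proposal is correct, and the inverse--isomorphism half is essentially the paper's argument (indeed, you are more careful than the paper in invoking Lemma~\ref{almostidentified} and Lemma~\ref{prettyclose}(ii) where the paper writes a line that is technically a typo).

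Where you genuinely diverge is the homomorphism step. The paper does \emph{not} use Lemma~\ref{pchunkmultiplication} here. Instead it first proves an auxiliary fact: if $W\in\Pfine(\{W_x\})$ is already reduced and one writes $W\equiv W_0W_1$ as a concatenation of subwords, then $\phi_0(W)=\phi_0(W_0')\phi_0(W_1')$, where $W_i'$ is $W_i$ with its boundary pure block (if any) deleted. This is immediate from the definition once one refines the Lemma~\ref{elementsofthegeneratedsubgroup} decomposition across the cut. Then for arbitrary $W_0,W_1$ the paper invokes the \emph{letter-level} cancellation of Lemma~\ref{reduced}, writing $W_0\equiv V_{0,0}V_{0,1}$, $W_1\equiv V_{1,0}V_{1,1}$ with $V_{0,1}\equiv V_{1,0}^{-1}$ and $V_{0,0}V_{1,1}$ reduced, and applies the auxiliary fact three times. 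Your route instead stays at the $p$-decomposition level throughout: you read off from Lemma~\ref{pchunkmultiplication} that the deleted tails $W\upharpoonright_p\mathcal B$ and $W'\upharpoonright_p\mathcal C$ satisfy $(W\upharpoonright_p\mathcal B)(W'\upharpoonright_p\mathcal C)\sim E$, hence (both being reduced) are mutual inverses, and then split the defining product for $\phi_0$ across the refined blocks. Both approaches require the same refinement bookkeeping; yours trades the paper's ``primed words'' device for a cancellation argument that extracts $\mathcal B\equiv\mathcal C^{-1}$ from Lemma~\ref{pchunkmultiplication} plus Lemma~\ref{reduced}. The paper's version is perhaps slightly more elementary in that it never needs to justify that last identity, while yours is more uniform in working entirely with $p$-chunks.
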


\begin{proof}
Suppose first that $W \in \Pfine(\{W_x\}_{x \in X})$ and $W \equiv W_0W_1$.  Let 

\[
W_0' = \left\{
\begin{array}{ll}
W_0
                                            & \text{if } \max(\pindex(W_0))\text{ does not exist}, \\
W_0 \upharpoonright_p \pindex(W_0) \setminus \{\max(\pindex(W_0))\}                             & \text{otherwise}
\end{array}
\right.
\]

\noindent and

\[
W_1' = \left\{
\begin{array}{ll}
W_1
                                            & \text{if } \min(\pindex(W_0))\text{ does not exist}, \\
W_1 \upharpoonright_p \pindex(W_1) \setminus \{\min(\pindex(W_1))\}                             & \text{otherwise}.
\end{array}
\right.
\]

\noindent We shall show that $\phi_0(W) = \phi_0(W_0')\phi_0(W_1')$ (it is clear that both $W_0'$ and $W_1'$ are in $\Pfine(\{W_x\}_{x \in X})$).  Note that we can decompose $\pindex(W)$ as a concatenation

\begin{itemize}

\item $\pindex(W) = \pindex(W_0')\overline{I_0}\underline{I_1}\pindex(W_1')$ in case $|\pindex(W) \setminus (\pindex(W_0') \cup \pindex(W_1'))| = 2$;

\item $\pindex(W) = \pindex(W_0')\overline{I_0}\pindex(W_1')$ in case $|\pindex(W) \setminus (\pindex(W_0') \cup \pindex(W_1'))| = 1$; or

\item $\pindex(W) = \pindex(W_0')\pindex(W_1')$ in case $|\pindex(W) \setminus (\pindex(W_0') \cup \pindex(W_1'))| = 0$

\end{itemize}

\noindent with $\overline{I_0}$ and $\underline{I_1}$ an interval having one element.  Take a decomposition $\pindex(W) = I_0 \cdots I_n$ as in Lemma \ref{elementsofthegeneratedsubgroup}, which is without loss of generality a refinement of the above decomposition.

Let $J = \{0 \leq r \leq n \mid |I_r| > 1\}$ and select for each $r \in J$ a $x_r \in X$, $i_r \in \{-1, 1\}$, and interval $\Lambda_r \subseteq \pindex(W_{x_r})$ such that $W \upharpoonright_p I_r \equiv (W_{x_r} \upharpoonright_p \Lambda_r)^{i_r}$.  Let $k_0$ be maximal such that $\bigcup I_{k_0} \subseteq \overline{W_0}$ and $k_1$ be minimal such that $\bigcup I_{k_1} \subseteq \overline{W_1}$.   Now

$$
\begin{array}{ll}
\phi_0(W) & = \prod_{r \in J} [[(U_{x_r} \upharpoonright_p \varpropto(\Lambda_r, \iota_{x_r}))^{i_r}]]\\
& =  \prod_{r \in J, r \leq k_0 }[[(U_{x_r} \upharpoonright_p \varpropto(\Lambda_r, \iota_{x_r}))^{i_r}]] \prod_{r \in J, r \geq k_1}[[(U_{x_r} \upharpoonright_p \varpropto(\Lambda_r, \iota_{x_r}))^{i_r}]]\\
& = \phi_0(W_0')\phi_0(W_1')
\end{array}
$$

Next we note the obvious fact that if $W \in \Pfine(\{W_x\}_{x \in X})$, then $\phi_0(W^{-1}) = (\phi_0(W))^{-1}$.  This is clear since if $\pindex(W) = I_0 \cdots I_n$ is a decomposition as in Lemma \ref{elementsofthegeneratedsubgroup} then so too is $\pindex(W^{-1}) = I_n^{-1} \cdots I_0^{-1}$, and defining $J$ as usual and selecting $x_r \in X$, etc. we write the equality

$$
\begin{array}{ll}
(\phi_0(W))^{-1} & = (\prod_{r \in J} [[(U_{x_r} \upharpoonright_p \varpropto(\Lambda_r, \iota_{x_r}))^{i_r}]])^{-1}\\
& =  \prod_{r \in J^{-1}} [[(U_{x_r} \upharpoonright_p \varpropto(\Lambda_r, \iota_{x_r}))^{-i_r}]]\\
& = \phi_0(W^{-1})
\end{array}
$$

\noindent where $J^{-1}$ is the set $J$ under the reverse order.

To finish the proof that $\phi_0$ is a homomorphism, we combine the information extracted so far.  Given arbitrary $W_0, W_1 \in \Pfine(\{W_x\}_{x \in X})$, by Lemma \ref{reduced} select words $V_{0, 0}, V_{0, 1}, V_{1, 0}, V_{1, 1}$ in $\Red_{a, b}$ such that

\begin{itemize}

\item $W_0 \equiv V_{0, 0}V_{0, 1}$;

\item $W_1 \equiv V_{1, 0}V_{1, 1}$;

\item $V_{0, 1} \equiv V_{1, 0}^{-1}$;

\item $V_{0, 0}V_{1, 1}$ is reduced.

\end{itemize}

Define

\[
V_{0, 0}' = \left\{
\begin{array}{ll}
V_{0, 0}
                                            & \text{if } \max(\pindex(V_{0, 0}))\text{ does not exist}, \\
V_{0, 0} \upharpoonright_p \pindex(V_{0, 0}) \setminus \{\max(\pindex(V_{0, 0}))\}  & \text{otherwise}
\end{array}
\right.
\]

\[
V_{1, 0}' = \left\{
\begin{array}{ll}
V_{1, 0}
                                            & \text{if } \max(\pindex(V_{1, 0}))\text{ does not exist}, \\
V_{1, 0} \upharpoonright_p \pindex(V_{1, 0}) \setminus \{\max(\pindex(V_{1, 0}))\}  & \text{otherwise}
\end{array}
\right.
\]

\[
V_{0, 1}' = \left\{
\begin{array}{ll}
V_{0, 1}
                                            & \text{if } \min(\pindex(V_{0, 1}))\text{ does not exist}, \\
V_{0, 1} \upharpoonright_p \pindex(V_{0, 1}) \setminus \{\min(\pindex(V_{0, 1}))\}  & \text{otherwise}
\end{array}
\right.
\]

\[
V_{1, 1}' = \left\{
\begin{array}{ll}
V_{1, 1}
                                            & \text{if } \min(\pindex(V_{1, 1}))\text{ does not exist}, \\
V_{1, 1} \upharpoonright_p \pindex(V_{1, 1}) \setminus \{\min(\pindex(V_{1, 1}))\}  & \text{otherwise}.
\end{array}
\right.
\]

\noindent Note that $V_{0, 1}' \equiv (V_{1, 0}')^{-1}$.  We have

$$
\begin{array}{ll}
\phi_0(W_0)\phi_0(W_1) & = \phi_0(V_{0, 0}')\phi_0(V_{0, 1}')\phi_0(V_{1, 0}')\phi_0(V_{1, 1}')\\
& = \phi_0(V_{0, 0}')\phi_0(V_{1, 1}')\\
& = \phi_0(V_{0, 0}V_{1, 1})\\
& = \phi_0(\Red(W_0W_1))
\end{array}
$$

\noindent and the proof that $\phi_0$ is a homomorphism is complete.  The proof that $\phi_1$ is a homomorphism follows via mutatis mutandis.

Now suppose that $\Pfine(\{W_x\}_{x \in X} = \Red_{a, b}$ and $\Pfine(\{U_x\}_{x \in X}) = \Red_c$.  To see that $\Phi_0$ can be well-defined, note that each pure word $W \in \Pure_{a, b}$ has $\phi_0(W) = [[E]]$ since $\pindex(W)$ consists of a single element.  So $\ker(\beth_{a, b}) \leq \ker(\phi_1)$ and we obtain a homomorphism $\Phi_0$ as desired, and $\Phi_1$ is similarly obtained.

To see that $\Phi_0$ and $\Phi_1$ are inverse isomorphisms we let $W \in \Red_{a, b}$ be given, take $\pindex(W) \equiv I_0\cdots I_n$ as in Lemma \ref{elementsofthegeneratedsubgroup}, define $J$ as usual, and select all other parameters.  We get

$$
\begin{array}{ll}
\Phi_1\circ\Phi_0([[W]]) & = \Phi_1(\prod_{r \in J} [[(U_{x_r} \upharpoonright_p \varpropto(\Lambda_r, \iota_{x_r}))^{i_r}]])\\
& = \prod_{r \in J} \Phi_1([[(U_{x_r} \upharpoonright_p \varpropto(\Lambda_r, \iota_{x_r}))^{i_r}]])\\
& = \prod_{r \in J}[[(W_{x_r} \upharpoonright_p \varpropto(\Lambda_r, \iota_{x_r}))^{i_r}]]\\
& = [[W]]
\end{array}
$$

\noindent where the last line holds because the unreduced word $\prod_{r \in J}(W_{x_r} \upharpoonright_p \varpropto(\Lambda_r, \iota_{x_r}))^{i_r}$ is obtained from $W$ by removing finitely many points in $\pindex(W)$.  So $\Phi_1 \circ \Phi_0$ is identity, and by the same reasoning $\Phi_0 \circ \Phi_1$ is also identity.

\end{proof}

\end{section}

\begin{section}{Extending coherent collections in discrete cases} \label{discrete}

Now that Lemma \ref{homomorphism} provides us a means of creating the desired isomorphism, we give a sequence of lemmas which demonstrate how to keep extending a coherent collection of coi triples.  We begin with some small steps, showing that nesting coherent collections behave well and that it is possible to make certain conservative extensions of coherent collections (cf. Lemma 3.12, 3.17, 3.18 of \cite{Cor2} respectively).

\begin{lemma}\label{ascendingchaincoi}  Suppose that $(T, \leq)$ is a totally ordered set, $\{C_t\}_{t \in T}$ is a set of coherent collections of coi triples such that $t \leq t'$ implies $C_t \subseteq C_{t'}$.  Then $\bigcup_{t \in T} C_t$ is coherent.
\end{lemma}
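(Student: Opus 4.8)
The plan is to exploit the fact that the coherence condition is always tested two triples at a time, so that the \emph{directedness} of an ascending chain — which, along a totally ordered index set, amounts to ``any two members are comparable'' — is all that is needed.

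First I would fix notation. Write $C = \bigcup_{t \in T} C_t$ and, for each $t$, write $C_t = \{\coi(W_x, \iota_x, U_x)\}_{x \in X_t}$. The hypothesis $t \leq t' \Rightarrow C_t \subseteq C_{t'}$ means $X_t \subseteq X_{t'}$ and that the triple attached to an index $x$ does not change as $t$ grows; hence $C$ is legitimately indexed by $X = \bigcup_{t \in T} X_t$, with a well-defined triple $(W_x, \iota_x, U_x)$ for each $x \in X$. (If $T = \emptyset$ then $C = \emptyset$, which is vacuously coherent, so I may assume $T \neq \emptyset$.)

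Next I would verify the first clause of coherence for $C$. Let $x_0, x_1 \in X$, let $I_0 \subseteq \pindex(W_{x_0})$ and $I_1 \subseteq \pindex(W_{x_1})$ be intervals, let $i \in \{-1, 1\}$, and suppose $W_{x_0} \upharpoonright_p I_0 \equiv (W_{x_1} \upharpoonright_p I_1)^i$. Choose $t_0, t_1 \in T$ with $x_0 \in X_{t_0}$ and $x_1 \in X_{t_1}$; since $T$ is totally ordered, $t := \max(t_0, t_1)$ exists and $x_0, x_1 \in X_t$. All of the data appearing in the hypothesis — the words $W_{x_0}, W_{x_1}$, the coi's $\iota_{x_0}, \iota_{x_1}$, the $p$-indices $\pindex(W_{x_0}), \pindex(W_{x_1})$, the intervals $I_0, I_1$, and the relation between the p-chunks — is literally the same whether read off from $C$ or from $C_t$. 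Since $C_t$ is coherent, $$[[U_{x_0} \upharpoonright_p \varpropto(I_0, \iota_{x_0})]] = [[(U_{x_1} \upharpoonright_p \varpropto(I_1, \iota_{x_1}))^i]],$$ which is exactly the conclusion demanded of $C$. The second, symmetric clause (with indices $x_2, x_3$ and intervals among the $U$-side p-chunks) is handled verbatim by the same ``pass to a common $C_t$'' step.

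I do not anticipate a genuine obstacle. The only point worth flagging is that one must notice coherence never couples more than two triples, so no cross-compatibility of the data attached to different levels $t$ needs to be arranged by hand — the nesting hypothesis $C_t \subseteq C_{t'}$ already forces it, and totality of $T$ guarantees that the two indices involved in any instance of the condition lie in a single $C_t$.
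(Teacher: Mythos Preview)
Your proof is correct and takes essentially the same approach as the paper's: pick a single $C_t$ containing both triples in question (using totality of $T$) and invoke the coherence of that $C_t$. The only difference is that you spell out the indexing a bit more carefully, which is harmless.
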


\begin{proof}
Suppose $\coi(W, \iota, U), \coi(W', \iota', U') \in \bigcup_{t \in T} C_t$ and intervals $I_0 \subseteq \pindex(W)$ and $I_1 \subseteq \pindex(W')$ and $i\in \{-1, 1\}$ are such that $W \upharpoonright_p I_0 \equiv (W' \upharpoonright_p I_1)^i$, we select $t \in T$ such that $\coi(W, \iota, U), \coi(W', \iota', U') \in C_t$.  Since $C_t$ is coherent we know

\begin{center}

$[[U \upharpoonright_p \varpropto(I_0, \iota)]] = [[(U' \upharpoonright_p \varpropto(I_1, \iota'))^i]]$

\end{center}

\noindent  The check when $\coi(W, \iota, U), \coi(W', \iota', U') \in \bigcup_{t \in T} C_t$ have intervals $I \subseteq \pchunk(U)$ and $I' \subseteq \pchunk(U')$ and $j \in \{-1, 1\}$ with $U \upharpoonright_p I \equiv (U' \upharpoonright_p I')^j$ is analogous, so the proof is finished.
\end{proof}

\begin{definition}\label{standardized}  We will say that a collection of coi triples $C$ is \emph{standardized} if $$\{\coi(a_n, \iota_{a_n}, E)\}_{n \in \omega} \cup \{\coi(b_n, \iota_{b_n}, E)\}_{n \in \omega} \cup \{\coi(E, \iota_{c_m}, c_m)\}_{m \in \omega} \subseteq C$$ where each $\iota_{a_n}, \iota_{b_n}, \iota_{c_m}$ is the empty function.  
\end{definition}

\begin{remark}
It is clear that any coherent coi collection $C$ we can be extended to a standardized $C'$ which will also be coherent, simply by adding to $C$ the countably many coi triples listed in the definition.  The check that this new $C'$ is coherent simply involves noticing that expressions are equal to $[[E]]$.  It will be be convenient to assume that our collections are standardized.
\end{remark}

\begin{lemma} \label{findsomerepresentative}  Let $\{\coi(W_x, \iota_x, U_x)\}_{x \in X}$ be coherent and standardized.

\begin{enumerate}

\item  If $W \in \Pfine(\{W_x\}_{x \in X})$ there exists $U \in \Pfine(\{U_x\}_{x \in X})$ and coi $\iota$ from $W$ to $U$ such that $\{\coi(W_x, \iota_x, U_x)\}_{x \in X} \cup \{\coi(W, \iota, U)\}$ is coherent.

\item If $U \in \Pfine(\{U_x\}_{x \in X})$ then there exists $W \in \Pfine(\{W_x\}_{x \in X})$ and coi $\iota$ from $W$ to $U$ such that $\{\coi(W_x, \iota_x, U_x)\}_{x \in X} \cup \{\coi(W, \iota, U)\}$ is coherent. 

\end{enumerate}

\end{lemma}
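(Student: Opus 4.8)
I will describe the argument for (1); part (2) is the mirror image, with the alphabets $AB$ and $C$ (and the maps $\phi_0$, $\phi_1$) interchanged and Lemmas \ref{elementsofthegeneratedsubgroupC} and \ref{fineC} used in place of Lemmas \ref{elementsofthegeneratedsubgroup} and \ref{fine}. Given $W\in\Pfine(\{W_x\}_{x\in X})$, the plan is to first fix, via Lemma \ref{elementsofthegeneratedsubgroup}, a decomposition $\pindex(W)=I_0\cdots I_n$ of the stated form, and for each $r\in J=\{r:|I_r|>1\}$ a choice of $x_r\in X$, interval $\Lambda_r\subseteq\pindex(W_{x_r})$ and $i_r\in\{-1,1\}$ with $W\upharpoonright_p I_r\equiv(W_{x_r}\upharpoonright_p\Lambda_r)^{i_r}$. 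I set $U_r:=(U_{x_r}\upharpoonright_p\varpropto(\Lambda_r,\iota_{x_r}))^{i_r}$ for $r\in J$ and $U_r:=E$ otherwise; composing the order (anti-)isomorphism witnessing $W\upharpoonright_p I_r\equiv(W_{x_r}\upharpoonright_p\Lambda_r)^{i_r}$ with the appropriate restriction of $\iota_{x_r}$ yields a coi $\iota^{(r)}$ from $I_r$ to $\pindex(U_r)$ (empty when $r\notin J$). Then I would build $U$ by concatenating the non-empty $U_r$ in order, inserting between each consecutive pair one pure ``barrier'' letter $c_{m_r}$, the subscripts $m_r$ chosen distinct and large enough to avoid the finitely many subscripts that could provoke cancellation at those junctions. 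The barriers block any cancellation across a junction, so $U$ is already reduced, and $U\in\Pfine(\{U_x\}_{x\in X})$ because that set is a subgroup (Lemma \ref{fineC}) containing each $U_r$ (a p-chunk, or inverse of one, of some $U_x$) and each $c_{m_r}$ (which lies in $\Pfine(\{U_x\}_{x\in X})$ by standardization). Finally put $\iota:=\bigsqcup_r\iota^{(r)}$, viewed as a partial function from $\pindex(W)=I_0\cdots I_n$ into $\pindex(U)$.

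Next I would check that $\iota$ is a coi from $W$ to $U$: its domain meets every infinite interval of $\pindex(W)$ since each $\iota^{(r)}$ has close domain in $I_r$ while the singleton intervals contribute vacuously, and its range meets every infinite interval of $\pindex(U)$ since the $\iota^{(r)}$-ranges are close in the respective $\pindex(U_r)$ and only the finitely many barrier letters of $\pindex(U)$ lie outside $\bigsqcup_r\pindex(U_r)$. The heart of the proof is then to establish, for every interval $I\subseteq\pindex(W)$ and every interval $\Theta\subseteq\pindex(U)$, the identities
\[
[[U\upharpoonright_p\varpropto(I,\iota)]]=\phi_0(W\upharpoonright_p I),\qquad [[W\upharpoonright_p\varpropto(\Theta,\iota^{-1})]]=\phi_1(U\upharpoonright_p\Theta),
\]
where $\phi_0,\phi_1$ are the well-defined maps of Lemma \ref{welldefined}. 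To prove the first I would write $I$ as a concatenation of the pieces $I\cap I_r$ (a terminal piece of $I_{r_0}$, then whole intervals $I_r$, then an initial piece of $I_{r_1}$); this is a Lemma \ref{elementsofthegeneratedsubgroup}-type decomposition of the p-chunk $W\upharpoonright_p I\in\Pfine(\{W_x\}_{x\in X})$, so $\phi_0(W\upharpoonright_p I)$ is the product over $r$ of the corresponding $[[(U_{x_r}\upharpoonright_p\varpropto(\cdot,\iota_{x_r}))^{i_r}]]$. On the other side, since $\iota=\bigsqcup_r\iota^{(r)}$, the word $U\upharpoonright_p\varpropto(I,\iota)$ breaks into the blocks $U_r\upharpoonright_p\varpropto(I\cap I_r,\iota^{(r)})$, the full blocks $U_r$ for interior $r$, and the intervening barrier letters; the barriers are pure and so vanish under $[[\cdot]]$, and the discrepancies between $\varpropto(I\cap I_r,\iota^{(r)})$ and the block transported through $\iota_{x_r}$ are finite (Lemmas \ref{prettyclose}, \ref{coilemma2}, \ref{almostidentified}) and so also vanish under $[[\cdot]]$. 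Matching the two sides term by term, with the $\varpropto$-bookkeeping supplied by Lemma \ref{gettingstarted}, gives the identity; the second identity is obtained the same way with the roles reversed.

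Granting the two identities, coherence of $\{\coi(W_x,\iota_x,U_x)\}_{x\in X}\cup\{\coi(W,\iota,U)\}$ is a formal consequence. Any instance of the coherence condition not mentioning the new triple holds by hypothesis; an instance mentioning it --- say $W\upharpoonright_p I\equiv(W_x\upharpoonright_p\Lambda)^{\epsilon}$, or $W\upharpoonright_p I\equiv(W\upharpoonright_p I')^{\epsilon}$, and dually on the $U$-side --- becomes, after substituting the identities, the assertion that $\phi_0$ (respectively $\phi_1$) is well defined and a homomorphism, which is Lemmas \ref{welldefined} and \ref{homomorphism}; here one also uses that $\phi_0(W_x\upharpoonright_p\Lambda)=[[U_x\upharpoonright_p\varpropto(\Lambda,\iota_x)]]$ for a p-chunk and that $\phi_0,\phi_1$ send pure words to $[[E]]$. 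This finishes (1), and (2) is carried out symmetrically.

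The step I expect to be the main obstacle is not the (lengthy but routine) coherence bookkeeping but ensuring that $\iota$ genuinely is a coi. If one simply set $U=\Red(\prod_r U_r)$, a cancellation might occur that annihilates the $U$-block corresponding to some infinite $I_r$, leaving $\iota$ with nowhere to be defined on that infinite interval of $\pindex(W)$ --- yet $W$ stays reduced, so that interval cannot be dropped from a close set. Coherence of the original collection forces any such cancellation to be trivial in the quotient $\beth_{a,b}$, but not trivial as a word, so it cannot merely be ruled out; the pure barrier letters are the device that prevents it, and they cost nothing precisely because pure p-chunks are invisible both to $[[\cdot]]$ and to the coherence relations.
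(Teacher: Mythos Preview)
Your proposal is correct and follows essentially the same approach as the paper. The construction of $U$ by concatenating the blocks $U_r=(U_{x_r}\upharpoonright_p\varpropto(\Lambda_r,\iota_{x_r}))^{i_r}$ with pure barrier letters inserted, and of $\iota$ by gluing the restricted $\iota_{x_r}$'s, matches the paper's proof; your organization of the coherence check around the two master identities $[[U\upharpoonright_p\varpropto(I,\iota)]]=\phi_0(W\upharpoonright_p I)$ and $[[W\upharpoonright_p\varpropto(\Theta,\iota^{-1})]]=\phi_1(U\upharpoonright_p\Theta)$ is a slightly cleaner repackaging of the same case-by-case computations the paper carries out (the paper inserts a barrier only where two consecutive blocks would otherwise share a purity type, whereas you insert one at every junction, but this is immaterial).
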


\begin{proof}  
We begin with the proof of (1), so let $W \in \Pfine(\{W_x\}_{x \in X})$.  If $W \equiv E$ then let $U \equiv E$ and $\iota = \emptyset$; the coherence of the collection $\{\coi(W_x, \iota_x, U_x)\}_{x \in X} \cup \{\coi(W, \iota, U)\}$ is quite clear.  If $W$ is not the empty word then let $\pindex(W) = I_0\cdots I_n$ be a decomposition into intervals as in Lemma \ref{elementsofthegeneratedsubgroup}.  Write $J = \{0 \leq j \leq n \mid |I_j| > 1\}$, for each $j \in J$ select $x_j \in X$ and $i_j \in \{-1, 1\}$ and interval $\Lambda_j \subseteq \pindex(W_{x_j})$ with $W \upharpoonright_p I_j \equiv (W_{x_j} \upharpoonright_p \Lambda_j)^{i_j}$.  Let $J' = \{j \in J \mid (U_{x_j}\upharpoonright \varpropto(\Lambda_j, \iota_{x_j}))^{i_j} \not\equiv E\}$.  If $J' = \emptyset$ then $\pindex(W)$ is finite (Lemma \ref{coilemma2}) and we can take $U \equiv E$ and $\iota = \emptyset$, and the check for coherence is easy.

Assume $J' \neq \emptyset$ and enumerate $J' = \{j_0, \ldots, j_{|J'| - 1}\}$ in increasing order.  For each $j \in J'$ let 

\begin{enumerate}[(a)]

\item $f_j: I_j \rightarrow \Lambda_j^{i_j}$ be an order isomorphism witnessing that $W \upharpoonright_p I_j \equiv (W_{x_j} \upharpoonright_p \Lambda_j)^{i_j}$.;

\item $U_j' \equiv (U_{x_j}\upharpoonright_p \varpropto(\Lambda_j, \iota_{x_j}))^{i_j}$;

\item $U_j \equiv  U_j'c_m$ if

\begin{enumerate}[(i)]

\item $j = j_t < j_{|J'| - 1}$,

\item both $\max(\pindex(U_{j_t}'))$ and $\min(\pindex(U_{j_{t + 1}}'))$ exist, and

\item $U_{j_t}' \upharpoonright_p \{\max(\pindex(U_{j_t}'))\}$ and $U_{j_{t + 1}}' \upharpoonright_p \{\min(\pindex(U_{j_{t + 1}}'))\}$ are each $m'$-pure and $m = m' + 1$

\end{enumerate}

\noindent and otherwise let $U_j \equiv U_j'$.

\end{enumerate}

\noindent Let $U \equiv \prod_{j \in J'} U_j$.  It is easy to see (by how the $U_j$ were chosen) that $U \in \Red_c$, and in fact $U \in \Pfine(\{U_x\}_{x \in X})$ since the collection is standardized.  Letting $\Lambda \subseteq \pindex(W)$ be given by $\Lambda = \bigcup_{j \in J} f_j^{-1}(\Lambda_j \cap \dom(\iota_{x_j}))$ it is clear by Lemma \ref{basiccloseproperties} that $\Close(\Lambda, \pindex(W))$.  Let $\Theta \subseteq \pindex(U)$ be given by $\Theta = \bigcup_{j \in J} \iota_{x_j}(\Lambda_j \cap \dom(\iota_{x_j}))$ and again $\Close(\Theta, \pindex(U))$.  Let $\iota: \Lambda \rightarrow \Theta$ be given by $\iota \upharpoonright f_j^{-1}(\Lambda_j \cap \dom(\iota_{x_j})) = \iota_{x_j} \circ f_j \upharpoonright  f_j^{-1}(\Lambda_j \cap \dom(\iota_{x_j}))$ for each $j \in J$.  Clearly $\iota$ is an order isomorphism (when $i_j = -1$ the composition reverses the order twice).

Now we claim that $\{\coi(W_x, \iota_x, U_x)\}_{x \in X} \cup \{\coi(W, \iota, U)\}$ is coherent, which we must check.  Suppose that $I \subseteq \pindex(W)$, $x \in X$, $I' \subseteq \pindex(W_x)$ and $i \in \{-1, 1\}$ are such that $W \upharpoonright_p I \equiv (W_x \upharpoonright_p I')^i$.  Note that $I = I$ is a decomposition of $\pindex(W \upharpoonright_p I) = I$ as in Lemma \ref{elementsofthegeneratedsubgroup}, $W \upharpoonright_p I \equiv (W_x \upharpoonright_p I')^i$, and $I = (I \cap I_0)(I \cap I_1) \cdots (I \cap I_n)$ since $W \upharpoonright_p I \equiv (W_x \upharpoonright_p I')^i$ is a refinement of it.  Thus

$$
\begin{array}{ll}
[[U \upharpoonright_p \varpropto (I, \iota)]] & = \prod_{j \in J'}[[(U_{x_j} \upharpoonright_p \varpropto(\iota_{x_j}, I \cap I_j))^{i_j}]]\\
& = \prod_{j \in J}[[(U_{x_j} \upharpoonright_p \varpropto(\iota_{x_j}, I \cap I_j))^{i_j}]]\\
& = [[(U_x \upharpoonright_p \varpropto(\iota_x, I'))^i]]
\end{array}
$$

\noindent where the first equality is clear, the second is by adding a copy of $[[E]]$ to  for each $j \in J \setminus J'$, and the third equality is by Lemma \ref{passtorefinement}.  On the other hand if intervals $I, I' \subseteq \pindex(W)$  and $i \in \{-1, 1\}$ are such that $W \upharpoonright_p I \equiv (W \upharpoonright_p I')^i$ we write

$$
\begin{array}{ll}
[[U \upharpoonright_p \varpropto (I, \iota)]] & = \prod_{j \in J'}[[(U_{x_j} \varpropto(\iota_{x_j}, I \cap I_j))^{i_j}]]\\
& = \prod_{j \in J}[[(U_{x_j} \varpropto(\iota_{x_j}, I \cap I_j))^{i_j}]]\\
& = \phi_0(W \upharpoonright_p I)\\
& =  (\prod_{j \in J}[[(U_{x_j} \varpropto(\iota_{x_j}, I' \cap I_j))^{i_j})^i]]\\
& = (\prod_{j \in J'}[[(U_{x_j} \varpropto(\iota_{x_j}, I \cap I_j))^{i_j})^i]]\\
& = [[(U \upharpoonright_p \varpropto (I', \iota))^i]]
\end{array}
$$

Now suppose that $I \subseteq \pindex(U)$, $x \in X$, $I' \subseteq \pindex(U_x)$ and $i \in \{-1, 1\}$ are such that $U \upharpoonright_p I \equiv (U_x \upharpoonright_p I')^i$.  Then

$$
\begin{array}{ll}
[[W \upharpoonright_p \varpropto (I, \iota^{-1})]] & = \prod_{k = 0}^n [[W \upharpoonright_p I_k \cap \varpropto (I, \iota^{-1})]]\\
& = \prod_{j \in J}[[W \upharpoonright_p I_j \cap \varpropto (I, \iota^{-1})]]\\
& = \prod_{j \in J}[[(W_{x_j} \upharpoonright_p \varpropto (I_j'', \iota_{x_j}^{-1}))^{i_j}]]\\
& = \phi_1(U \upharpoonright_p I)\\
& = (\phi_1(U_x \upharpoonright_p I'))^i\\
& = [[(W_x \upharpoonright_p \varpropto(I', \iota_x^{-1}))^i]]
\end{array}
$$

\noindent where the first equality is clear, the second comes from deleting finitely many expressions which are equal to $[[E]]$, the third is an application of Lemma \ref{passtorefinementbackwards} (we are now using the fact that $\{c_m\}_{m \in \omega} \subseteq \Pfine(\{U_x\}_{x \in X})$, and the interval $I_j'' \subseteq \pindex(U_{x_j})$ is the appropriate one), the remaining are by how $\phi_1$ is defined.  If on the other hand $I, I' \subseteq \pindex(U)$ and $i \in \{-1, 1\}$ are such that $U \upharpoonright_p I \equiv (U \upharpoonright_p I')^i$ then

$$
\begin{array}{ll}
[[W \upharpoonright_p \varpropto (I, \iota^{-1})]] & = \prod_{j \in J}[[(W_{x_j} \upharpoonright_p \varpropto(I_j'', \iota_{x_j}^{-1}))^{i_j}]]\\
& = \phi_1(U \upharpoonright_p I)\\
& = (\phi_1(U \upharpoonright_p I'))^i\\
& = (\prod_{j \in J}[[(W_{x_j} \upharpoonright_p \varpropto(I_j''', \iota_{x_j}^{-1}))^{i_j}]])^i\\
& = [[(W \upharpoonright_p \varpropto (I', \iota^{-1}))^i]]
\end{array}
$$

\noindent where the intervals $I_j'', I_j''' \subseteq \pindex(U_{x_j})$ are appropriate.  So coherence holds and (1) is proved.

For (2) we will construct $W \in \Pfine(\{W_x\}_{x \in X})$ and $\iota$ and the check for coherence will be omitted as it is almost the same as in case (1).  Letting $U \in \Pfine(\{U_x\}_{x \in X})$, the claim is trivial if $U \equiv E$, so let $\pindex(U) = I_0\cdots I_n$ as in Lemma \ref{elementsofthegeneratedsubgroup}.  As in (1) let $J = \{0 \leq j \leq n \mid |I_j| > 1\}$, select $x_j \in X$ and $i_j \in \{-1. 1\}$ and interval $\Theta_j \subseteq \pindex(U_{x_j})$ with $U \upharpoonright_p I_j \equiv (U_{x_j}\upharpoonright_p \Theta_j)^i$ for each $j \in J$.  Let $J' = \{j \in J \mid (W_{x_j}\upharpoonright \varpropto(\Theta_j, \iota_{x_j}^{-1}))^{i_j} \not\equiv E\}$.  If $J' = \emptyset$ then $\pindex(U)$ is finite and let $W \equiv E$ and $\iota = \emptyset$.

Assuming $J' \neq \emptyset$ and enumerate $J' = \{j_0, \ldots, j_{|J'| - 1}\}$ in increasing order.  For each $j \in J'$ let 

\begin{enumerate}[(a)]

\item $f_j: I_j \rightarrow \Theta_j^{i_j}$ be an order isomorphism witnessing that $W \upharpoonright_p I_j \equiv (W_{x_j} \upharpoonright_p \Theta_j)^{i_j}$.;

\item $W_j' \equiv (W_{x_j}\upharpoonright_p \varpropto(\Theta_j, \iota_{x_j}^{-1}))^{i_j}$;

\item $W_j \equiv  W_j'V_j$ where

\begin{enumerate}[(i)]

\item $V_j \equiv a_0$ if 

\begin{itemize}

\item[$(*_1)$] $j = j_t < j_{|J'| - 1}$,

\item[$(*_2)$]  both $\max(\pindex(U_{j_t}'))$ and $\min(\pindex(U_{j_{t + 1}}'))$ exist, and

\item[$(*_3)$] $U_{j_t}' \upharpoonright_p \{\max(\pindex(U_{j_t}'))\}$ and $U_{j_{t + 1}}' \upharpoonright_p \{\min(\pindex(U_{j_{t + 1}}'))\}$ are each $b$-pure;

\end{itemize}

\item $V_j \equiv b_0$ if 

\begin{itemize}

\item[$(\circ_1)$] $j = j_t < j_{|J'| - 1}$,

\item[$(\circ_2)$]  both $\max(\pindex(U_{j_t}'))$ and $\min(\pindex(U_{j_{t + 1}}'))$ exist, and

\item[$(\circ_3)$] $U_{j_t}' \upharpoonright_p \{\max(\pindex(U_{j_t}'))\}$ and $U_{j_{t + 1}}' \upharpoonright_p \{\min(\pindex(U_{j_{t + 1}}'))\}$ are each $a$-pure;

\end{itemize}

\item $V_j \equiv E$ otherwise.

\end{enumerate}

\end{enumerate}

Let $W \equiv \prod_{j \in J'} W_j$ and it is clear that $W \in \Red_{a, b}$, indeed $W \in \Pfine(\{W_x\}_{x \in X})$ as the collection is standardized.  Take $\Theta \subseteq \pindex(W)$ given by $\Theta = \bigcup_{j \in J} f_j^{-1}(\Theta_j \cap \ran(\iota_{x_j}))$, and $\Close(\Theta, \pindex(U))$.  Let $\Lambda \subseteq \pindex(W)$ be given by $\Lambda = \bigcup_{j \in J'} \iota_{x_j}^{-1}(\Theta_j \cap \ran(\iota_{x_j}))$ and we have $\Close(\Lambda, \pindex(W))$.  Let $\iota: \Lambda \rightarrow \Theta$ be given by $\iota \upharpoonright \iota_{x_j}^{-1}(\Theta_j \cap \ran(\iota_{x_j})) = \iota_{x_j} \upharpoonright  \iota_{x_j}^{-1}(\Theta_j \cap \ran(\iota_{x_j}))$ for each $j \in J$.

\end{proof}

\begin{definition}  Recall that for $W \in \W_{a, b}$ we let $d(W) = \mathfrak{n} \in \omega \cup \{\infty\}$ where if $W \equiv E$ then $\mathfrak{n} = \infty$ and if $W \not \equiv E$ then $\mathfrak{n}$ is the least subscript of any letter appearing in $W$.  For example, $d(a_2a_3a_4 \cdots) = 2$.  Define $d(U)$ for a word $U \in \W_c$ is precisely the same manner.
\end{definition}

We record the following strengthening of the previous lemma.

\begin{lemma}\label{smallrepresentative}  
 Let $\{\coi(W_x, \iota_x, U_x)\}_{x \in X}$ be coherent and standardized.  Let also $N \in\omega$ be given.

\begin{enumerate}

\item  If $W \in \Pfine(\{W_x\}_{x \in X})$ there exists $U \in \Pfine(\{U_x\}_{x \in X})$ and coi $\iota$ from $W$ to $U$ such that $\{\coi(W_x, \iota_x, U_x)\}_{x \in X} \cup \{\coi(W, \iota, U)\}$ is coherent.  Moreover $d(U) \geq N$, and additionally $\dom(\iota) \neq \emptyset$ provided $W \not\equiv E$.

\item If $U \in \Pfine(\{U_x\}_{x \in X})$ then there exists $W \in \Pfine(\{W_x\}_{x \in X})$ and coi $\iota$ from $W$ to $U$ such that $\{\coi(W_x, \iota_x, U_x)\}_{x \in X} \cup \{\coi(W, \iota, U)\}$ coherent.  Moreover $d(W) \geq N$, and additionally $\dom(\iota) \neq \emptyset$ provided $U \not\equiv E$.

\end{enumerate}

\end{lemma}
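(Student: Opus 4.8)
The idea is to bootstrap Lemma \ref{findsomerepresentative}: that lemma already produces \emph{some} coherent extension $\{\coi(W_x,\iota_x,U_x)\}_{x\in X}\cup\{\coi(W,\iota,U)\}$, and the only extra demands here are the lower bound $d(U)\geq N$ (resp. $d(W)\geq N$) and the nonemptiness of $\dom(\iota)$ when the input word is nonempty.  The engine behind this is the ``alphabet shift'' remark below Lemma \ref{GScombinatorial}: $\Red_c$ is isomorphic to the subgroup of $\Red_c$ generated by $\{c_n^{\pm1}\}_{n\geq N}$ via $c_n\mapsto c_{n+N}$, and this shift takes p-chunks to p-chunks, pure words to pure words, and is compatible with $[[\cdot]]$; likewise for $\Red_{a,b}$ with the shift $a_n\mapsto a_{n+N}$, $b_n\mapsto b_{n+N}$.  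So after running Lemma \ref{findsomerepresentative} to get a suitable $U$, one replaces $U$ by its image $\widehat U$ under this shift and $\iota$ by the induced coi (the shift is a bijection on p-indices since it sends the $p$-decomposition of $U$ to that of $\widehat U$); coherence is preserved because every coherence condition is an equation in $[[\cdot]]$ and the shift respects all the relevant operations.  For the standardized hypothesis we note that $\{c_m\}_{m\in\omega}$ — in particular all $c_m$ with $m\geq N$ — already lie in $\Pfine(\{U_x\}_{x\in X})$, so $\widehat U\in\Pfine(\{U_x\}_{x\in X})$ still holds.

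\textbf{Carrying it out for (1).}  Given $W\in\Pfine(\{W_x\}_{x\in X})$ and $N\in\omega$, first apply Lemma \ref{findsomerepresentative}(1) to obtain $U_{\mathrm{old}}\in\Pfine(\{U_x\}_{x\in X})$ and a coi $\iota_{\mathrm{old}}$ from $W$ to $U_{\mathrm{old}}$ with $\{\coi(W_x,\iota_x,U_x)\}_{x\in X}\cup\{\coi(W,\iota_{\mathrm{old}},U_{\mathrm{old}})\}$ coherent.  Let $\sigma:\Red_c\to\Red_c$ be the embedding induced by $c_n\mapsto c_{n+N}$, set $U\equiv\sigma(U_{\mathrm{old}})$, and let $\iota$ be the composite of $\iota_{\mathrm{old}}$ with the order isomorphism $p^*(U_{\mathrm{old}})\to p^*(U)$ that $\sigma$ induces on $p$-indices.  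Then $d(U)\geq N$ since every letter of $U$ has subscript $\geq N$.  Coherence of the new collection reduces to coherence of the old one: for the first family of conditions, $[[U\upharpoonright_p\varpropto(I_0,\iota)]]=[[\sigma(U_{\mathrm{old}}\upharpoonright_p\varpropto(I_0,\iota_{\mathrm{old}}))]]$ and $\sigma$ descends to a homomorphism $\Red_c/\langle\langle\Pure_c\rangle\rangle\to\Red_c/\langle\langle\Pure_c\rangle\rangle$ (it sends $\Pure_c$ into $\Pure_c$), so the old equalities in $[[\cdot]]$ push forward; for the second family the same argument applies with $\iota^{-1}$, $\sigma^{-1}$ on the image.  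Finally, to get $\dom(\iota)\neq\emptyset$ when $W\not\equiv E$: inspecting the construction in Lemma \ref{findsomerepresentative}(1), if $W\not\equiv E$ then either $J'\neq\emptyset$ — in which case $U\not\equiv E$, and since $\iota$ is a coi and $\Close(\dom\iota,p^*(W))$ with $p^*(W)$ infinite (or finite and then one handles it directly) one arranges $\dom(\iota)\neq\emptyset$ — or $J'=\emptyset$, meaning every nontrivial $p$-chunk of $W$ maps to $[[E]]$; in that degenerate case $p^*(W)$ is finite by Lemma \ref{coilemma2} and we can simply enlarge the construction by pairing one letter of $W$ with one letter $c_m$, $m\geq N$, of a new $U\equiv c_m$, which keeps the collection coherent because the only new equations in $[[\cdot]]$ produced are again trivial (pure words map to $[[E]]$).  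Part (2) is identical with the roles of the two alphabets swapped, using the shift $a_n\mapsto a_{n+N}$, $b_n\mapsto b_{n+N}$ on $\Red_{a,b}$.

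\textbf{Main obstacle.}  The genuinely delicate point is the $\dom(\iota)\neq\emptyset$ clause in the degenerate case $W\equiv E$-like behavior (all nontrivial $p$-chunks dying in $[[\cdot]]$): one must exhibit an honest nonempty coi while not introducing any new nontrivial coherence obligation.  The fix above — attaching a single fresh letter on each side with $d\geq N$ and noting that a one-letter word is pure, hence contributes only trivial $[[\cdot]]$-equations — resolves this, but it is the step where one must be careful that the standardized hypothesis is genuinely used (so that $c_m,a_0,b_0$ etc. are available in the respective $\Pfine$ groups) and that no coherence condition involving the new letter against an old p-chunk can fail.  Everything else is a mechanical transport of Lemma \ref{findsomerepresentative} along an alphabet-shifting isomorphism.
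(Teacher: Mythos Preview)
Your global ``alphabet shift'' strategy has a genuine gap in the coherence check.  Consider the mixed condition: for $x_1\in X$ and intervals $I_0\subseteq p^*(W)$, $I_1\subseteq p^*(W_{x_1})$ with $W\upharpoonright_p I_0\equiv(W_{x_1}\upharpoonright_p I_1)^i$, coherence demands
\[
[[U\upharpoonright_p\varpropto(I_0,\iota)]]=[[(U_{x_1}\upharpoonright_p\varpropto(I_1,\iota_{x_1}))^i]].
\]
The left side equals $[[\sigma(Z)]]$ with $Z=U_{\mathrm{old}}\upharpoonright_p\varpropto(I_0,\iota_{\mathrm{old}})$, and old coherence gives $[[Z]]=[[(U_{x_1}\upharpoonright_p\varpropto(I_1,\iota_{x_1}))^i]]$.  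Pushing the old equality forward along $\bar\sigma$ yields $[[\sigma(Z)]]=[[\sigma(\,(U_{x_1}\upharpoonright_p\varpropto(I_1,\iota_{x_1}))^i\,)]]$, which is \emph{not} the equation you need: the right-hand side in the coherence condition involves the \emph{unshifted} $U_{x_1}$.  So your argument tacitly assumes that the induced endomorphism $\bar\sigma$ of $\Red_c/\langle\langle\Pure_c\rangle\rangle$ is the identity, which you have not established and which is far from the observation that $\sigma(\Pure_c)\subseteq\Pure_c$.  The dual family (your ``second family'') has the symmetric problem: from $U\upharpoonright_p I\equiv(U_{x_0}\upharpoonright_p I')^j$ you obtain $U_{\mathrm{old}}\upharpoonright_p I^{*}\equiv(\sigma^{-1}(U_{x_0}\upharpoonright_p I'))^j$, but $\sigma^{-1}(U_{x_0}\upharpoonright_p I')$ is not a p-chunk of any $U_x$, so old coherence does not apply.

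The paper avoids this entirely by making only a \emph{finite} modification of $U'$: the set $\Theta_1=\{\theta\in p^*(U'): d(U'_\theta)<N\}$ is finite, and one replaces each $U'_{\theta_j}$ by a single pure word $c_{m_j}$ with $m_j\geq N$ chosen to preserve the p-decomposition.  Because the modification touches only finitely many pure pieces, for every interval $J$ the words $U\upharpoonright_p J$ and $U'\upharpoonright_p J$ differ by a finite product of (conjugates of) pure words, hence $[[U\upharpoonright_p J]]=[[U'\upharpoonright_p J]]$ on the nose, and likewise in the reverse direction.  Coherence then transfers directly from $\coi(W,\iota',U')$ to $\coi(W,\iota,U)$ with $\iota=\iota'$.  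The finite-$p^*(W)$ case (your ``degenerate'' case) is handled exactly as you suggest, by taking $U\equiv c_N$ and a one-point $\iota$.
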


\begin{proof}
We first show (1).  Assume the hyootheses and let $W \in \Pfine(\{W_x\}_{x \in X})$ be given.  If $W \equiv E$ then take $U \equiv E$ and $\iota = \emptyset$.  If $W \not\equiv E$ and $\pindex(W)$ is finite, then take take $\lambda \in \pindex(W)$, let $U \equiv c_N$ and $\iota: \{\lambda\} \rightarrow \pindex(U)$ to be the only possible function.  It is now clear that $\{\coi(W_x, \iota_x, U_x)\}_{x \in X} \cup \{\coi(W, \iota, U)\}$ is coherent (one keeps seeing that group elements are equal to $[[E]]$).

Now suppose that $\pindex(W)$ is infinite.  Apply Lemma \ref{findsomerepresentative} (1) to obtain a $U' \in \Pfine(\{U_x\}_{x \in X})$ and coi $\iota'$ from $W$ to $U$ with $\{\coi(W_x, \iota_x, U_x)\}_{x \in X} \cup \{\coi(W, \iota', U')\}$ coherent.  For convenience write 

\begin{itemize}

\item $\Lambda = \pindex(W)$;

\item $\Theta = \pindex(U')$;

\item $U' \equiv_p \prod_{\theta \in \Theta} U_{\theta}'$.

\end{itemize}

\noindent We shall modify $U'$ only slightly to obtain $U$.  Let $\Theta_1 = \{\theta \in \Theta \mid d(U_{\theta}') < N\}$, and since $U \in \W_c$ we know that $\Theta_1$ is finite, so label $\Theta_1 = \{\theta_0, \ldots, \theta_n\}$ with the $\theta_j$ in increasing order in $\Theta$.  (If $\Theta_1 = \emptyset$ then let $U \equiv U'$ and $\iota = \iota'$ and we are already done.)  Select $m_0 \geq N$ such that $$(\prod_{\theta < \theta_0} U_{\theta}') c_{m_0} (\prod_{\theta_0 < \theta < \theta_1} U_{\theta}')$$ is given in pure decomposition (that is, there is no nonempty terminal subword of $\prod_{\theta < \theta_0} U_{\theta}'$ which is $m_0$-pure and there is no nonempty initial subword of $\prod_{\theta_0 < \theta < \theta_1} U_{\theta}'$ which is $m_0$-pure).  Select $m_1 \geq N$ such that $$(\prod_{\theta < \theta_0} U_{\theta}') c_{m_0} (\prod_{\theta_0 < \theta < \theta_1} U_{\theta}') c_{m_1} (\prod_{\theta_1 < \theta < \theta_2} U_{\theta}')$$ is given in pure decomposition, etc., and finally select $m_n \geq N$ so that $$(\prod_{\theta < \theta_0} U_{\theta}') c_{m_0} (\prod_{\theta_0 < \theta < \theta_1} U_{\theta}') \cdots c_{m_{n - 1}} (\prod_{\theta_{m_{n - 1} < \theta < \theta_{m_n}}} U_{\theta}') c_{m_n} (\prod_{\theta_{m_n} < \theta} U_{\theta}')$$ is in pure decomposition.  Of course it is quite easy to make such selections; generally we can even select $m_j \in \{N, N + 1, N + 2\}$.

Now let $U_{\theta} \equiv U_{\theta}'$ for $\theta \in \Theta \setminus \Theta_1$ and $U_{\theta} \equiv c_{m_j}$ for $\theta = \theta_j \in \Theta_1$.  Let $U \equiv \prod_{\theta \in \Theta} U_{\theta}$ and it is clear that $U$ is reduced and in fact $\pindex(U) \equiv \prod_{\theta \in \Theta} \pindex(U_{\theta})$ and $U \in \Pfine(\{U_x\}_{x \in X})$.  Let $\iota = \iota'$.  For any interval $I \subseteq \Lambda$ it is clear that $[[U \upharpoonright_p \varpropto (I, \iota)]] = [[U' \upharpoonright_p \varpropto (I, \iota')]]$, and similarly for any interval $I \subseteq \Theta$ we have $[[W \upharpoonright_p \varpropto(I, \iota^{-1})]] = [[W \upharpoonright_p \varpropto(I, (\iota')^{-1})]]$.  From this it is clear that $\{\coi(W_x, \iota_x, U_x)\}_{x \in X} \cup \{\coi(W, \iota', U')\} \cup \{\coi(W, \iota, U)\}$ is coherent, so $\{\coi(W_x, \iota_x, U_x)\}_{x \in X} \cup \{\coi(W, \iota, U )\}$ is coherent.

The proof of (2) is only slightly more complicated and we give the sketch.  In the nontrivial case where $\pindex(U)$ is infinite we select $\coi(W', \iota', U)$ so that $$\{\coi(W_x, \iota_x, U_x)\}_{x \in X} \cup \{\coi(W', \iota', U)\}$$ is coherent (Lemma \ref{findsomerepresentative} (2)) and define $\Lambda$ and $\Theta$ as before and write $W' \equiv_p \prod_{\lambda \in \Lambda} W_{\lambda}'$.  Let $\Lambda_1 = \{\lambda \in \Lambda \mid d(W_{\lambda}') < N\}$.  In the nontrivial case where $\Lambda_1 \neq \emptyset$ enumerate $\Lambda_1 = \{\lambda_0, \ldots, \lambda_n\}$ and define $V_0, \ldots, V_n$ as follows.  Each $V_j$ is either $a_Nb_N$, $a_N$, $b_N$, or $b_Na_N$ and is simply chosen so that the word $$W \equiv (\prod_{\lambda < \lambda_0} W_{\lambda}') V_0 (\prod_{\lambda_0 < \lambda < \lambda_1} W_{\lambda}') V_1 \cdots (\prod_{\lambda_{n - 1} < \lambda < \lambda_n} W_{\lambda}') V_n (\prod_{\lambda_n < \lambda} W_{\lambda}')$$ is reduced and also so that $\pindex(W) = I_0\pindex(V_0)I_1\pindex(V_1)\cdots I_{n}\pindex(V_n) I_{n + 1}$ where $I_0 = \{\lambda \in \Lambda \mid \lambda < \lambda_0\}$, $I_{n + 1} = \{\lambda \in \Lambda \mid \lambda_n < \lambda\}$ and $I_{j} = \{\lambda \in \Lambda \mid \lambda_{j - 1} < \lambda < \lambda_j\}$ for $0 < j < n + 1$.

Recall that $\Lambda = \pindex(W')$ and note that $\Lambda \setminus \pindex(W) = \Lambda_1$ and $\pindex(W) \setminus \Lambda = \bigcup_{j = 0}^n \pindex(V_j)$.  Define $\iota = \iota' \upharpoonright (\dom(\iota') \cap \pindex(W))$.  Clearly for an interval $I \subseteq \pindex(W)$ we have $[[U \upharpoonright_p \varpropto(I, \iota)]] = [[U \upharpoonright_p \varpropto(I', \iota')]]$ where $I' \subseteq \Lambda$ is the smallest interval including the set $I \cap \Lambda$, since the symmetric difference $\varpropto(I', \iota') \Delta \varpropto(I, \iota)$ is finite.  If $I \subseteq \Theta$ then

$$
\begin{array}{ll}
[[W' \upharpoonright_p \varpropto(I, (\iota')^{-1})]]  & = \prod_{\ell = 0}^{n + 1}[[W' \upharpoonright_p (I_{\ell} \cap\varpropto(I, (\iota')^{-1}))]]  \\
& = \prod_{\ell = 0}^{n + 1}[[W \upharpoonright_p (I_{\ell} \cap\varpropto(I, (\iota)^{-1}))]]  \\
& = [[W \upharpoonright_p \varpropto (I, \iota^{-1})]]
\end{array}
$$

\noindent and coherence of the enlarged coi collection follows.

\end{proof}

Now that we have passed some preliminaries we are ready to state and prove the following.

\begin{lemma}\label{typeomega}
Suppose that $\{\coi(W_x, \iota_x, U_x)\}_{x \in X}$ is coherent and standardized and that $|X| < 2^{\aleph_0}$.  Each of the following holds.

\begin{enumerate}

\item If $W \in \Red_{a, b} \setminus \Pfine(\{W_x\}_{x \in X})$ and $\pindex(W) = \prod_{n \in \omega} I_n$ with 

\begin{enumerate}[(i)]

\item each $I_n \neq \emptyset$; and

\item each $W \upharpoonright_p I_n \in \Pfine(\{W_x\}_{x \in X})$

\end{enumerate}

\noindent then there exists $U \in \Red_{c}$ and coi $\iota$ from $W$ to $U$ such that $$\{\coi(W_x, \iota_x, U_x)\}_{x \in X} \cup\{\coi(W \iota, U)\}$$ is coherent.

\item If $U \in \Red_{a, b} \setminus \Pfine(\{U_x\}_{x \in X})$ and $\pindex(U) = \prod_{n \in \omega} I_n$ with 

\begin{enumerate}[(i)]

\item each $I_n \neq \emptyset$; and

\item each $U \upharpoonright_p I_n \in \Pfine(\{U_x\}_{x \in X})$

\end{enumerate}

\noindent then there exists $W \in \Red_{a, b}$ and coi $\iota$ from $W$ to $U$ such that $$\{\coi(W_x, \iota_x, U_x)\}_{x \in X} \cup\{\coi(W, \iota, U)\}$$ is coherent.
\end{enumerate}
\end{lemma}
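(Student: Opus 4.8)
\emph{The plan.} The plan is to prove (1); part (2) is the mirror image, interchanging the roles of $\Red_{a,b}$ and $\Red_c$, of the two cases of Lemma~\ref{smallrepresentative}, of Lemmas~\ref{passtorefinement} and \ref{passtorefinementbackwards}, and of the two relative normal closures, so I comment on it only at the end. Write $W_n\equiv W\upharpoonright_p I_n$, so $W\equiv\prod_{n\in\omega}W_n$ with each $W_n\in\Pfine(\{W_x\}_{x\in X})$ and each $W_n\not\equiv E$. First I would record block-by-block data: build an increasing chain of coherent standardized collections by setting $C_0=\{\coi(W_x,\iota_x,U_x)\}_{x\in X}$ and, given $C_n$, applying Lemma~\ref{smallrepresentative}(1) to $W_n$ with parameter $N=n$; this is legitimate since every word added so far lies in $\Pfine(\{W_x\}_{x\in X})$ or $\Pfine(\{U_x\}_{x\in X})$, which are p-fine (Lemmas~\ref{fine}, \ref{fineC}), so the two p-fine hulls never change. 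One obtains $U_n\in\Pfine(\{U_x\}_{x\in X})$, a coi $\iota_n$ from $W_n$ to $U_n$, $d(U_n)\ge n$, $\dom(\iota_n)\neq\emptyset$, and a coherent standardized $C_{n+1}:=C_n\cup\{\coi(W_n,\iota_n,U_n)\}$. By Lemma~\ref{ascendingchaincoi}, $C_\omega:=\bigcup_n C_n$ is coherent.

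\emph{Assembling the new triple.} For $n\ge1$ I would choose $m_n\ge n$ distinct from the subscripts of the last p-piece of $U_{n-1}$ and the first p-piece of $U_n$ (whenever those exist); set $V_0\equiv E$, $V_n\equiv c_{m_n}$, and $U\equiv\prod_n(V_nU_n)$, a legitimate word since $d(U_n),m_n\to\infty$. A maximal cancellation scheme argument (Lemma~\ref{cancellationreduces}) shows no buffer $c_{m_n}$ is ever matched: a matched buffer whose partner has the fewest intervening buffers would, by clause (4) of Definition~\ref{cancellation}, have to be matched with an immediate neighbour lying in $U_{n-1}$ or $U_n$, against the choice of $m_n$; consequently no cancellation straddles a buffer, so $U$ is reduced with $\pindex(U)=\prod_n(\pindex(V_n)\pindex(U_n))$. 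Put $\iota:=\bigcup_n\iota_n$, an order isomorphism; since each $\dom(\iota_n)$ (resp.\ $\ran(\iota_n)$) is close and nonempty in $I_n$ (resp.\ in $\pindex(U_n)$), and adjoining the finite sets $\pindex(V_n)$ preserves closeness, Lemma~\ref{basiccloseproperties}(iii) gives $\Close(\dom(\iota),\pindex(W))$ and $\Close(\ran(\iota),\pindex(U))$. Thus $\coi(W,\iota,U)$ is a coi triple, and it remains to prove $C_\omega\cup\{\coi(W,\iota,U)\}$ coherent (a subcollection of a coherent collection is coherent).

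\emph{Coherence, the easy case.} Only requirements involving $\coi(W,\iota,U)$ are new. For a $W$-side match $W\upharpoonright_p K\equiv(W^\circ\upharpoonright_p K^\circ)^i$ with $K$ an interval of $\pindex(W)$, the decomposition $K=\prod_n(K\cap I_n)$ is the p-decomposition of $W\upharpoonright_p K$ and each $W\upharpoonright_p(K\cap I_n)$ is a p-chunk of $W_n$. If $K$ (and, when $W^\circ=W$, also $K^\circ$) meets only finitely many $I_n$, then $W\upharpoonright_p K\in\Pfine(\{W_x\}_{x\in X})$, and since $\varpropto(K,\iota)$ exceeds $\prod_n\varpropto(K\cap I_n,\iota_n)$ only by finitely many buffer pieces—pure, hence $\beth_c$-trivial—one computes, via Lemma~\ref{gettingstarted}(1) and the definition (Lemma~\ref{welldefined}) of the homomorphism $\phi_0$ attached to $C_\omega$, that $[[U\upharpoonright_p\varpropto(K,\iota)]]=\phi_0(W\upharpoonright_p K)$. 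The same identity holds on the other side (with no finiteness needed when $W^\circ$ is a word of $C_\omega$, since then $W^\circ\upharpoonright_p K^\circ$ is a p-chunk of a $C_\omega$-word), so by the homomorphism property (Lemma~\ref{homomorphism}) the required equality follows. The $U$-side requirements are treated by the same dichotomy using $\phi_1$, Lemma~\ref{passtorefinementbackwards}, and the $\beth_{a,b}$-triviality of finite words.

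\emph{The main obstacle.} The hard case is a $W$-side match with $W^\circ=W$ and both $K$ and $K^\circ$ meeting infinitely many $I_n$; then $W\upharpoonright_p K$ and $W\upharpoonright_p K^\circ$ are genuinely infinite words, and one cannot simply invoke $\phi_0$. Here $W\notin\Pfine(\{W_x\}_{x\in X})$ forces every terminal p-chunk $T_m:=W_{m+1}W_{m+2}\cdots$ of $W$ to lie outside $\Pfine(\{W_x\}_{x\in X})$, and for all large $m$ one can write $W\upharpoonright_p K\equiv P_m T_m$ with $P_m\in\Pfine(\{W_x\}_{x\in X})$ a finite concatenation of p-chunks of the $W_n$; crucially $T_m$, being a word, is finite-to-one, hence not periodic, so this factorization of $W\upharpoonright_p K$ with right factor $T_m$ is \emph{unique}. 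On the $U$-side the buffer choice makes $\varpropto(K,\iota)$ and $\varpropto(\prod_{n>m}I_n,\iota)$ differ from terminal p-chunks of $U$ only by finitely many pure pieces and finite prefixes, so the easy case applied to $P_m$ together with $\beth_c$-triviality of finite words yields $[[U\upharpoonright_p\varpropto(K,\iota)]]=\phi_0(P_m)\cdot\tau_m$ where $\tau_m:=[[U\upharpoonright_p\varpropto(\prod_{n>m}I_n,\iota)]]$ depends only on $W$ and $m$, not on $K$. Writing $W\upharpoonright_p K^\circ\equiv P_m^\circ T_m$ with the same $T_m$, the hypothesis $W\upharpoonright_p K\equiv(W\upharpoonright_p K^\circ)^i$ forces, when $i=1$, $P_m\equiv P_m^\circ$ by uniqueness of the factorization, whence $[[U\upharpoonright_p\varpropto(K,\iota)]]=\phi_0(P_m)\tau_m=\phi_0(P_m^\circ)\tau_m=[[U\upharpoonright_p\varpropto(K^\circ,\iota)]]$; the case $i=-1$ is handled by the analogous factorization of the inverted words, using that a coi between $\Lambda$ and $\Theta$ induces one between $\Lambda^{-1}$ and $\Theta^{-1}$. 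The symmetric analysis of the $U$-side completes the verification of coherence, proving (1); part (2) follows by the mirror construction.
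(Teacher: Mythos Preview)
Your argument has a genuine gap: you never use the hypothesis $|X| < 2^{\aleph_0}$, and this is not cosmetic.  On the $W$-side your dichotomy works because the standing assumption $W \notin \Pfine(\{W_x\}_{x\in X})$ forces any interval $K$ with $W\upharpoonright_p K$ a p-chunk of some $W_x^{\pm 1}$ to meet only finitely many $I_n$.  But on the $U$-side there is no such assumption: you \emph{built} $U$, and with your buffers $V_n\equiv c_{m_n}$ chosen only to separate consecutive blocks, nothing prevents a terminal p-chunk of $U$ from coinciding with a p-chunk of some $U_y^{\pm 1}$ for $y\in X$.  In that event you would face a coherence requirement of the form $U\upharpoonright_p K \equiv (U_y\upharpoonright_p K^\circ)^i$ with $K$ containing a tail of $\pindex(U)$; then $\varpropto(K,\iota^{-1})$ contains a tail of $\pindex(W)$, so $W\upharpoonright_p\varpropto(K,\iota^{-1})\notin\Pfine(\{W_x\}_{x\in X})$ and your identity $[[W\upharpoonright_p\varpropto(K,\iota^{-1})]]=\phi_1(U\upharpoonright_p K)$ is unavailable.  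Your claim that ``the symmetric analysis of the $U$-side completes the verification'' therefore does not go through.

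The paper closes exactly this hole by exploiting $|X|<2^{\aleph_0}$: instead of arbitrary single-letter buffers, it lets each buffer be $c_n$ or $c_n^2$ and observes that, for each fixed $V\in\Red_c$ and each $m$, there are only finitely many ways a tail-pattern of $U$ can embed in $V$; since there are fewer than $2^{\aleph_0}$ words $U_x^{\pm 1}$ but $2^{\aleph_0}$ binary sequences $\sigma:\omega\to\{1,2\}$, one may choose $\sigma$ so that \emph{no} nonempty terminal p-chunk of $U\equiv\prod_n U_n c_n^{\sigma(n)}$ is a p-chunk of any $U_x^{\pm 1}$.  This purchases the missing fact $U\notin\Pfine(\{U_x\}_{x\in X})$ (indeed no terminal p-chunk of $U$ lies in $\Pfine$), after which your dichotomy becomes genuinely symmetric and the rest of your outline---the block-by-block coi's, $\iota=\bigcup_n\iota_n$, the $\phi_0/\phi_1$ computation for finitely-meeting intervals, and the ``tail is aperiodic so $K=K^\circ$'' endgame---matches the paper's proof essentially verbatim.
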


\begin{proof}[Proof of part (1)]
Write $W_n \equiv W \upharpoonright_p I_n$ for each $n \in \omega$.  By Lemma \ref{smallrepresentative} (1) select a nonempty $U_0 \in \Pfine(\{U_x\}_{x \in X})$ with $d(U_0) > 0$ and a coi $\iota_0$ from $W_0$ to $U_0$ such that $\dom(\iota_0) \neq \emptyset$ and $\{\coi(W_x, \iota_x, U_x)\}_{x \in X} \cup\{\coi(W_0, \iota_0, U_0)\}$ is coherent.  Assuming we have chosen $U_{\ell - 1}$ and $\iota_{\ell - 1}$, we choose by Lemma \ref{smallrepresentative} (1) a nonempty $U_{\ell} \in \Pfine(\{U_x\}_{x \in X})$ with $d(U_{\ell}) > \ell$ and coi $\iota_{\ell}$ from $W_{\ell}$ to $U_{\ell}$ such that $\dom(\iota_{\ell}) \neq \emptyset$ and $\{\coi(W_x, \iota_x, U_x)\}_{x \in X} \cup\{\coi(W_n, \iota_n, U_n)\}_{n = 0}^{\ell}$ is coherent.  By Lemma \ref{ascendingchaincoi} the collection $\{\coi(W_x, \iota_x, U_x)\}_{x \in X} \cup \{\coi(W_n, \iota_n, U_n)\}_{n \in \omega}$ is coherent.  Also it is clear that $\Pfine(\{W_x\}_{x \in X} \cup \{W_n\}_{n \in \omega}) = \Pfine(\{W_x\}_{x \in X})$ and $\Pfine(\{U_x\}_{x \in X} \cup \{U_n\}_{n \in \omega}) = \Pfine(\{U_x\}_{x \in X})$.

We will now choose a sequence $V_n$ of words, with either $V_n \equiv c_n$ or $V_n \equiv c_n^2$.  Note that for any such choice it is clear that the word $\prod_{n \in \omega} U_nV_n$ is reduced and moreover $\pindex(\prod_{n \in \omega} U_nV_n) = \prod_{n \in \omega} \pindex(U_n)\pindex(V_n)$.  Note that for any $V \in \Red_c$ and $m \in \omega$ there are only finitely many order monotonic functions (either strictly increasing or strictly decreasing) $f: \omega \setminus \{0, 1, \ldots, m - 1\} \rightarrow \pindex(U)$ such that 

\begin{itemize}

\item $V \upharpoonright_p \{f(r)\}$ is $r$-pure for each $r \in \omega \setminus \{0, 1, \ldots, m - 1\}$; and

\item $d(V \upharpoonright_p (\min\{f(r), f(r + 1)\}, \max\{f(r), f(r + 1)\})) > r + 1$.

\end{itemize}

\noindent Indeed, once one knows the value $f(m)$ and whether the function $f$ is increasing or decreasing, then the function is totally determined.  Therefore it is possible to select $\sigma: \omega \rightarrow \{1, 2\}$ such that the word $$U \equiv \prod_{n \in \omega} U_nc_n^{\sigma(n)}$$ has no nontrivial terminal p-chunk which is a p-chunk of some element in $\{U_x^{\pm 1}\}_{x \in X} \cup \{U_n^{\pm 1}\}_{n \in \omega}$ (from the fact that $|X| + |\omega| < 2^{\aleph_0}$).  Thus by Lemma \ref{elementsofthegeneratedsubgroupC} there is no nonempty terminal p-chunk of $U$ which is in the group $\Pfine(\{U_x\}_{x \in X})$ but since the collection $\{\coi(W_x, \iota_x, U_x)\}_{x \in X} \cup \{\coi(W_n, \iota_n, U_n)\}_{n \in \omega}$ is standardized we know that every proper initial p-chunk of $U$ is an element of $\Pfine(\{U_x\}_{x \in X})$.

Let $\iota = \bigcup_{n \in \omega} \iota_n$ and clearly this is a coi from $W$ to $U$, as applying Lemma \ref{basiccloseproperties} (iii) one sees that indeed $\Close(\dom(\iota)), \pindex(W))$ and $\Close(\ran(\iota), \pindex(U))$.  It now remains to check that the collection $$\{\coi(W_x, \iota_x, U_x)\}_{x \in X} \cup \{\coi(W_n, \iota_n, U_n)\}_{n \in \omega} \cup \{\coi(W, \iota, U)\}$$ is coherent, from which the coherence of $\{\coi(W_x, \iota_x, U_x)\}_{x \in X} \cup \{\coi(W, \iota, U)\}$ follows.  Suppose that $x \in X \cup \omega$, $I \subseteq \pindex(W)$, $I' \subseteq \pindex(W_x)$ and $i \in \{-1, 1\}$ are such that $W \upharpoonright_p I \equiv (W_x \upharpoonright_p I')^i$.  Take $f: I \rightarrow (I')^i$ to be an order isomorphism such that $W \upharpoonright_p \{\lambda\} \equiv (W_x \upharpoonright_p \{f(\lambda)\})^i$.  By the assumptions of (1) we know that there is some $N \in \omega$ such that $I \subseteq I_0\cdots I_N$.  Thus we write

$$
\begin{array}{ll}
[[U \upharpoonright_p \varpropto(I, \iota)]]  & = \prod_{\ell = 0}^N[[U \upharpoonright_p \varpropto(I \cap I_{\ell}, \iota)]] \\
& = \prod_{\ell = 0}^N[[U_{\ell} \upharpoonright_p \varpropto(I \cap I_{\ell}, \iota_{\ell})]]  \\
& = \prod_{\ell \in \{0, \ldots, N\}^i} [[(U_x \upharpoonright_p \varpropto(f(I \cap I_{\ell}) , \iota_x))^i]] \\
& = [[(U_x \upharpoonright_p I')^i]]

\end{array}
$$

\noindent where $\{0, \ldots, N\}^i$ is the set under the reverse order in case $i = -1$.  The first equality is clear, the second is by how $\iota$ is defined, the third is by the coherence of the collection $\{\coi(W_x, \iota_x, U_x)\}_{x \in X} \cup \{\coi(W_n, \iota_n, U_n)\}_{n \in \omega}$, the fourth is evident.  Suppose on the other hand that $I, I' \subseteq \pindex(W)$ and $i \in \{-1, 1\}$ are such that $W \upharpoonright_p I \equiv (W \upharpoonright_p I')^i$.  We consider two cases, assuming without loss of generality that $I \neq \emptyset$ (hence $I' \neq \emptyset$).

\noindent \textbf{Case: $I$ includes into a proper initial subinterval of $\pindex(W)$.}  In this case pick $r, N \in \omega$ for which $I \subseteq I_r \cdots I_N$ and $I \cap I_r \neq \emptyset \neq I \cap I_N$.  As $W \upharpoonright I \in \Pfine(\{W_n\}_{n \in \omega}) \subseteq \Pfine(\{W_x\}_{x \in X})$, by the assumptions of (1) we know $I'$ is also included in a proper initial subinterval of $\pindex(W)$, say $I \subseteq I_{s'} \cdots I_{N'}$ with $I \cap I_{s'} \neq \emptyset \neq I \cap I_{N'}$.  Let $f: I \rightarrow (I')^i$ be an order isomorphism with $W_{\lambda} \equiv (W_{f(\lambda)})^i$.  Let $I = \overline{I_0} \cdots \overline{I_{N''}}$ be the mutual refinement of $I$ with respect to $(I \cap I_s) \cdots (I \cap I_N)$ and $\prod_{r \in \{s', \ldots, N'\}^i} f^{-1}(I' \cap I_r)$.  Note that $\prod_{t \in \{0, N''\}^i} f(\overline{I_t})$ is the mutual refinement of $I'$ with respect to $(I' \cap I_{s'}) \cdots (I' \cap I_{N'})$ and $\prod_{\ell \in \{s, \ldots, N\}^i} f(I \cap I_{\ell})$.  Write $q(t) = \ell$ if $\overline{I_t} \subseteq I \cap I_{\ell}$ and $q'(t) = r$ if $f(I_t) \subseteq I' \cap I_r$.

Then

$$
\begin{array}{ll}
[[U \upharpoonright_p \varpropto(I, \iota)]]  & = \prod_{\ell = s}^N[[U \upharpoonright_p \varpropto(I \cap I_{\ell}, \iota)]] \\
& = \prod_{\ell = s}^N[[U_{\ell} \upharpoonright_p \varpropto(I \cap I_{\ell}, \iota_{\ell})]]  \\
& = \prod_{t = 0}^{N''} [[U_{q(t)} \upharpoonright_p \varpropto(\overline{I_t}, \iota_{q(t)})]]  \\
& = \prod_{t \in \{0, \ldots, N''\}^i} [[(U_{q'(t)} \upharpoonright_p \varpropto(f(\overline{I_t}), \iota_{q'(t)}))^i]]  \\
& = \prod_{r \in \{s', \ldots, N'\}^i} [[(U_r \upharpoonright_p \varpropto(I_r, \iota_r))^i]]\\
& = [[(U \upharpoonright_p \varpropto(I', \iota))^i]].
\end{array}
$$

\noindent \textbf{Case: $I$ does not include into a proper initial subinterval of $\pindex(W)$.}  In this case $I$ is a nonempty terminal interval in $\pindex(W)$.  Then by hypotheses of (1) we know every proper initial p-chunk of $W \upharpoonright_p I$ is an element of $\Pfine(\{W_x\}_{x \in X})$ and every nonempty terminal p-chunk of $W \upharpoonright_p I$ is not an element of $\Pfine(\{W_x\}_{x \in X})$, and this is also the case for $(W\upharpoonright_p I')^i \equiv W \upharpoonright_p I$.  Therefore $i = 1$ and $I'$ is also a nonempty terminal interval in $\pindex(W)$.  We claim $I = I'$ (from which $[[U \upharpoonright_p \varpropto(I, \iota)]] = [[U \upharpoonright_p \varpropto(I', \iota)]]$ is trivial).

To see that $I = I'$ suppose for contradiction, and without loss of generality, that $I$ properly includes $I'$ and $f: I \rightarrow I'$ is an order isomorphism such that $W_{\lambda} \equiv W_{f(\lambda)}$.  Select $\lambda \in I \setminus I'$ and note that $\lambda < f(\lambda) < f(f(\lambda)) < \cdots$, and $W_{\lambda} \equiv W_{f(\lambda)} \equiv W_{f(f(\lambda))} \equiv \cdots$.  Thus the letters in the nonempty word $W_{\lambda}$ are used infinitely often in the word $W$, a contradiction.  This concludes the argument in this case.

One can also check that for $x \in X \cup \omega$, intervals $I \subseteq \pindex(U)$ and $I' \subseteq \pindex(U_x)$, and $i \in \{-1, 1\}$ such that $U \upharpoonright_p I \equiv (U_x \upharpoonright_p I')^i$ we have $[[W \upharpoonright_p \varpropto(I, \iota^{-1})]] = [[(W_x \upharpoonright_p \varpropto(I', \iota_x^{-1}))^i]]$; and also if $I, I' \subseteq \pindex(U)$ and $i \in \{-1, 1\}$ are such that $U \upharpoonright_p I \equiv (U \upharpoonright_p I')^i$ then $[[W \upharpoonright_p \varpropto(I, \iota^{-1})]] = [[(W \upharpoonright_p \varpropto(I', \iota^{-1})^i]]$.  The checks in these cases follow a mutatis mutandis of the arguments which immediately preceded, using the fact that each proper initial p-chunk of $U$ is an element of $\Pfine(\{U_x\}_{x \in X})$ and each nonempty terminal p-chunk of $U$ is not an element of $\Pfine(\{U_x\}_{x \in X})$.  So, we conclude the proof of Lemma \ref{typeomega} (1).
\end{proof}

\begin{proof}[Proof of part (2)]
This claim follows along the same arguments as part (1), except in the choosing of $W$ and $\iota$.  Let $U_n \equiv U \upharpoonright_p I_n$ for each $n \in \omega$.  By Lemma \ref{smallrepresentative} (2) inductively select nonempty $W_n \in \Pfine(\{W_x\}_{x \in X})$ and $\iota_n$, $\dom(\iota_n) \neq \emptyset$, with $d(W_n) > n$ with $\{\coi(W_x, \iota_x, U_x)\}_{x \in X} \cup \{\coi(W_n, \iota_n, U_n)\}_{n = 0}^{\ell}$ coherent for each $\ell \in \omega$.  Now $\{\coi(W_x, \iota_x, U_x)\}_{x \in X} \cup \{\coi(W_n, \iota_n, U_n)\}_{n \in \omega}$ is coherent, and $$\Pfine(\{W_x\}_{x \in X} \cup \{W_n\}_{n \in \omega}) =\Pfine(\{W_x\}_{x \in X})$$ and $$\Pfine(\{U_x\}_{x \in X} \cup \{U_n\}_{n \in \omega}) =\Pfine(\{U_x\}_{x \in X}).$$

To build the word $W$ we select a sequence $V_n$ of words in $\Red_{a, b}$.  Let $V_n'$ be $a_n$, $b_n$, $a_nb_n$, or $b_na_n$ simply so as to have $\pindex(W_nV_n'W_{n + 1}) = \pindex(W_n)\pindex(V_n')\pindex(W_{n + 1})$.  For example, if $W_n$ has a nonempty terminal subword which is $a$-pure and $W_{n + 1}$ has a nonempty terminal initial subword which is $b$-pure then we let $V_n' \equiv b_na_n$.  We will have $V_n \equiv V_n'$ or $V_n \equiv (V_n')^2$.  For either choice of exponent we have $\prod_{n \in \omega} W_nV_n$ is a reduced word and $\pindex(\prod_{n \in \omega} W_nV_n) = \prod_{n \in \omega} \pindex(W_n)\pindex(V_n)$.  If $\Lambda$ is a totally ordered set we let $FI(\Lambda)$ be the collection of nonempty intervals in $\Lambda$ of finite cardinality, with a partial order $\prec$ where $I \prec I'$ if all elements of $I$ are below all elements of $I'$.  Note that for any $V \equiv_p \prod_{\lambda \in \Lambda} V_{\lambda} \in \Red_{a, b}$ and $m \in \omega$ there are only finitely many order monotonic functions $f: \omega \setminus \{0, 1, \ldots, m-1\} \rightarrow FI(\Lambda)$ such that

\begin{itemize}

\item $f(r)$ has cardinality at most $4$;

\item $d(V \upharpoonright_p \{\lambda\}) = r$ for each $\lambda \in f(r)$ and $r \in \omega \setminus \{0, \ldots, m - 1\}$; and

\item $d(V \upharpoonright_p (\max(\min_{\prec}\{f(r), f(r + 1)\}), \min(\max_{\prec}\{f(r), f(r + 1)\})) > r + 1$.

\end{itemize}

\noindent Once $f(m)$ is known, and whether $f$ is increasing or decreasing, we have determined $f$.  Select $\sigma: \omega \rightarrow \{1, 2\}$ such that the word $$W \equiv \prod_{n \in \omega} W_n(V_n')^{\sigma(n)}$$ has no nonempty terminal p-chunk which is a p-chunk of some element in $\{W_x^{\pm 1}\}_{x \in X} \cup \{W_n^{\pm 1}\}_{n \in \omega}$.  Then every nonempty terminal p-chunk of $W$ is not an element of $\Pfine(\{W_x\}_{x \in X})$, but each proper initial p-chunk of $W$ is in $\Pfine(\{W_x\}_{x \in X})$.

Let $\iota = \bigcup_{n \in \omega} \iota_n$ and argue as in part (1) that the collection $\{\coi(W_x, \iota_x, \iota_x)\} \cup \{\coi(W_n, \iota_n, U_n)\}_{n \in \omega} \cup \{\coi(W, \iota, U)\}$ is coherent.
\end{proof}
\end{section}

\begin{section}{$\mathbb{Q}$-type concatenations}\label{nondiscrete}

In this section we prove the following.

\begin{lemma}\label{typeQ}
Suppose that $\{\coi(W_x, \iota_x, U_x)\}_{x \in X}$ is coherent and standardized and that $|X| < 2^{\aleph_0}$.  Each of (1) and (2) below holds.

\begin{enumerate}

\item Suppose further that $W \in \Red_{a, b}$ and $\pindex(W) = \prod_{q \in \mathbb{Q}} I_q$ with

\begin{enumerate}[(i)]

\item each $I_q \neq \emptyset$;

\item $W \upharpoonright_p I_q \in \Pfine(\{W_x\}_{x \in X})$ for each $q \in \mathbb{Q}$; and

\item $W \upharpoonright_p (\bigcup_{\lambda \in \Lambda} I_q) \notin \Pfine(\{W_x\}_{x \in X})$ for each interval $\Lambda \subseteq \mathbb{Q}$ with more than one point. 

\end{enumerate}

\noindent Then there exist $U \in \Red_c$ and coi $\iota$ from $W$ to $U$ such that $$\{\coi(W_x, \iota_x, U_x)\}_{x \in X} \cup \{\coi(W, \iota, U)\}$$ is coherent.

\item Suppose further that $U \in \Red_c$ and $\pindex(U) = \prod_{q \in \mathbb{Q}} I_q$ with

\begin{enumerate}[(i)]

\item each $I_q \neq \emptyset$;

\item $U \upharpoonright_p I_q \in \Pfine(\{U_x\}_{x \in X})$ for each $q \in \mathbb{Q}$; and

\item $U \upharpoonright_p (\bigcup_{q \in \Theta} I_q) \notin \Pfine(\{U_x\}_{x \in X})$ for each interval $\Theta \subseteq \mathbb{Q}$ with more than one point. 

\end{enumerate}

\noindent Then there exist $W \in \Red_{a, b}$ and coi $\iota$ from $W$ to $U$ such that $$\{\coi(W_x, \iota_x, U_x)\}_{x \in X} \cup \{\coi(W, \iota, U)\}$$ is coherent.

\end{enumerate}

\end{lemma}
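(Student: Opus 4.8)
\emph{Overview.} I would prove part (1) in the style of Lemma~\ref{typeomega}, with $\mathbb{Q}$ replacing $\omega$; part (2) is then the mirror image, interchanging the roles of the $W$'s and $U$'s and of the two alphabets. Enumerate $\mathbb{Q}=\{q_n\}_{n\in\omega}$ and put $W_q\equiv W\upharpoonright_p I_q$; by hypothesis (i) each $W_q$ is nonempty and by (ii) each $W_q\in\Pfine(\{W_x\}_{x\in X})$. Running through the enumeration and applying Lemma~\ref{smallrepresentative}~(1) at each stage, choose nonempty $U_{q_n}\in\Pfine(\{U_x\}_{x\in X})$ with $d(U_{q_n})>n$ and coi $\iota_{q_n}$ from $W_{q_n}$ to $U_{q_n}$ with $\dom(\iota_{q_n})\neq\emptyset$, keeping $\{\coi(W_x,\iota_x,U_x)\}_{x\in X}\cup\{\coi(W_{q_m},\iota_{q_m},U_{q_m})\}_{m\le n}$ coherent. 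By Lemma~\ref{ascendingchaincoi} the union $\mathcal{C}':=\{\coi(W_x,\iota_x,U_x)\}_{x\in X}\cup\{\coi(W_q,\iota_q,U_q)\}_{q\in\mathbb{Q}}$ is coherent, and since each added word lies in the appropriate p-fine subgroup, neither $\Pfine(\{W_x\}_{x\in X})$ nor $\Pfine(\{U_x\}_{x\in X})$ changes. The basic rigidity observation, used throughout, is that if $J\subseteq\pindex(W)$ is an interval with $W\upharpoonright_p J\in\Pfine(\{W_x\}_{x\in X})$ then $J\subseteq I_q$ for a single $q$: otherwise, by convexity of $J$ in $\prod_q I_q$ and the fact that a $\mathbb{Q}$-interval with two points is infinite, $J$ would contain $\bigcup_{q\in\Lambda'}I_q$ for the interior $\Lambda'$ (with $|\Lambda'|>1$) of the $\mathbb{Q}$-interval it spans, and $W\upharpoonright_p(\bigcup_{q\in\Lambda'}I_q)$, a p-chunk of an element of the p-fine group $\Pfine(\{W_x\}_{x\in X})$, would contradict (iii). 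The same reasoning shows that an order isomorphism between two p-chunks of $W$ each meeting at least two of the $I_q$ must carry whole interior blocks onto whole blocks.

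\emph{Constructing $U$.} Imitating Lemma~\ref{typeomega}, build $U$ with tag letters inserted: set $U\equiv\prod_{q\in\mathbb{Q}}U_q c_{m(q)}^{\sigma(q)}$, where $q\mapsto m(q)$ grows rapidly along the enumeration --- arranging along the way (via Lemma~\ref{smallrepresentative}) that $d(U_{q_n})$ exceeds all earlier tag depths, so that the letter $c_{m(q_n)}$ occurs only in $U_{q_0},\dots,U_{q_n}$ and in the tag at $q_n$ --- and $\sigma\colon\mathbb{Q}\to\{1,2\}$ is to be chosen. For any $\sigma$ the word $U$ is reduced with $\pindex(U)=\prod_q\pindex(U_q c_{m(q)}^{\sigma(q)})$, since cancellation or pure-chunk merging across a block boundary would force a nonempty intermediate block to be trivial, which the $d$-growth precludes. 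The purpose of the tags is to secure the $U$-side analogue of the rigidity observation: that $U\upharpoonright_p J'\in\Pfine(\{U_x\}_{x\in X})$ forces $J'$ into a single block. By Lemma~\ref{elementsofthegeneratedsubgroupC} and p-fineness, a failure of this would produce rationals $a<b$, an $x\in X$, and an order-preserving or order-reversing embedding of $\prod_{q\in(a,b)}U_q c_{m(q)}^{\sigma(q)}$ as a p-chunk of $U_x$. \textbf{This is the step I expect to be the main obstacle:} choosing $\sigma$ to block every such embedding simultaneously, using $|X|+\aleph_0<2^{\aleph_0}$. It is more delicate than the corresponding choice in Lemma~\ref{typeomega}, because here a matched p-chunk spans only a proper subinterval of $\mathbb{Q}$, so a naive count constrains $\sigma$ only on $\{n:q_n\in(a,b)\}$ and leaves it unconstrained elsewhere; the argument must instead use that each $U_x$ is finite-to-one together with the separation of the tag depths, so that an embedding, once pinned down on the tag chunks, is determined --- leaving only countably many bad $\sigma$ for each pair consisting of an $x\in X$ and a rational interval.

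\emph{Checking coherence.} With $\sigma$ fixed, set $\iota:=\bigcup_{q\in\mathbb{Q}}\iota_q$; by Lemma~\ref{basiccloseproperties}~(iii) (every $\iota_q$ has nonempty domain) this is a coi from $W$ to $U$, and it remains to verify that $\mathcal{C}'\cup\{\coi(W,\iota,U)\}$ is coherent, whence $\{\coi(W_x,\iota_x,U_x)\}_{x\in X}\cup\{\coi(W,\iota,U)\}$ is coherent. There are the usual four families of matchings. A matching $W\upharpoonright_p I\equiv(W_x\upharpoonright_p I')^i$ with $x\in X\cup\mathbb{Q}$ places $W\upharpoonright_p I$ in $\Pfine(\{W_x\}_{x\in X})$, so $I\subseteq I_q$ by rigidity, $\varpropto(I,\iota)=\varpropto(I,\iota_q)\subseteq\pindex(U_q)$, and the required identity is an instance of the coherence of $\mathcal{C}'$; the matching $U\upharpoonright_p I\equiv(U_x\upharpoonright_p I')^j$ ($x\in X\cup\mathbb{Q}$) is dual, using the $U$-side rigidity and peeling off a tag letter as a pure (hence $[[\,\cdot\,]]$-trivial) factor when $I$ contains one. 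A matching $W\upharpoonright_p I\equiv(W\upharpoonright_p I')^i$ either has both sides inside single blocks (again reducing to $\mathcal{C}'$) or has both sides meeting at least two blocks, in which case the block-respecting property of the isomorphism matches interior blocks bijectively with $W_q\equiv W_{\rho(q)}^{\pm1}$, and $[[U\upharpoonright_p\varpropto(I,\iota)]]=[[(U\upharpoonright_p\varpropto(I',\iota))^i]]$ follows by combining the corresponding instances of the coherence of $\mathcal{C}'$ (the tags being pure) as in the proof of Lemma~\ref{homomorphism}, with only finitely many boundary blocks to reconcile separately by Lemmas~\ref{gettingstarted} and \ref{passtorefinement}; the matchings $U\upharpoonright_p I\equiv(U\upharpoonright_p I')^j$ are the same. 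Finally, part (2) is proved by the mirror construction, building $W\equiv\prod_q W_q V_q$ with each $V_q$ a word in $\{a_{m(q)},b_{m(q)},a_{m(q)}b_{m(q)},b_{m(q)}a_{m(q)}\}$ raised to a power chosen by the genericity argument analogous to the one above.
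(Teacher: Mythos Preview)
There is a genuine gap, and it is not where you flagged it. The step that fails is the self-matching coherence check for $W\upharpoonright_p I\equiv(W\upharpoonright_p I')^i$ when $I,I'$ each span infinitely many blocks $I_q$. You correctly observe that the induced bijection $\rho$ on interior rationals gives $W_q\equiv W_{\rho(q)}^{\pm1}$, and coherence of $\mathcal{C}'$ then yields $[[U_q]]=[[U_{\rho(q)}^{\pm1}]]$ for every such $q$. But these are \emph{infinitely many} identities in the quotient $\Red_c/\langle\langle\Pure_c\rangle\rangle$, and there is no mechanism for multiplying them together: the normal closure $\langle\langle\Pure_c\rangle\rangle$ permits only finitely many insertions or deletions of pure words, so from $[[A_q]]=[[B_q]]$ for all $q$ in an infinite index set one cannot conclude $[[\prod_q A_q]]=[[\prod_q B_q]]$. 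Your appeal to Lemma~\ref{homomorphism} does not help, as that lemma only handles finite decompositions. Since you choose $U_{q_n}$ according to the enumeration (with $d(U_{q_n})>n$), two rationals $q\neq q'$ with $W_q\equiv W_{q'}$ will typically receive $U_q\not\equiv U_{q'}$, and likewise your tags $c_{m(q)}^{\sigma(q)}$ depend on $q$; so the needed equality cannot be recovered at the word level either.

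The paper's construction is designed precisely to avoid this. One first lists the \emph{equivalence classes} (up to inversion) of the words $W\upharpoonright_p I_q$ as $\{W_n\}_{n\in\omega}$, writes $W\equiv\prod_{q\in\mathbb{Q}}W_{n(q)}^{i(q)}$, chooses a single $U_n$ and $\iota_n$ for each $n$ via Lemma~\ref{smallrepresentative}, and sets $T_q\equiv(V_{n(q)}U_{n(q)}V_{n(q)})^{i(q)}$ with the buffer $V_n\in\{c_n,c_n^2\}$ also depending only on $n$. Then $W_q\equiv W_{\rho(q)}^{\pm1}$ forces $(n(q),i(q))=(n(\rho(q)),\pm i(\rho(q)))$, hence $T_q\equiv T_{\rho(q)}^{\pm1}$ \emph{literally as words}, and the infinite concatenations satisfy $\prod_{q\in\Lambda_0}T_q\equiv(\prod_{q\in\Lambda_0'}T_q)^{\pm1}$ in $\Red_c$ itself, from which the $[[\cdot]]$-equality is immediate. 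A secondary consequence is that your sketch for why $U$ is reduced (``cancellation across a block boundary would force a nonempty intermediate block to be trivial'') is too brief for a $\mathbb{Q}$-indexed concatenation with no adjacent blocks: the paper requires a genuine cancellation-scheme argument (Claim~\ref{Uisreduced}) that iteratively rearranges a putative scheme until it pulls back to a nonempty scheme on $W$. Finally, the genericity step you worried about is handled in the paper not by a single global choice of $\sigma$ but by partitioning $\omega$ into sets $n(D_s)$ indexed by $(q,\pm)$ and choosing the exponent of $V_n$ along sequences $\se^{\pm}(q)$ converging to each $q$ from each side, so that every attempted embedding into some $U_x^{\pm1}$ is eventually spoiled near the relevant endpoint.
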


\begin{proof}[Proof of part (1)]
Assume the hypotheses.  Let $\{W_n\}_{n \in \omega}$ be a list of words such that for each $q \in \mathbb{Q}$ there is some $n \in \omega$ such that $W \upharpoonright_p I_q \equiv W_n$ or $W \upharpoonright_p I_q \equiv W_n^{-1}$ and for $n \neq n'$ we have $W_n \not\equiv W_{n'} \not\equiv W_n^{-1}$.  For simplicity, we require that for each $n \in \omega$ there is indeed some $q \in \mathbb{Q}$ for which $W \upharpoonright_p I_q$ is equal to $W_n$ or $W_n^{-1}$.  In particular this means each $W_n$ is nonempty and an element of $\Pfine(\{W_x\}_{x \in X})$.  As each $I_q$ is nonempty and $\mathbb{Q}$ is infinite, we know that the list $\{W_n\}_{n \in \omega}$ must necessarily be infinite and indeed $\lim_{n \rightarrow \infty} d(W_n) = \infty$.  Since $\mathbb{Q}$ is dense in itself, by condition (iii) we know that if $I \subseteq \pindex(W)$ is an interval such that $W \upharpoonright_p I \in \Pfine(\{W_x\}_{x \in X})$ then $I \subseteq I_q$ for some $q \in \mathbb{Q}$.

Since a nonempty word in $\Red_{a, b}$ is not equal to its inverse (the group $\Red_{a, b}$ is locally free and therefore torsion-free), for each $q \in \mathbb{Q}$ there is a unique choice of $n(q) \in \omega$ and $i(n) \in \{-1, 1\}$ such that $W \upharpoonright_p I_q \equiv W_{n(q)}^{i(q)}$.  Thus we may write $$W \equiv \prod_{q \in \mathbb{Q}} W_{n(q)}^{i(q)}.$$  By Lemma \ref{smallrepresentative} (1) select a word $U_0 \in \Pfine(\{U_x\}_{x \in X})$ and $\iota_0$, with $\dom(\iota_0) \neq \emptyset$ and $d(U_0) > 0$, such that $\{\coi(W_x, \iota_x, U_x)\}_{x \in X} \cup \{\coi(W_0, \iota_0, U_0)\}$ is coherent.   Assuming we have chosen $U_{\ell - 1}$ and $\iota_{\ell - 1}$, choose by Lemma \ref{smallrepresentative} (1) a $U_{\ell} \in \Pfine(\{U_x\}_{x \in X})$ with $d(U_{\ell}) > \ell$ and coi $\iota_{\ell}$ from $W_{\ell}$ to $U_{\ell}$ such that $\{\coi(W_x, \iota_x, U_x)\} \cup \{\coi(W_n, \iota_n, U_n)\}_{n = 0}^{\ell}$ is coherent.  By Lemma \ref{ascendingchaincoi} the collection $\{\coi(W_x, \iota_x, U_x)\}_{x \in X} \cup \{\coi(W_n, \iota_n, U_n)\}_{n \in \omega}$ is coherent.

We shall define a sequence $V_n$ of words in $\Red_c$, where either $V_n \equiv c_n$ or $V_n \equiv c_n^2$.  The word $U$ will be given by the expression $$U \equiv \prod_{q \in \mathbb{Q}} (V_{n(q)}U_{n(q)}V_{n(q)})^{i(q)}.$$  It will require a real argument to show that $U$ is reduced, and the $V_n$ will be chosen in a specific way so that for every interval $I \subseteq \pindex(U)$ such that $U \upharpoonright_p I \in \Pfine(\{U_x\}_{x \in X})$ there exists some $q \in \mathbb{Q}$ for which $I \subseteq \pindex((V_{n(q)}U_{n(q)}V_{n(q)})^{i(q)})$.  We'll first define the $V_n$ and then check why $U$ is reduced.  Note first of all that for each $q \in \mathbb{Q}$ the word $(V_{n(q)}U_{n(q)}V_{n(q)})^{i(q)}$ is reduced, since $d(U_{n(q)}) > n(q)$ and the word $V_{n(q)}$ is $n(q)$-pure.  For the same reason we have in fact that $\pindex((V_{n(q)}U_{n(q)}V_{n(q)})^{i(q)}) = (\pindex(V_{n(q)})\pindex(U_{n(q)})\pindex(V_{n(q)}))^{i(q)}$.  Define a totally ordered set $\Theta$ by $\Theta = \prod_{q \in \mathbb{Q}} \pindex((V_{n(q)}U_{n(q)}V_{n(q)})^{i(q)})$; once we know that $U$ is reduced it will be immediate from the order density of $\mathbb{Q}$ that $\Theta$ is order isomorphic to $\pindex(U)$.

Since the function $n(\cdot): \mathbb{Q} \rightarrow \omega$ is finite-to-one, it is easy (by induction) to find a collection $\{D_s\}_{s \in \omega}$ such that

\begin{itemize}
\item $\mathbb{Q} = \bigsqcup_{s \in \omega} D_s$;

\item each $D_s$ is dense in $\mathbb{Q}$;

\item $n(D_s) \cap n(D_{s'}) = \emptyset$ for $s \neq s'$.
\end{itemize}

\noindent Let $Z: \mathbb{Q} \times \{-1, 1\} \rightarrow \omega$ be a bijection.  For each $q \in \mathbb{Q}$ take $\se^+(q): \omega \rightarrow \mathbb{Q}$ to be a strictly decreasing sequence such that

\begin{itemize}

\item $\lim_{t \rightarrow \infty} \se^+(q)(t) = q$;

\item $(\forall t \in \omega)\se^+(q)(t) \in D_{Z(q, 1)}$;

\item $n(\se^+(q)(t))$ is strictly increasing.
\end{itemize}

\noindent Each $V \in \Red_c$ has the naturally defined function $P_V: \pindex(V) \rightarrow \omega$ where $P_V(\theta) = n$ if $V \upharpoonright_p \{\theta\}$ is $n$-pure.  Let $P: \Theta \rightarrow \omega$ be given by $P(\theta) = n$ if $$\theta \in \pindex((V_{n(q)}U_{n(q)}V_{n(q)})^{i(q)})$$ and $P_{(V_{n(q)}U_{n(q)}V_{n(q)})^{i(q)}}(\theta) = n$.  For each $t \in \omega$ and $q \in \mathbb{Q}$ define $\theta_t \in \Theta$ by $\theta_t = \min(\pindex(\se^+(q)(t)))$.

Fix $q \in \mathbb{Q}$ for the moment.  Note that for each $m \in \omega$ and $V \in \Red_c$ there are at most finitely many p-chunks $V'$ in $V$ for which there is an order isomorphism $L$ from $(\max(\pindex((V_{n(q)}U_{n(q)}V_{n(q)})^{i(q)})), \theta_m] \subseteq \Theta$ to $\pindex(V')$ with $P_V(L(\theta)) = P(\theta)$.  Indeed, once we know $\max\pindex(V')$ we know $V'$ precisely.  Since $|X| < 2^{\aleph_0}$ we can therefore select a sequence $\sigma^+(q): \omega \rightarrow \{1, 2\}$ such that for any 

\begin{itemize}

\item $V \in \{U_x^{\pm 1}\}_{x \in X}$;

\item interval $I \subseteq \pindex(V)$;

\item $m \in \omega$;

\item and order isomorphism $L$ from $(\max(\pindex((V_{n(q)}U_{n(q)}V_{n(q)})^{i(q)})), \theta_m] \subseteq \Theta$ to an interval $\pindex(V)$ with $P_V(L(\theta)) = P(\theta)$

\end{itemize}

\noindent there exists $m < m_0 \in \omega$ such that $$V \upharpoonright_p \{L(\theta_{m_0})\} \not\equiv c_{n(\sigma^+(q)(m_0))}^{\sigma^+(q)(m_0) i(\se^+(q)(m_0))}.$$  For $n \in D_{Z(q, 1)}$ let $V_n \equiv c_n^{\sigma^+(q)(t)}$ if $n = n(\se^+(q)(t))$, and otherwise (i.e. $n \in D_{Z(q, 1)}$ is not in the image of $n(\se^+(q)(\cdot)): \omega \rightarrow \omega$) let $V_n \equiv c_n$.  

Similarly for $q \in \mathbb{Q}$ take $\se^-(q): \omega \rightarrow \mathbb{Q}$ to be a strictly increasing sequence such that

\begin{itemize}

\item $\lim_{t \rightarrow \infty} \se^-(q)(t) = q$;

\item $(\forall t \in \omega)\se^-(q)(t) \in D_{Z(q, -1)}$;

\item $n(\se^-(q)(t))$ is strictly increasing.

\end{itemize}

Define $\theta_t^- \in \Theta$ by $\theta_t^- = \max(\pindex(\se^-(q)(t)))$.  As $|X| < 2^{\aleph_0}$ select a sequence $\sigma^-(q): \omega \rightarrow  \{1, 2\}$ such that for any

\begin{itemize}

\item $V \in \{U_x^{\pm 1}\}_{x \in X}$;

\item interval $I \subseteq \pindex(V)$;

\item $m \in \omega$;

\item and order isomorphism $L$ from $[\theta_m^-, \min(\pindex((V_{n(q)}U_{n(q)}V_{n(q)})^{i(q)}))) \subseteq \Theta$ to some interval $I \subseteq \pindex(V)$ with $P_V(L(\theta)) = P(\theta)$

\end{itemize}

\noindent there exists $m < m_0 \in \omega$ such that $$V \upharpoonright_p \{L(\theta_{m_0}^-)\} \not\equiv c_{n(\sigma^-(q)(m_0))}^{\sigma^-(q)(m_0) i(\se^-(q)(m_0))}.$$  For $n \in D_{Z(q, -1)}$ let $V_n \equiv c_n^{\sigma^-(q)(t)}$ if $n = n(\se^-(q)(t))$, and otherwise (i.e. $n \in D_{Z(q, -1)}$ is not in the image of $n(\se^-(q)(\cdot)): \omega \rightarrow \omega$) let $V_n \equiv c_n$.

Now that we have done this for every $q \in \mathbb{Q}$, we have defined $V_n$ for each $n \in \omega$ and hence have fully defined $U \in \W_c$.  For brevity of notation, write $T_q \equiv (V_{n(q)}U_{n(q)}V_{n(q)})^{i(q)}$, so that $$U \equiv \prod_{q \in \mathbb{Q}} T_q.$$  It is clear that $T_q \in \Red_c$ for each $q \in \mathbb{Q}$.  We verify that $U$ is itself reduced.

\begin{claim}\label{Uisreduced}  $U \in \Red_c$.
\end{claim}

\begin{proof}
Recall that $U$ is a function with domain $\overline{U}$ and $U: \overline{U} \rightarrow \{c_m^{\pm 1}\}_{m \in \omega}$.  We shall suppose for contradiction that $\mathcal{S} \subseteq \overline{U} \times \overline{U}$ is a nonempty cancellation scheme on $U$ (recall Definition \ref{cancellation}).  We shall modify $\mathcal{S}$ into a potentially more intuitive cancellation scheme $\mathcal{S}_{\infty}$, which will pull back to a nonempty cancellation scheme on $W$, giving a contradiction.  If $\overline{\mathcal{S}}$ is a reduction scheme on a word $V$ and $V'$ is a subword of $V$ then write $\pa(V', \overline{\mathcal{S}})$ if for some $r_0 \in \overline{V'}$ there is an $r_1 \in \overline{V}$ for which $\langle r_0, r_1\rangle$ or $\langle r_1, r_0 \rangle$ is in $\overline{\mathcal{S}}$.

Take $N_0 = \min\{n \in \omega \mid (\exists q \in \mathbb{Q}) \pa(T_q, \mathcal{S}) \wedge n = n(q)\}$.  Let $Y = \{q \in \mathbb{Q} \mid n(q) = N_0 \wedge \pa(T_q, \mathcal{S})\}$ and let $Y = \{q_0, \ldots, q_k\}$ list the elements of $Y$ in increasing order.  As $d(U_{N_0}) > N_0$ and $V_{N_0}$ is $N_0$-pure, $T_{q_0}$ is reduced, and by minimaity of $N_0$, we know that there is some $q_j$ with $j > 0$ such that $r_0 = \max\overline{T_{q_0}}$ is paired with one of the first two elements, or one of the last two elements, of $\overline{T_{q_j}}$ in $\mathcal{S}$.  Say $\langle r_0, r_1 \rangle \in \mathcal{S}$, so $i(q_{0, 0}) = -i(q(0, j))$.  Since $U \upharpoonright (r_0, r_1) \sim E$, by counting the number of occurrences of $c_{N_0}$ and of $c_{N_0}^{-1}$, we know that $j$ is odd and $r_1 = \min\overline{T_{q_{0, j}}}$.  Let $\mathcal{S}'$ be the cancellation scheme which includes $\{\langle r, r' \rangle \mid r_0 < r < r_1 \wedge \langle r, r' \rangle \in \mathcal{S}\}$ and which pairs the elements of $\overline{T_{q_{0, 0}}}$ with their inverse counterpart in $\overline{T_{q_{0, j}}}$.

Now we have a nonempty cancellation scheme $\mathcal{S}'$ on $U$ such that if $\pa(T_q, \mathcal{S}')$ then each point in $\overline{T_q}$ appears in an element of $\mathcal{S}'$.  Without loss of generality $\mathcal{S} = \mathcal{S}'$ has this property.  Now we proceed with the argument.  Let $\mathcal{S}_0 = \mathcal{S}$.  Once again define $N_0 = \min\{n \in \omega \mid (\exists q \in \mathbb{Q}) \pa(T_q, \mathcal{S}) \wedge n = n(q)\}$. Let $X_0 = \{q \in \mathbb{Q} \mid n(q) = N_0 \wedge \pa(T_q, \mathcal{S}_0)\}$.  It is easy to see that there is some $q_{0, 0} \in X_0$ whose successor $q_{0, 1}$ in $X_0$ (under the natural order) has $\max \overline{T_{q_{0, 0}}}$ paired up with $\min \overline{T_{q_{0, 1}}}$ in $\mathcal{S}$ (arguing as above).  Then $i(q_{0, 0}) = -i(q_{0, 1})$ and let $f: \overline{T_{q_{0, 0}}} \rightarrow \overline{T_{q_{0, 1}}}$ be order reversing such that $T_{q_{0, 0}}(r)$ is the inverse of $T_{q_{0, 1}}(f(r))$.  Let

$$
\begin{array}{ll}
\mathcal{S}_0^{(1)} & = \{\langle r_0, r_1\rangle \in \mathcal{S}_0 \mid r_0, r_1 \notin \overline{T_{q_{0, 0}}}\cup \overline{T_{q_{0, 1}}}\}\\
& \cup \{\langle r_0, f(r_0) \rangle \mid r_0 \in \overline{T_{q_{0, 0}}}\}\\
& \cup \{\langle r_0, r_1 \rangle \in \overline{U} \times \overline{U} \mid (\exists r_2 \in \overline{T_{q_{0, 0}}}) \langle r_0, r_2\rangle, \langle f(r_2), r_1\rangle \in \mathcal{S}_0\}\\
& \cup \{\langle r_0, r_1 \rangle \in \overline{U} \times \overline{U} \mid (\exists r_2 \in \overline{T_{q_{0, 0}}})\langle r_1, r_2\rangle, \langle r_0, f(r_2) \rangle \in \mathcal{S}_0\}\\
& \cup \{\langle r_0, r_1\rangle \in \overline{U} \times \overline{U} \mid (\exists r_2 \in \overline{T_{q_{0, 0}}})\langle r_2, r_1\rangle, \langle f(r_2), r_0\rangle \in \mathcal{S}_0\}.
\end{array}
$$

Now $\mathcal{S}_0^{(1)}$ is a nonempty cancellation scheme on $U$ for which $\pa(T_q, \mathcal{S}_0^{(1)})$ implies each point in $\overline{T_q}$ appears in an element of $\mathcal{S}_0^{(1)}$, and the elements of $\overline{T_{q_{0, 0}}}$ are paired with those of $\overline{T_{q_{0, 1}}}$ and vice versa.  If $X_0^{(0)} := X_0 \setminus \{q_{0, 0}, q_{0, 1}\}$ is empty then we let $\mathcal{S}_1 = \mathcal{S}_0^{(0)}$, else we find $q_{0, 2}, q_{0, 3} \in X_0^{(1)}$ with  $q_{0, 3}$ the successor of $q_{0, 2}$ in $X_0^{(1)}$ such that $\max\overline{T_{0, 2}}$ is paired with $\min\overline{T_{0, 3}}$ in $\mathcal{S}_0^{(1)}$.  Define as before a new scheme $\mathcal{S}_0^{(2)}$ which pairs the elements of $\overline{T_{0, 2}}$ with those of $\overline{T_{0, 3}}$ and define $X_0^{(2)} = X_0^{(1)} \setminus \{q_{0, 2}, q_{0, 3}\}$.  Continue until $X_0^{(j)} = \emptyset$ and let $\mathcal{S}_1 = \mathcal{S}_0^{(j_0)}$.

Let $N_1 = \min\{n \in \omega \mid (\exists q \in \mathbb{Q}) \pa(T_q, \mathcal{S}_1) \wedge n = n(q) > N_0\}$, and note that the latter set is nonempty (by the order density of $\mathbb{Q}$) so indeed $N_1$ exists.  Let $X_1 = \{q \in \mathbb{Q} \mid n(q) = N_1 \wedge \pa(T_q, \mathcal{S}_1)\}$.  Define $\mathcal{S}_1^{(1)}, \ldots, \mathcal{S}_1^{(j_1)}$ as was done with $\mathcal{S}_0^{(1)}$, etc. to obtain $\mathcal{S}_1^{(j_1)} = \mathcal{S}_2$ so that the elements of $X_1$ are paired up in such a way that 

\begin{center}
$q$ is paired with $q'$ if and only the elements of $\overline{T_q}$ are paired with those in $\overline{T_{q'}}$ under $\mathcal{S}_2$.
\end{center}

\noindent Proceed in these modifications to create $\mathcal{S}_3, \mathcal{S}_4, \ldots$ and sets $X_2, X_3, \ldots$.  Let $\mathcal{S}_{\infty} = \limsup_{\ell \rightarrow \infty} \mathcal{S}_{\ell} = \liminf_{\ell \rightarrow \infty} \mathcal{S}_{\ell} = \bigcup_{j \in \omega} \bigcap_{\ell \geq j} \mathcal{S}_{\ell}$.  Now $\mathcal{S}_{\infty}$ is a nonempty cancellation scheme on $U$ and if $q \in \mathbb{Q}$ has $\pa(T_q, \mathcal{S}_{\infty})$ there is another $q' \in \mathbb{Q}$ with $\pa(T_{q'}, \mathcal{S}_{\infty})$ with $n(q) = n(q')$, $i(q) = -i(q')$, and the elements of $\overline{T_q}$ are paired with the elements of $\overline{T_{q'}}$ under $\mathcal{S}_{\infty}$.  This pairing of elements in $X_{\infty} = \bigcup_{\ell} X_{\ell}$ satisfies properties directly analogous to those of a cancellation scheme, and it is easy to see that this witnesses a nonempty cancellation scheme on $W$ (involving precisely the domains of those subwords $W\upharpoonright I_q$ with $q \in X_{\infty}$), which contradicts the fact that $W$ is reduced.  The proof of Claim \ref{Uisreduced} is complete.
\end{proof}

Now that we know that $U$ is reduced, as was pointed out earlier we may use $\Theta$ for $\pindex(U)$.  By how the $V_n$ were selected, we know that if interval $I \subseteq \Theta$ is such that $U \upharpoonright_p I \in \Pfine(\{U_x\}_{x \in X})$ then there is some $q \in \mathbb{Q}$ for which $I \subseteq \pindex(T_q)$.  The coi $\iota$ from $\pindex(W)$ to $\Theta$ is defined in the most natural way using the sequence of coi $\{\iota_n\}_{n \in \omega}$.  More specifically for each $q \in \mathbb{Q}$ we take $f_q: \pindex(W_{n(q)}) \rightarrow I_q^{i(q)}$ to be an order isomorphism such that  $W_{n(q)} \upharpoonright_p \{\lambda\} \equiv (W \upharpoonright_p \{f_q(\lambda)\})^{i(q)}$.  Similarly let $g_q$ be an order isomorphism with domain $\pindex(U_{n(q)})$ and range $(\pindex(T_q) \setminus \{\min \pindex(T_q), \max \pindex(T_q)\})^{i(q)}$ such that $U_{n(q)} \upharpoonright_p \{\theta\} \equiv (T_q \upharpoonright_p \{g_q(\theta)\} )^{i(q)}$.  Let $\dom(\iota) = \bigcup_{q \in \mathbb{Q}} f_q(\dom(\iota_{n(q)}))$ and $\ran(\iota) = \bigcup_{q \in \mathbb{Q}} g_q(\ran(\iota_{n(q)}))$ and $$\iota(\lambda) = g_q \circ \iota_{n(q)}\circ f_q^{-1}(\lambda).$$

That $\Close(\dom(\iota), \pindex(W))$ and $\Close(\ran(\iota), \Theta)$ are clear by Lemma \ref{basiccloseproperties} (iii), and that $\iota$ is an order isomorphism is clear to see.  It remains to check coherence.

\begin{claim}\label{Qcoi1}  The collection $$\{\coi(W_x, \iota_x, U_x)\}_{x \in X} \cup \{\coi(W_n, \iota_n, U_n)\}_{n \in \omega} \cup \{\coi(W, \iota, U)\}$$ is coherent.
\end{claim}

\begin{proof}
Suppose that $x \in X \cup \omega$, $I \subseteq \pindex(W)$, $I' \subseteq \pindex(W_x)$ and $i \in \{-1, 1\}$ are such that $W \upharpoonright_p I \equiv (W_x \upharpoonright_p I')^i$.  As evidently $W \upharpoonright_p I \in \Pfine(\{W_x\}_{x \in X} \cup \{W_n\}_{n \in \omega}) = \Pfine(\{W_x\}_{x \in X})$, we know by hypothesis (1) (iii) that $I \subseteq I_q$ for some $q \in \mathbb{Q}$.  Therefore

$$
\begin{array}{ll}
[[U \upharpoonright_p \varpropto(I, \iota)]]  & = [[U \upharpoonright_p \varpropto(I, g_q \circ \iota_{n(q)}\circ f_q^{-1})]]\\
& = [[(U_{n(q)} \upharpoonright_p \varpropto(f_q^{-1}(I), \iota_{n(q)}))^{i(q)}]]\\
& = [[(U_x \upharpoonright_p \varpropto(I', \iota_x))^i]]

\end{array}
$$

\noindent where the last equality holds because $\{\coi(W_x, \iota_x, U_x)\}_{x \in X} \cup \{\coi(W_n, \iota_n, U_n)\}_{n \in \omega}$ is coherent and $(W_{n(q)} \upharpoonright_p f_q^{-1}(I))^{i(q)} \equiv W \upharpoonright_p I \equiv (W_x \upharpoonright_p I')^i$.

Suppose now that $I, I' \subseteq \pindex(W)$ and $i \in \{-1, 1\}$ are such that $W \upharpoonright_p I \equiv (W \upharpoonright_p I')^i$.  Let $\Lambda_0, \Lambda_0' \subseteq \mathbb{Q}$ be the intervals given by $\Lambda_0 = \{q \in \mathbb{Q} \mid I_q \cap I \neq \emptyset\}$ and $\Lambda_0' = \{q \in \mathbb{Q} \mid I_q \cap I' \neq \emptyset\}$.  Note the following hold in case $i = 1$ (and respective modifications are given parenthetically for the case $i = -1$):

\begin{itemize}

\item $\Lambda_0$ is empty if and only if $\Lambda_0'$ is empty;

\item $\Lambda_0$ is infinite if and only if $W \upharpoonright_p I \not\in \Pfine(\{W_x\}_{x \in X})$ if and only if $\Lambda_0'$ is infinite;

\item $\Lambda_0$ has a maximal element if and only if $W \upharpoonright_p I$ has a maximal-under-set-inclusion terminal nonempty word which is in $\Pfine(\{W_x\}_{x \in X})$ if and only if $\Lambda_0'$ has a maximal (resp. minimal) element;

\item $\Lambda_0$ has a minimal element if and only if $W \upharpoonright_p I$ has a maximal-under-set-inclusion initial nonempty word which is in $\Pfine(\{W_x\}_{x \in X})$ if and only if $\Lambda_0'$ has a minimal (resp. maximal) element.
\end{itemize}

\noindent Instead of handling a multitude of cases, we'll show that $[[U \upharpoonright_p \varpropto(I, \iota)]] = [[U \upharpoonright_p \varpropto(I', \iota)]]$ in the most elaborate, illustrative case where $i = -1$, $\Lambda_0$ is infinite and has a maximum and no minimum.  Thus $\Lambda_0'$ is infinite and has a minimum and no maximum.  Note the following equivalences

$$
\begin{array}{ll}
W\upharpoonright_p I & \equiv (\prod_{q \in \Lambda_0 \setminus \{\max\Lambda_0\}} W \upharpoonright_p I_q)(W \upharpoonright_p (I_{\max\Lambda_0} \cap I))\\
W \upharpoonright_p I' & \equiv (W \upharpoonright_p (I_{\min\Lambda_0'} \cap I'))(\prod_{q \in \Lambda_0' \setminus \{\min\Lambda_0'\}} W \upharpoonright_p I_q)\\
\prod_{q \in \Lambda_0\setminus \{\max\Lambda_0\}} W \upharpoonright_p I_q & \equiv (\prod_{q \in \Lambda_0' \setminus \{\min\Lambda_0'\}} W \upharpoonright_p I_q)^{-1}\\
W \upharpoonright_p (I_{\max\Lambda_0} \cap I) & \equiv (W \upharpoonright_p (I_{\min\Lambda_0'} \cap I'))^{-1}
\end{array}
$$

\noindent  We have

$$
\begin{array}{ll}
[[U \upharpoonright_p \varpropto(I_{\max\Lambda_0} \cap I, \iota)]] & = [[U_{n(\max\Lambda_0)} \upharpoonright_p \varpropto(f_{\max\Lambda_0}^{-1}(I_{\max\Lambda_0} \cap I), \iota_{n(\max\Lambda_0)})]]\\
& = [[(U_{n(\min\Lambda_0')} \upharpoonright_p \varpropto(f_{\min\Lambda_0'}^{-1}(I_{\min\Lambda_0'} \cap I'), \iota_{n(\min\Lambda_0')}))^{-1}]]\\
& = [[(U \upharpoonright_p \varpropto(I_{\min\Lambda_0'} \cap I', \iota))^{-1}]]
\end{array}
$$

\noindent since $\{\coi(W_n, \iota_n, U_n)\}_{n \in \omega}$ is coherent.  Also since $\Lambda_0 \setminus \{\max\Lambda_0\}$ and $\Lambda_0' \setminus \{\min \Lambda_0'\}$ are each order isomorphic to $\mathbb{Q}$ it is clear that $$\varpropto(\prod_{q \in \Lambda_0 \setminus \{\max\Lambda_0\}}I_q, \iota) = \prod_{q \in \Lambda_0 \setminus\{\max\Lambda_0 \}} \pindex(T_q)$$ and $$\varpropto(\prod_{q \in \Lambda_0 \setminus\{\min\Lambda_0'\}} I_q, \iota) = \prod_{q \in \Lambda_0' \setminus \{\min\Lambda_0'\}}\pindex(T_q).$$  Thus

$$
\begin{array}{ll}
U \upharpoonright_p \varpropto(\prod_{q \in \Lambda_0 \setminus \{\max\Lambda_0\}}I_q, \iota) & \equiv \prod_{q \in \Lambda_0 \setminus \{\max\Lambda_0\}} T_q\\
& \equiv (\prod_{q \in \Lambda_0' \setminus \{\min\Lambda_0'\}} T_q)^{-1}\\
& \equiv (U \upharpoonright_p \varpropto(\prod_{q \in \Lambda_0' \setminus \{\min\Lambda_0'\}} I_q, \iota))^{-1}.
\end{array}
$$

\noindent We obtain

$$
\begin{array}{ll}
[[U \upharpoonright_p I]] & = [[\prod_{q \in \Lambda_0 \setminus \{\max\Lambda_0\}} T_q]][[U \upharpoonright_p \varpropto(I_{\max\Lambda_0} \cap I, \iota)]]\\
& = [[(\prod_{q \in \Lambda_0' \setminus \{\min\Lambda_0'\}} T_q)^{-1}]][[(U \upharpoonright_p \varpropto(I_{\min\Lambda_0'} \cap I', \iota))^{-1}]]\\
& = [[(U \upharpoonright_p I')^{-1}]].
\end{array}
$$

One can also check that for $x \in X \cup \omega$, intervals $I \subseteq \pindex(U)$ and $I' \subseteq \pindex(U_x)$, and $i \in \{-1, 1\}$ such that $U \upharpoonright_p I \equiv (U_x \upharpoonright_p I')^i$ we have $[[W \upharpoonright_p \varpropto(I, \iota^{-1})]] = [[(W_x \upharpoonright_p \varpropto(I', \iota_x^{-1}))^i]]$; and also if $I, I' \subseteq \pindex(U)$ and $i \in \{-1, 1\}$ are such that $U \upharpoonright_p I \equiv (U \upharpoonright_p I')^i$ then $[[W \upharpoonright_p \varpropto(I, \iota^{-1})]] = [[(W \upharpoonright_p \varpropto(I', \iota^{-1})^i]]$.  This is done using the same strategy, using the important fact that if interval $I \subseteq \pindex(U)$ is such that $U \upharpoonright_p I \in \Pfine(\{U_x\}_{x \in X})$ then there is some $q \in \mathbb{Q}$ for which $I \subseteq \pindex(T_q)$.  Thus Claim \ref{Qcoi1} is proved.
\end{proof}

The proof of part (1) is now finished.
\end{proof}

\begin{proof}[Proof of part (2)]
The proof of part (2) is quite similar and we will spare the details when the arguments are nearly identical.  Take $U$ and a decomposition $\pindex(U) = \prod_{q \in \mathbb{Q}} I_q$ as in the hypotheses.  Let $\{U_n\}_{n \in \omega}$ be a list of words in $\Pfine(\{U_x\}_x \in X)$ such that

\begin{itemize}

\item for each $q \in \mathbb{Q}$ there is $n \in \omega$ with $U \upharpoonright_p I_q \equiv U_n$ or $U \upharpoonright_p I_q \equiv U_n^{-1}$;

\item for $n \neq n'$ we have $U_n \not\equiv U_{n'} \not\equiv U_n^{-1}$;

\item for each $n \in \omega$ there exists $q \in \mathbb{Q}$ with $U \upharpoonright_p I_q \equiv U_n$ or $U \upharpoonright_p I_q \equiv U_n^{-1}$. 

\end{itemize}

\noindent Define functions $n(\cdot): \mathbb{Q} \rightarrow \omega$ and $i(\cdot): \mathbb{Q} \rightarrow \{-1, 1\}$ by $U \upharpoonright_p I_q \equiv U_{n(q)}^{i(q)}$.  Pick by induction (Lemma \ref{smallrepresentative} (2)) a sequence $\{\coi(W_n, \iota_n, U_n)\}_{n \in \omega}$ of coi triples such that

\begin{itemize}

\item $W_n \in \Pfine(\{W_x\}_{x \in X})$;

\item $d(W_n) > n$;

\item $\iota_n$ has nonempty domain;

\item $\{\coi(W_x, \iota_x, U_x)\}_{x \in X} \cup \{\coi(W_n, \iota_n, U_n)\}_{n \in \omega}$ is coherent.

\end{itemize}

For each $n \in \omega$ we select words $V_{n, 0}, V_{n, 1}, V_n \in \Red_{a, b}$.  Let $V_{n, 0} \equiv b_n$ if $W_n$ has a nonempty initial subword which is $a$-pure, otherwise let $V_{n, 0} \equiv E$.  Let $V_{n, 1} \equiv b_n$ if $W_n$ has a nonempty terminal subword which is $a$-pure, otherwise let $V_{n, 1} \equiv E$.  The word $V_n$ will either be $a_n$ or $a_n^2$, chosen to ensure that the word $W \equiv \prod_{q \in \mathbb{Q}} (V_{n(q)}V_{n(q), 0}W_{n(q)}V_{n(q), 1}V_{n(q)})^{i(q)}$ does not have large p-chunks which are in $\Pfine(\{W_x\}_{x \in X})$.  The words $V_{n, 0}$ and $V_{n, 1}$ simply serve as a `buffer' so that the reduced word $V_nV_{n, 0}W_nV_{n, 1}V_n$ will have the property $$\pindex(V_nV_{n, 0}W_nV_{n, 1}V_n) = \pindex(V_n)\pindex(V_{n, 0})\pindex(W_n)\pindex(V_{n, 1})\pindex(V_n).$$  We have defined the words $V_{n, 0}$ and $V_{n, 1}$, and it remains to define $V_n$.  Define totally ordered set $\Lambda$ by $$\Lambda = \prod_{q \in \mathbb{Q}}\pindex((V_{n(q)}V_{n(q), 0}W_{n(q)}V_{n(q), 1}V_{n(q)})^{i(q)}).$$  The selection of $V_n$ differs slightly from how it was done in the proof of (1), particularly the functions $P_V$ and $P$ are a little different.

As in part (1) construct a collection $\{D_s\}_{s \in \omega}$ such that

\begin{itemize}
\item $\mathbb{Q} = \bigsqcup_{s \in \omega} D_s$;

\item each $D_s$ is dense in $\mathbb{Q}$;

\item $n(D_s) \cap n(D_{s'}) = \emptyset$ for $s \neq s'$
\end{itemize}

\noindent and let $Z: \mathbb{Q} \times \{-1, 1\} \rightarrow \omega$ be a bijection.  For each $q \in \mathbb{Q}$ pick $\se^+(q): \omega \rightarrow \mathbb{Q}$ to be a strictly decreasing sequence such that

\begin{itemize}

\item $\lim_{t \rightarrow \infty} \se^+(q)(t) = q$;

\item $(\forall t \in \omega)\se^+(q)(t) \in D_{Z(q, 1)}$;

\item $n(\se^+(q)(t))$ is strictly increasing.
\end{itemize}

\noindent Note that if $V \in \Red_{a, b}$ we have a function $P_V: \pindex(V) \rightarrow \omega$ given by $P_v(\lambda) = d(V \upharpoonright_p \{\lambda\})$.  We point out that although this function looks formally different from its counterpart in the proof of part (1), its definition actually yields the same values for elements in $\Red_c$ since for $z \in \mathbb{Z} \setminus \{0\}$ we have $d(c_n^z) = n$ is $n$-pure.  Let $P: \Lambda \rightarrow \omega$ be given by $P(\lambda) = n$ if $$\lambda \in \pindex((V_{n(q)}V_{n(q), 0}W_{n(q)}V_{n(q), 1}V_{n(q)})^{i(q)})$$ and $$P_{(V_{n(q)}V_{n(q), 0}W_{n(q)}V_{n(q), 1}V_{n(q)})^{i(q)}}(\lambda) = n.$$

Fix $q \in \mathbb{Q}$.  Define $\lambda_t \in \Lambda$ by $\lambda_t = \min(\pindex(\se^+(q)))$, and from the fact that $|X| < 2^{\aleph_0}$ we select $\sigma^+(q): \omega \rightarrow \{1, 2\}$ such that for any

\begin{itemize}

\item $V \in \{W_x^{\pm 1}\}_{x \in X}$;

\item interval $I \subseteq \pindex(V)$;

\item $m \in \omega$;

\item and order isomorphism $L$ from $$(\max(\pindex((V_{n(q)}V_{n(q), 0}U_{n(q)}V_{n(q), 1}V_{n(q)})^{i(q)})), \lambda_m] \subseteq \Lambda$$ to some interval $I \subseteq \pindex(V)$ with $P_V(L(\lambda)) = P(\lambda)$

\end{itemize}

\noindent there exists $m < m_0 \in \omega$ such that $$V \upharpoonright_p \{L(\lambda_{m_0}^-)\} \not\equiv a_{n(\sigma^+(q)(m_0))}^{\sigma^+(q)(m_0) i(\se^+(q)(m_0))}.$$  For $n \in D_{Z(q, 1)}$ let $V_n \equiv a_n^{\sigma^+(q)(t)}$ if $n = n(\se^+(q)(t))$ and otherwise let $V_n \equiv a_n$.

Define for each $q \in \mathbb{Q}$ the analogous $\se^-(Q)$ as in part (1), together with $\sigma^-(q) : \omega \rightarrow \{1, 2\}$ satisfying the comparable qualities.  For $n \in D_{Z(q, -1)}$ let $V_n \equiv a_n^{\sigma^-(q)(t)}$ if $n = n(\se^-(q)(t))$ and otherwise $V_n \equiv a_n$.  We have $W \in \Red_{a, b}$ by arguing as in part (1) with the obvious modifications.  Also, by how the $\sigma^+$ and $\sigma^-$ were defined, we know that if $I \subseteq \pindex(W)$ is such that $W \upharpoonright_p I 
\in \Pfine(\{W_x\}_{x \in X})$ then $I \subseteq \pindex((V_{n(q)}V_{n(q), 0}U_{n(q)}V_{n(q), 1}V_{n(q)})^{i(q)})$ for some $q \in \mathbb{Q}$.  Define a coi $\iota$ from $W$ to $U$ precisely as in part (1), and the check that $$\{\coi(W_x, \iota_x, U_x)\}_{x \in X} \cup \{\coi(W_n, \iota_n, U_n)\}_{n \in \omega} \cup \{\coi(W, \iota, U)\}$$ is coherent is the same.  This completes the proof of Lemma \ref{typeQ} (2).
\end{proof}
\end{section}

\begin{section}{The proof of Theorem \ref{themain}}\label{Concludingarg}

We prove the following lemma and then give the proof of Theorem \ref{themain}.

\begin{lemma}\label{arbextension}
Suppose that $\{\coi(W_x, \iota_x, U_x)\}_{x \in X}$ is coherent and standardized and that $|X| < 2^{\aleph_0}$.  Each of the following holds.

\begin{enumerate}
\item  If $W \in \Red_{a, b}$ then there exists $U \in \Red_c$ and coi $\iota$ from $W$ to $U$ such that $\{\coi(W_x, \iota_x, U_x)\}_{x \in X}$ is coherent.

\item If $U \in \Red_c$ then there exists $W \in \Red_{a, b}$ and coi $\iota$ from $W$ to $U$ such that $\{\coi(W_x, \iota_x, U_x)\}_{x \in X}$ is coherent.

\end{enumerate}
\end{lemma}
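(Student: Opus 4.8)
The plan is to prove (1) by transfinite induction on a suitable "complexity" of the word $W$, reducing the general case to the three extension lemmas already established: Lemma \ref{findsomerepresentative}/\ref{smallrepresentative} (the case $W \in \Pfine(\{W_x\}_{x \in X})$), Lemma \ref{typeomega} ($\omega$-type concatenations of p-chunks lying in $\Pfine$), and Lemma \ref{typeQ} ($\mathbb{Q}$-type concatenations). The key structural observation is that the p-decomposition $p^*(W)$ can be analyzed relative to the subgroup $G = \Pfine(\{W_x\}_{x \in X})$: call an interval $I \subseteq p^*(W)$ \emph{good} if $W \upharpoonright_p I \in G$. First I would handle the case $W \in G$ by Lemma \ref{findsomerepresentative}(1) (or \ref{smallrepresentative}(1)). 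Otherwise $p^*(W)$ itself is not good, and I would stratify $p^*(W)$ by the equivalence relation $\lambda \sim \lambda'$ iff the smallest interval containing both is good; the equivalence classes are convex, each carries a word in $G$, and the quotient linear order $\Lambda^{\flat}$ has the property that every nonempty interval of more than one point is not good. I would further need to observe that $\Lambda^{\flat}$, being a countable linear order, embeds order-densely into a structure built from $\omega$-type and $\mathbb{Q}$-type pieces — more precisely, I would decompose the problem so that each "chunk" encountered is either a single $G$-element, an $\omega$-indexed concatenation of $G$-elements, or a $\mathbb{Q}$-indexed concatenation of $G$-elements, then build $U$ and $\iota$ by iterating the three lemmas and gluing via Lemma \ref{ascendingchaincoi}.

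The precise execution: I would run a transfinite recursion of length at most $\omega_1$ (or exhibit a rank function on countable linear orders). At a successor stage, if the word currently under consideration lies in the p-fine closure of the coi collection already constructed, apply Lemma \ref{smallrepresentative}(1) and stop that branch; if it is an $\omega$-type concatenation of such words, apply Lemma \ref{typeomega}(1); if it contains a $\mathbb{Q}$-dense family of such words, apply Lemma \ref{typeQ}(1). The hypotheses of Lemmas \ref{typeomega} and \ref{typeQ} require $|X| < 2^{\aleph_0}$, which is preserved since at each stage we add only countably many coi triples (the induction within each lemma is countable), and the total length of the recursion is at most $\omega_1 \le 2^{\aleph_0}$, so the running cardinality stays below $2^{\aleph_0}$ throughout; at limit stages take unions and invoke Lemma \ref{ascendingchaincoi} for coherence. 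At the end, $W$ appears (up to the harmless finite modifications that the lemmas allow, i.e.\ removal/insertion of finitely many pure letters, which do not change the class $[[\cdot]]$) as a p-chunk of some word in the final collection, and we extract the desired $\coi(W, \iota, U)$ from it, noting that $W \in \Pfine$ of the final (still size $< 2^{\aleph_0}$) collection so Lemma \ref{smallrepresentative}(1) applies to pull out the triple for $W$ itself.

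Part (2) is entirely symmetric, using parts (2) of Lemmas \ref{smallrepresentative}, \ref{typeomega}, \ref{typeQ}, and the same ascending-union argument; no new ideas are needed and I would simply say "\emph{mutatis mutandis}."

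The main obstacle I anticipate is the combinatorial book-keeping in the stratification step: verifying that an arbitrary countable linear order $p^*(W)$, once one quotients by "good" intervals, really does decompose into layers each of which is of the three handled types, and that this decomposition is well-founded so the recursion terminates. Concretely, one must show that if $p^*(W)$ has no nontrivial good interval yet is not a single point, then it has a dense subset on which one can apply the $\mathbb{Q}$-type lemma, or else it breaks into an $\omega$-chain — and iterating this reduces a genuine ordinal rank. This is where the argument is most delicate, because countable linear orders can be intricate (e.g.\ shuffles of $\omega$, $\omega^*$, $\mathbb{Q}$, and finite orders), and one needs the recursion scheme to cover all of them while keeping the coherence bookkeeping (which p-chunks of which words match which) consistent across stages; the standardization hypothesis is what lets the gluing words ($c_n$, $a_n$, $b_n$ buffers) be absorbed freely, so I would lean on that repeatedly.
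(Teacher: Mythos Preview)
Your overall strategy matches the paper's: transfinite recursion of length $\leq \omega_1$, iterated applications of Lemmas \ref{findsomerepresentative}, \ref{typeomega}, \ref{typeQ}, unions at limits via Lemma \ref{ascendingchaincoi}, cardinality bookkeeping using that $2^{\aleph_0}$ has uncountable cofinality, and a final application of Lemma \ref{findsomerepresentative} once $W$ lands in the p-fine closure. The symmetry for (2) is exactly as you say.

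Your anticipated obstacle---that the quotient order $\Lambda^\flat$ might be a complicated countable linear order requiring a well-founded rank---is real, but the paper dissolves it rather than confronting it. The recursion is organized into two \emph{sequential} phases rather than the interleaved scheme you sketch. In Step~1, \emph{only} Lemma \ref{typeomega} is used: at each successor stage, if some $\lambda \in p^*(W)$ fails to lie in a \emph{maximal} good interval (relative to the current collection $C_\alpha$), then there is a strictly increasing $\omega$-chain of good intervals with fixed endpoint $\lambda$ whose union is not good, and one applies Lemma \ref{typeomega} to that union. Termination at some countable stage is by pigeonhole: if the process ran through all of $\omega_1$, the map sending each stage $\alpha$ to the pair $(\lambda, \text{direction})$ would be $\omega_1$-to-countable, hence constant on an uncountable set, producing an uncountable strictly nested chain of intervals inside the countable set $p^*(W)$.

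Once Step~1 terminates, every point of $p^*(W)$ lies in a maximal good interval, and these partition $p^*(W)$. The key observation is that the quotient order is then automatically \emph{dense}: two adjacent maximal good intervals would concatenate to a strictly larger good interval, contradicting maximality. Being countable, dense, and nonempty, the quotient is order-isomorphic to $\mathbb{Q}$ after discarding a possible maximum and minimum. Hence a \emph{single} application of Lemma \ref{typeQ} suffices, after which $W$ is a product of at most three elements of the p-fine closure and Lemma \ref{findsomerepresentative} finishes. So you need no structure theory for countable linear orders: the $\omega$-type iterations eat away all discreteness, and whatever remains is forced to be $\mathbb{Q}$.
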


\begin{proof}
(1)  Assume the hypotheses.  Let $\Lambda = \pindex(W)$.  If $\Lambda = \emptyset$ then $W \equiv E$ and we let $U \equiv E$ and $\iota$ be the empty function.  If $\Lambda \neq \emptyset$ then for each $\lambda \in \Lambda$ we add the coi triple $\coi(W \upharpoonright_p \{\lambda\}, \iota_{\lambda}, E)$ to the collection $\{\coi(W_x, \iota_x, U_x)\}_{x \in X}$.  Letting $X'$ denote the new index for this enlarged collection it is easy to see that the new collection $\{\coi(W_x, \iota_x, U_x)\}_{x \in X'}$ is coherent.  If $C$ is a coherent collection of coi triples we let $h(C)$ denote the set of words that appear in the first coordinate of the elements of $C$.  Let $C_0 = \{\coi(W_x, \iota_x, U_x)\}_{x \in X'}$, so $h(C_0) = \{W_x\}_{x \in X'}$.  Note that each $\lambda \in \Lambda$ is contained in an interval $I$, namely $I = \{\lambda\}$, of $\Lambda$ such that $W \upharpoonright_p I \in \Pfine(h(C_0))$.  Also, $|C_0| < 2^{\aleph_0}$.  Using the lemmas that have been proved so far we will extend to coherent collections $C_0 \subseteq C_1 \subseteq C_2, \ldots, C_{\alpha}$ where $\alpha$ is a countable ordinal.  Let $\omega_1$ denote the collection of countable ordinals.  Our construction will consist in two major steps.

\noindent\textbf{Step 1.}

Suppose that we have defined coherent coi collections $C_{\gamma}$ for all $\gamma < \alpha \in \omega_1$ to be nesting, $\subseteq$-increasing and such that $|C_{\gamma}| < 2^{\aleph_0}$.  If $\alpha > 0$ is a limit ordinal then let $C_{\alpha} = \bigcup_{\gamma < \alpha} C_{\alpha}$, and note that $C_{\alpha}$ is coherent by Lemma \ref{ascendingchaincoi}, and $|C_{\alpha}| < 2^{\aleph_0}$ since $2^{\aleph_0}$ is not of countable cofinality \cite[Theorem 3.11]{Jech}.  If each $\lambda \in \Lambda$ is contained in a maximal interval $I_{\lambda} \subseteq \Lambda$ for which $W \upharpoonright_p I_{\lambda} \in \Pfine(h(C_{\alpha}))$ then proceed to Step 2 (by maximal we mean that there is no strictly larger interval $\Lambda \supseteq I \supseteq I_{\lambda}$ for which $W \upharpoonright_p I \in \Pfine(h(C_0))$).  Else we execute the following procedure.

\noindent \textbf{Case (i).}  There exists $\lambda \in \Lambda$ and a sequence of intervals $\{I_{\ell}\}_{\ell \in \omega}$ such that $I_{\ell}$ is a strict subset of $I_{\ell + 1}$ such that

\begin{itemize}

\item $\lambda = \min I_{\ell}$;

\item $W \upharpoonright_p I_{\ell} \in \Pfine(h(C_{\alpha}))$;

\item $W \upharpoonright_p \bigcup_{\ell \in \omega} I_{\ell} \notin \Pfine(h(C_{\alpha}))$.

\end{itemize}

\noindent In this case select such a $\lambda$ and sequence $\{I_{\ell}\}_{\ell \in \omega}$ and write $W_{\alpha} \equiv W \upharpoonright_p \bigcup_{\ell \in \omega}I_{\ell}$.  By Lemma \ref{typeomega} (1) select $U_{\alpha} \in \Red_c$ and coi $\iota$ from $W_{\alpha}$ to $U_{\alpha}$ such that $C_{\alpha + 1} = C_{\alpha} \cup \{\coi(W_{\alpha}, \iota_{\alpha}, U_{\alpha})\}$ is coherent.

\noindent \textbf{Case (ii).}  If Case (i) fails then there exists $\lambda \in \Lambda$ and a sequence of intervals $\{I_{\ell}\}_{\ell \in \omega}$ such that $I_{\ell}$ is a strict subset of $I_{\ell + 1}$ such that

\begin{itemize}

\item $\lambda = \max I_{\ell}$;

\item $W \upharpoonright_p I_{\ell} \in \Pfine(h(C_{\alpha}))$;

\item $W \upharpoonright_p \bigcup_{\ell \in \omega} I_{\ell} \notin \Pfine(h(C_{\alpha}))$.

\end{itemize}

\noindent In this case we select such a $\lambda$ and sequence $\{I_{\ell}\}_{\ell \in \omega}$ and write $W_{\alpha} \equiv W \upharpoonright_p \bigcup_{\ell \in \omega}I_{\ell}$.  By Lemma \ref{typeomega} (1) (applied to $W_{\alpha}^{-1}$) select $U_{\alpha} \in \Red_c$ and coi $\iota$ from $W_{\alpha}$ to $U_{\alpha}$ such that $C_{\alpha + 1} = C_{\alpha} \cup \{\coi(W_{\alpha}, \iota_{\alpha}, U_{\alpha})\}$ is coherent.

We claim that for some $\alpha \in \omega_1$ this procedure has terminated and we have moved on to Step (2) (in other words, each $\lambda \in \Lambda$ is contained in a maximal interval $I_{\lambda} \subseteq \Lambda$ for which $W \upharpoonright_p I_{\lambda} \in \Pfine(h(C_{\alpha}))$).  Indeed, if this is not the case we define a function $f: \omega_1 \rightarrow \Lambda \times \{(i), (ii)\}$ by letting $f(\alpha) = (\lambda, (i))$ if at stage $\alpha + 1$ we were in Case (i) and used $\lambda$ as the minimal point of the intervals (and $f(\alpha) = (\lambda, (ii))$ if at stage $\alpha + 1$ we were in Case (ii) and used $\lambda$ as the maximal point of the intervals).  Since $\omega_1$ is uncountable and the set $\Lambda \times \{(i), (ii)\}$ is countable, there is an uncountable subset $Y \subseteq \omega_1$ on which the restriction $f \upharpoonright Y$ is constant.

If, say, $f$ is constantly $(\lambda, (i))$ on $Y$ then we let $I_{\alpha}$ be the unique interval in $\Lambda$ with $\lambda = \min I_{\alpha}$ and $W \upharpoonright_p I_{\alpha} \equiv W_{\alpha}$.  By construction we have $I_{\alpha}$ strictly includes into $I_{\alpha'}$ for elements $\alpha < \alpha'$ in $Y$.  Select $\lambda_{\alpha} \in I_{\min\{\alpha' \in Y \mid \alpha' > \alpha\}} \setminus I_{\alpha}$ and note that this gives an injection $\alpha \mapsto \lambda_{\alpha}$ from the uncountable set $Y$ to the countable set $\Lambda$, a contradiction.  In case $f$ is constantly $(\lambda, (ii))$ on $Y$ then the argument is similar.  Thus we know that Step (1) has terminated at, say, $C_{\alpha}$ where $\alpha \in \omega_1$.

\noindent\textbf{Step 2.}

Now we have a standardized coherent collection $C_{\alpha} \supseteq \{\coi(W_x, \iota_x, U_x)\}_{x \in X}$ such that $|C_{\alpha}| < 2^{\aleph_0}$ and each $\lambda \in \Lambda$ is contained in a maximal interval $I_{\lambda} \subseteq \Lambda$ such that $W \upharpoonright_p I_{\lambda} \in \Pfine(C_{\alpha})$.  Taking $\mathcal{I} = \{I_{\lambda}\}_{\lambda \in \Lambda}$ to be this collection of such maximal intervals it is clear that if $I_{\lambda_0} \cap I_{\lambda_1} \neq \emptyset$ then $I_{\lambda_0} = I_{\lambda_1}$, so the collection $\mathcal{I}$ gives a decomposition of $\Lambda$ into nonempty pairwise disjoint intervals.  The linear order on $\Lambda$ descends naturally to a linear order $\prec$ on $\mathcal{I}$ via comparison of elements.

If $\mathcal{I}$ has exactly one element $\mathcal{I} = \{I\}$ then $I = \Lambda$ and so $W = W \upharpoonright_p I \in \Pfine(h(C_{\alpha}))$.  Thus by Lemma \ref{findsomerepresentative} (1) we find $U \in \Red_c$ and coi $\iota$ from $W$ to $U$ such that $C_{\alpha} \cup \{\coi(W, \iota, U)\}$ is coherent, so $\{\coi(W_x, \iota_x, U_x)\}_{x \in X} \cup \{\coi(W, \iota, U)\}$ is coherent.  If instead $\mathcal{I}$ has at least two elements, then we point out that the order $\prec$ on $\mathcal{I}$ is dense (for, if $I'$ is the immediate successor of $I$ under $\prec$ then $W \upharpoonright_p (I \cup I') \equiv (W \upharpoonright_p I)(W \upharpoonright_p I') \in \Pfine(h(C_{\alpha}))$ contrary to the maximality of $I$ and $I'$).  Let $\mathcal{I}'$ be the set $\mathcal{I}$ minus its maximal and/or minimal element (if such exist).  Now $\mathcal{I}'$ is order isomorphic to $\mathbb{Q}$, and we apply Lemma \ref{typeQ} (1) to obtain a word $U' \in \Red_c$ and coi $\iota'$ from $W \upharpoonright_p \bigcup \mathcal{I}'$ to $U'$ so that the collection $C_{\alpha} \cup \{\coi(W \upharpoonright_p \bigcup \mathcal{I}', \iota', U' )\}$ is coherent.  Now $$W \in \Pfine(h(C_{\alpha} \cup \{\coi(W \upharpoonright_p \bigcup \mathcal{I}', \iota', U' )\}))$$ since $W$ is a concatention of at most three elements in $\Pfine(h(C_{\alpha} \cup \{\coi(W \upharpoonright_p \bigcup \mathcal{I}', \iota', U' )\}))$.  Apply Lemma \ref{findsomerepresentative} (1) to find $U \in \Red_c$ and coi $\iota$ from $W$ to $U$ such that $$C_{\alpha} \cup \{\coi(W \upharpoonright_p \bigcup \mathcal{I}', \iota', U' )\} \cup \{\coi(W, \iota, U)\}$$ is coherent.  Thus $\{\coi(W_x, \iota_x, U_x)\} \cup \{\coi(W, \iota, U)\}$ is coherent.  This completes the proof of (1).

The proof of (2) follows the same steps under the obvious changes.
\end{proof}

\begin{proof}[Proof of Theorem \ref{themain}]
We know that $|\Red_{a, b}| = |\Red_c| = 2^{\aleph_0}$.  Let $\mathfrak{c}$ denote the smallest ordinal of cardinality $2^{\aleph_0}$.  As is well-known, each ordinal $\alpha$ can be expressed uniquely as a sum $\alpha = \gamma + n$ where $\gamma$ is a limit ordinal (possibly $\gamma = 0$) and $n$ is a natural number.  So, an ordinal may be said to be even or odd depending on the natural number $n$.  Let $\mathbb{E} = \{\alpha < \mathfrak{c} \mid \alpha \text{ is even}\}$ and $\mathbb{O} = \mathfrak{c} \setminus \mathbb{E}$.  Let $\Red_c = \{U_{\alpha}\}_{\alpha \in \mathbb{O}}$ be an enumeration such that for $n \in \mathbb{O} \cap \omega$ we have $U_n \equiv c_{\frac{n - 1}{2}}$.  Let $\Red_{a, b} = \{W_{\alpha}\}_{\alpha \in \mathbb{E}}$ be an enumeration such that for $n \in \mathbb{E} \cap \omega$ we have

\[
W_n = \left\{
\begin{array}{ll}
a_{\frac{n}{4}}
                                            & \text{if } n \in 4\omega, \\
b_{\frac{n - 2}{4}}                                        & \text{if } n \in 2\omega \setminus 4\omega.
\end{array}
\right.
\]

\noindent For $n \in \mathbb{O} \cap \omega$ let $W_n \equiv E$ and $\iota_n$ be the empty function, and similarly for $n \in \mathbb{E} \cap \omega$ let $U_n \equiv E$ and $\iota_n$ be the empty function.  Now the collection $\{\coi(W_n, \iota_n, U_n)\}_{n \in \omega}$ is coherent and standardized.

Now we continue the construction of our coherent collection by induction.  Suppose $\omega \leq \alpha < \mathfrak{c}$ and $\{\coi(W_{\beta}, \iota_{\beta}, U_{\beta})\}_{\beta < \alpha}$ is a coherent standardized collection.  If $\alpha \in \mathbb{E}$ then by Lemma \ref{arbextension} (1) select $U_{\alpha} \in \Red_c$ and coi $\iota_{\alpha}$ from $W_{\alpha}$ to $U_{\alpha}$ such that the collection $\{\coi(W_{\beta}, \iota_{\beta}, U_{\beta})\}_{\beta \leq \alpha}$ is coherent.  If $\alpha \in \mathbb{O}$ then by Lemma \ref{arbextension} (2) select $W_{\alpha} \in \Red_{a, b}$ and coi $\iota_{\alpha}$ from $W_{\alpha}$ to $U_{\alpha}$ such that the collection $\{\coi(W_{\beta}, \iota_{\beta}, U_{\beta})\}_{\beta \leq \alpha}$ is coherent.  In the end we have constructed a coherent collection $\{\coi(W_{\beta}, \iota_{\beta}, U_{\beta})\}_{\beta < \mathfrak{c}}$ such that $\Pfine(\{W_{\beta}\}_{\beta < \mathfrak{c}}) = \Red_{a, b}$ and $\Pfine(\{U_{\beta}\}_{\beta < \mathfrak{c}}) = \Red_c$.  By Lemma \ref{homomorphism} we see that $\Red_{a, b}/\langle\langle \Pure_{a, b}\rangle\rangle$ is isomorphic to $\Red_c/\langle\langle \Pure_c \rangle\rangle$.  Theorem \ref{themain} is immediate from Lemmas \ref{GScombinatorial} and \ref{HAcombinatorial}.

\end{proof}

\end{section}

\section*{Acknowledgement}

The author thanks Jeremy Brazas for the beautiful pictures used in this article.

\end{document}